\definecolor{darkred}{RGB}{100,0,0}
\definecolor{darkgreen}{RGB}{0,100,0}
\definecolor{darkblue}{RGB}{0,0,150}
\newtheorem{prp}{Proposition}
\newtheorem{lem}{Lemma}
\def\beq{\begin{equation}}
\def\eeq{\end{equation}}
\def\beqn{\begin{eqnarray*}}
\def\eeqn{\end{eqnarray*}}
\def\bitem{\begin{itemize}}
\def\eitem{\end{itemize}}
\def\benum{\begin{enumerate}}
\def\eenum{\end{enumerate}}
\def\bmult{\begin{multline*}}
\def\emult{\end{multline*}}
\def\bcenter{\begin{center}}
\def\ecenter{\end{center}}
\newcommand{\prpref}[1]{Proposition~\ref{prp:#1}}
\newcommand{\lemref}[1]{Lemma~\ref{lem:#1}}
\newcommand{\secref}[1]{Section~\ref{sec:#1}}
\newcommand{\figref}[1]{Figure~\ref{fig:#1}}
\DeclareMathOperator*{\argmax}{arg\, max}
\def\cH{\mathcal{H}}
\def\cN{\mathcal{N}}
\def\cY{\mathcal{Y}}
\def\bY{\mathbf{Y}}
\def\by{\mathbf{y}}
\def\bbR{\mathbb{R}}
\newcommand{\E}{\operatorname{\mathbb{E}}}
\renewcommand{\P}{\operatorname{\mathbb{P}}}
\newcommand{\Var}{\operatorname{Var}}
\newcommand{\Cov}{\operatorname{Cov}}
\newcommand{\pr}[1]{\mathbb{P}\left(#1\right)}
\def\iid{\stackrel{\rm iid}{\sim}}
\def\Bin{\text{Bin}}
\def\Geom{\text{Geom}}
\def\eps{\varepsilon}
\DeclareMathOperator{\sign}{sign}
\newcommand{\IND}[1]{\mathbbm{1}_{\{ #1 \}}}
\def\Smir{S^\star}
\begin{document}

\title{Distribution-Free Tests for Sparse Heterogeneous Mixtures}
\author{ 
Ery Arias-Castro %\footnote{Web: \eacurl} \
and 
Meng Wang%\footnote{Email: \href{mailto:mew023@ucsd.edu}{mew023@ucsd.edu}}
}
\date{Department of Mathematics, University of California, San Diego}
\maketitle

\begin{abstract}
We consider the problem of detecting sparse heterogeneous mixtures from a nonparametric perspective, and develop distribution-free tests when all effects have the same sign.  
Specifically, we assume that the null distribution is symmetric about zero, while the  true effects have positive median.  
We evaluate the precise performance of classical tests for the median (t-test, sign test)  and classical tests for symmetry (signed-rank, Smirnov, total number of runs, longest run tests) showing that none of them is asymptotically optimal for the normal mixture model in all sparsity regimes.
We then suggest two new tests.
The main one is a form of Higher Criticism, or Anderson-Darling, test for symmetry.
It is shown to be asymptotically optimal for the normal mixture model, and other generalized Gaussian mixture models, in all sparsity regimes.
Our numerical experiments confirm our theoretical findings.

\medskip

\noindent {\bf MSC 2010:} 62G10, 62G32, 62G20.

\noindent {\bf Keywords:} mixture detection, distribution-free tests, higher criticism, Anderson-Darling test, sign test, signed-rank test, run tests, cumulative sum tests.

\end{abstract}

\section{Introduction}
\label{sec:intro}

Detecting heterogeneity in data has been an emblematic problem in statistics for decades.  We consider the following stylized variant.  We observe a sample $X_1, \dots, X_n \in \bbR$, and want to test 
\begin{align}
H_0^n : \quad & X_1, \dots, X_n \ \iid \ F(x); \label{h0} \\
H_1^n : \quad & X_1, \dots, X_n \ \iid \ (1-\eps_n) F(x) + \eps_n G(x - \mu_n). \label{h1} 
\end{align}
$F$ is the null distribution, $G$ is the non-null effects distribution, and $\eps_n \in (0,1]$ and $\mu_n > 0$ are the fraction and magnitude of the non-null (here positive) effects.

This testing problem could model a clinical trial where each one of $n$ subjects is given one of two treatments, $A$ or $B$, say for high-blood pressure, for a period of time, and then given the other treatment for another period of time.  In that setting, $X_i$ would be the decrease in blood pressure in subject $i$ under treatment $A$ minus that under treatment $B$.  The model above would be appropriate if treatment $A$ is expected to be at least as effective as treatment $B$, and strictly more effective  in a (possibly small) fraction of the subjects.  The model may also be relevant in a multiple testing situation where the $i$th test rejects for large values of the statistic $X_i$.  For example, in a gene expression experiment comparing a treatment and control group, a test statistic is computed for each gene; typically, the fraction of genes that are differentially expressed --- which corresponds to non-null effects --- is presumed to be small.

When the model $(F,G,\eps_n,\mu_n)$ is fully known, the likelihood ratio test (LRT) is the most powerful test.  Our goal is to devise adaptive, distribution-free tests\footnote{In our context, a test is distribution-free (aka nonparametric) if its level does not depend on the null distribution $F$, as long as $F$ is continuous and symmetric about zero.} that can compete with the LRT without knowledge of the model specifics.  For this to be possible, we assume that $F$ is symmetric about zero.  Our standing assumptions are:
\renewcommand{\theenumi}{(A\arabic{enumi})}
\renewcommand{\labelenumi}{\theenumi}
\begin{enumerate}
\item \label{a1} $F$ is continuous and symmetric about zero (i.e., $F(-x) = 1-F(x)$ for all $x\in\bbR$), while $G$ is continuous and has zero median.
\end{enumerate}

We emphasize that we do not consider the null and alternative hypotheses as composite hypotheses.  
A minimax approach in that direction may require more restrictions on $F$ or $G$, and would tend to focus the problem on particularly difficult distributions to test.  

We study the testing problem \eqref{h0}-\eqref{h1} in an asymptotic setting where $n \to \infty$.  (All the limits that appear in the paper are as $n \to \infty$, unless otherwise specified.)
We focus on the situation where the fraction of positive effects $\eps_n \to 0$, distinguishing between two main asymptotic regimes:
\beq \label{regimes}
\sqrt{n} \eps_n \to \begin{cases}
\infty & \quad \text{(dense regime)} ; \\
0 & \quad \text{(sparse regime)} .
\end{cases}
\eeq

We say that a test based on a statistic $S$ is asymptotically powerful (resp.~powerless) if the total variation distance between the distribution of $S$ under the null \eqref{h0} and under the alternative \eqref{h1} tends to 2 (resp.~0) when $n \to \infty$.  (Using this terminology, we avoid specifying complicated critical values.)

\subsection{A benchmark: the generalized Gaussian mixture model} \label{sec:gg}
The normal location model is often a benchmark for assessing the power loss for using distribution-free tests about the median.  For example, the asymptotic relative efficiencies of the sign and signed-rank tests relative to the $t$-test under normality are well-known in the setting where $\eps_n = 1$ under the alternative \citep{TSH}.  Here, we evaluate the performance of a distribution-free test in a richer family of models, where $F=G$ is generalized Gaussian with parameter $\gamma > 0$, defined by its density 
\beq \label{gg-f}
f(x) \propto \exp\left(- \frac{|x|^\gamma}\gamma \right), \quad x \in \bbR.
\eeq
Note that the normal distribution corresponds to $\gamma = 2$ and the double-side exponential distribution to $\gamma=1$.

Continuing the work of \cite{MR1456646}, who characterized the behavior of the likelihood ratio in the normal mixture model where $\gamma = 2$, \cite{dj04} derived the detection boundary in the generalized Gaussian mixture model.
They parameterized $\eps_n$ as 
\beq \label{eps}
\eps_n = n^{-\beta}, \quad \text{ with } \quad 0 < \beta < 1 \quad \text{(fixed)}.
\eeq  
The dense regime corresponds to $\beta < 1/2$, while the sparse regime corresponds to $\beta > 1/2$.  \cite{dj04} focused on the sparse regime, and parameterized $\mu_n$ as 
\beq \label{mu-gamma}
\mu_n = \left( \gamma r \log n  \right)^{1/\gamma}, \quad \text{ where } 0 < r <1 \quad \text{(fixed)}.
\eeq
%They found the following.

When $\gamma > 1$, define 
\beq \label{gamma>1}
\rho_\gamma^*(\beta) = \begin{cases} 
(2^{1/(\gamma -1)} - 1)^{\gamma -1} (\beta - 1/2), & 1/2 < \beta < 1 - 2^{-\gamma/(\gamma -1)}, \\ 
(1 - (1-\beta)^{1/\gamma})^{\gamma}, & 1 - 2^{-\gamma/(\gamma -1)} < \beta < 1. 
\end{cases}
\eeq
And for $\gamma \le 1$, define
\beq \label{gamma<1}
\rho_\gamma^*(\beta) = 2 (\beta - 1/2).
\eeq
Then the curve $r = \rho_\gamma^*(\beta)$ in the $(\beta,r)$ plane is the detection boundary for this testing problem, in the sense that the LRT is asymptotically powerful (resp.~powerless) when $r > \rho_\gamma^*(\beta)$ (resp.~$<$).  If $\gamma > 1$, We call moderately sparse the regime where $1/2 < \beta < 1 - 2^{-\gamma/(\gamma -1)}$ and very sparse the regime where $1 - 2^{-\gamma/(\gamma -1)} < \beta < 1$.
See \figref{boundary} for an illustration.

\begin{figure}[h!]
\centering
\includegraphics[scale=.3]{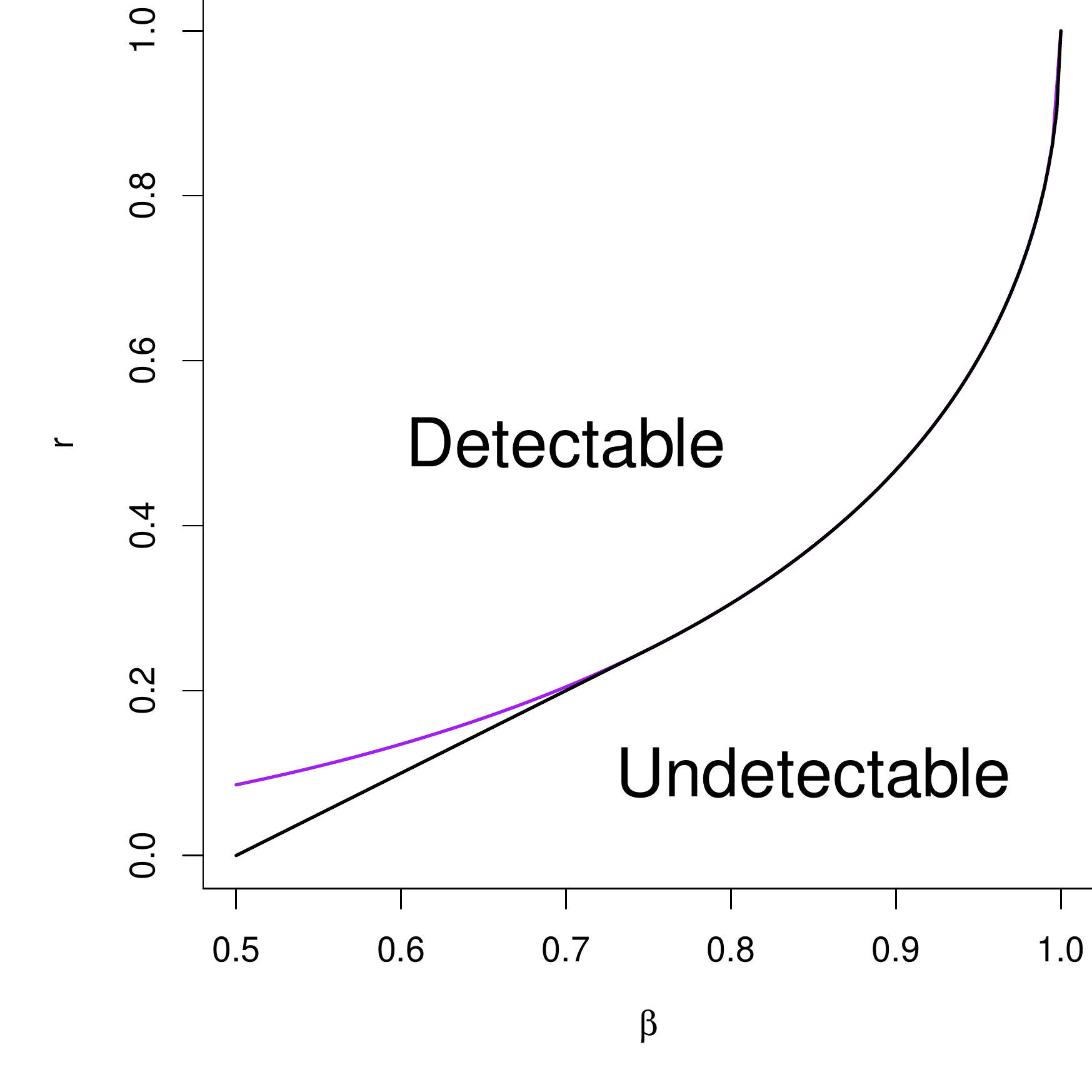}
\includegraphics[scale=.3]{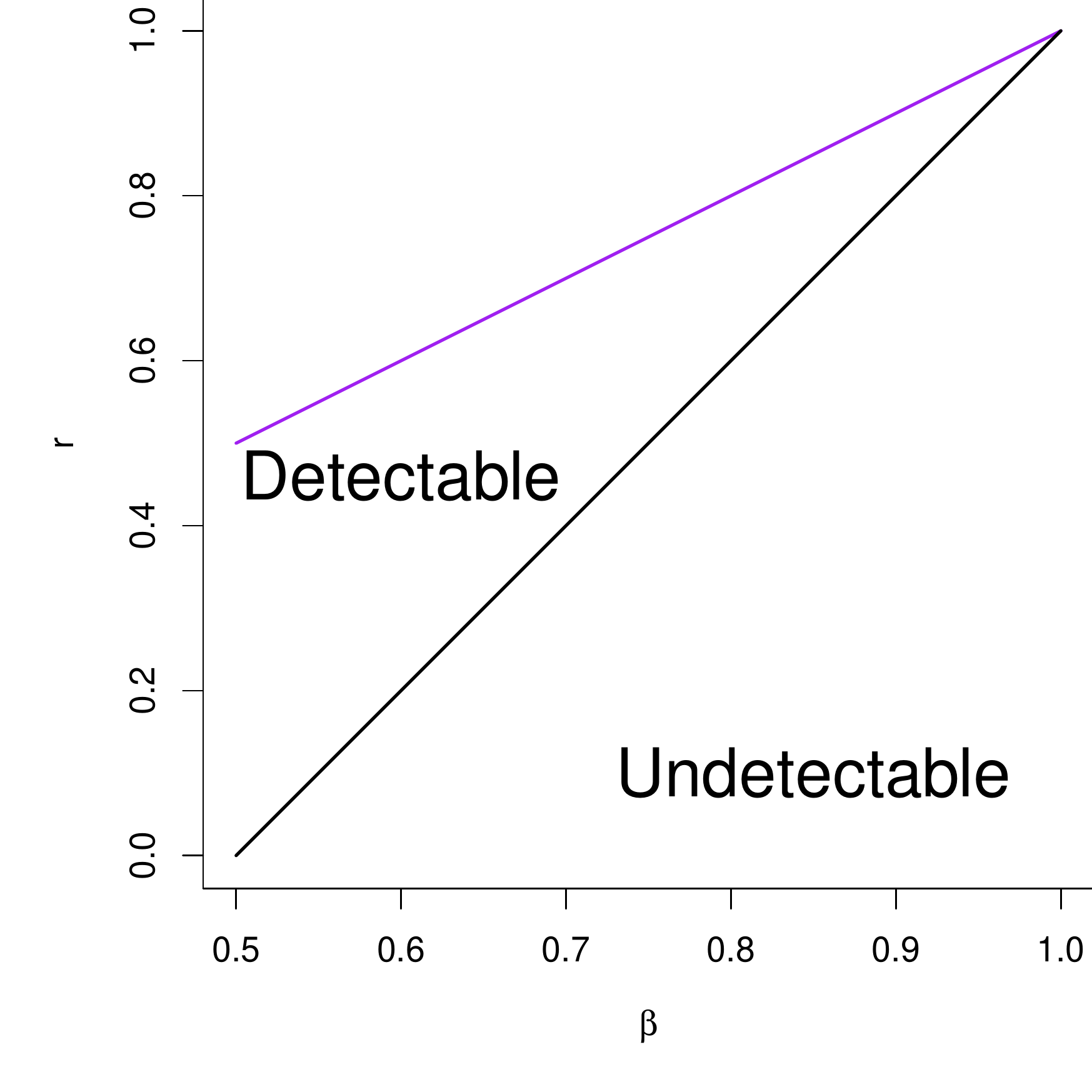}
\includegraphics[scale=.3]{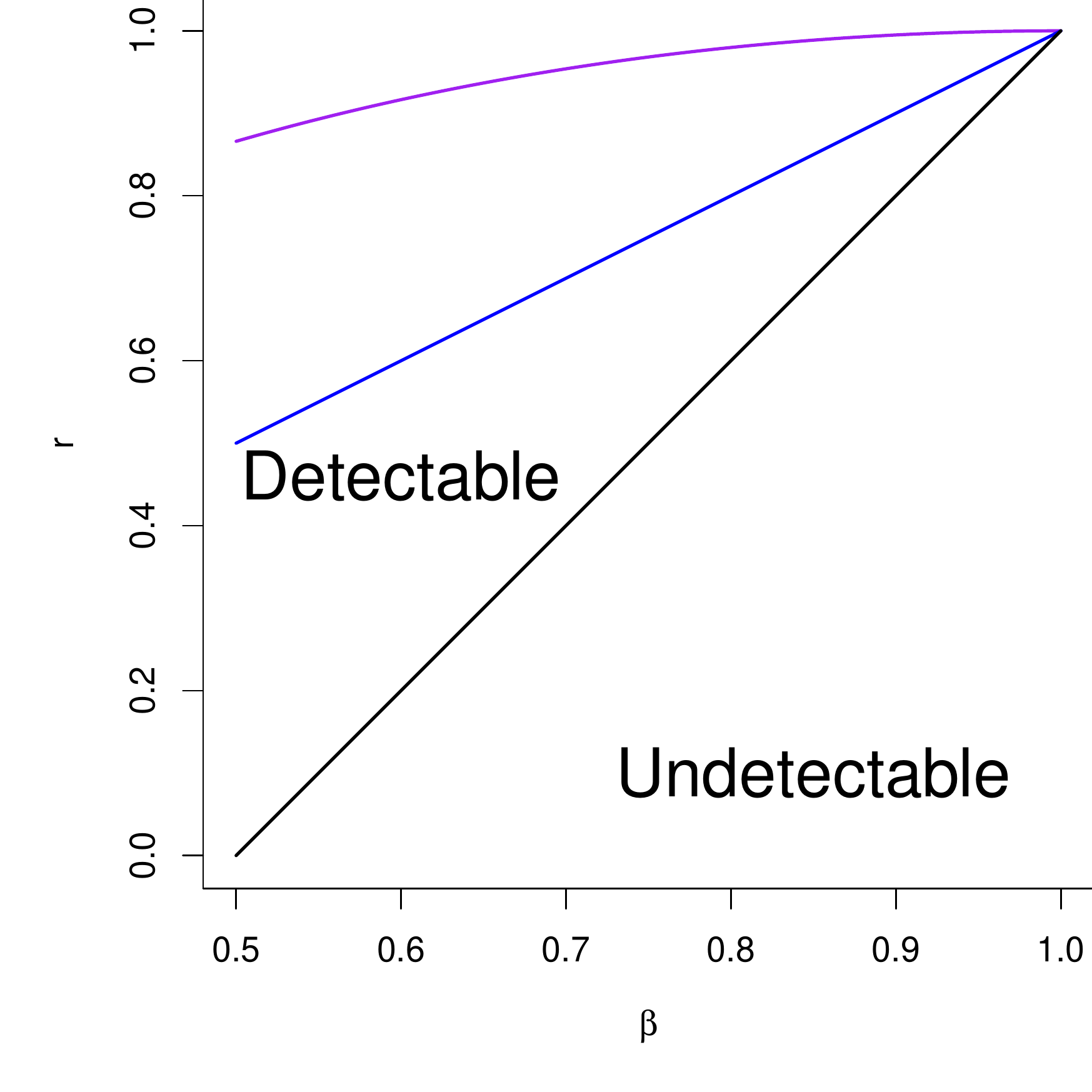}
\caption{In black is the detection boundary for the generalized Gaussian mixture model with parameter $\gamma \in \{ 2, 1, 0.5 \}$ (from left to right).  This detection boundary is attained by the CUSUM sign test.  In purple is the detection boundary for the tail-run test, and in blue is the detection boundary for the longest-run test; these coincide  when $\gamma \ge 1$.}
\label{fig:boundary}
\end{figure}

Following standard arguments, we extend these results to the dense regime, which we did not find elsewhere in the literature, except for the normal model \citep{cai2010optimal}.  

\begin{prp} \label{prp:gg}
Assume that $0 < \beta < 1/2$ and $\mu_n = n^{s-1/2}$ where $s \in (0, 1/2)$.   Then the hypotheses merge asymptotically when $\gamma \ge 1/2$ and $s < \beta$, or when $\gamma < 1/2$ and $s < \frac12 - \frac{1 - 2\beta}{1+ 2\gamma}.$
\end{prp}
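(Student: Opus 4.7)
The approach is the second-moment (or $\chi^2$) method, which is the standard device for showing two hypotheses merge. Let $P_0$ have density $f$ (the generalized Gaussian with parameter $\gamma$) and $P_1$ have density $f_1 = (1-\eps_n) f + \eps_n f(\cdot - \mu_n)$, so $H_0^n$ and $H_1^n$ are the corresponding $n$-fold products. Since $\text{TV}(P_0^n, P_1^n)^2 \le \tfrac14 \chi^2(P_1^n, P_0^n)$ and the $\chi^2$-divergence tensorizes as $1 + \chi^2(P_1^n, P_0^n) = (1 + \chi^2(P_1, P_0))^n$, it is enough to prove that $n\, \chi^2(P_1, P_0) \to 0$. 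A direct expansion of $f_1^2/f$ yields
\[
\chi^2(P_1, P_0) = \eps_n^2 \, I(\mu_n), \qquad I(\mu) := \int_{\bbR} \frac{(f(x-\mu) - f(x))^2}{f(x)}\, dx,
\]
so the goal becomes $n^{1-2\beta}\, I(n^{s-1/2}) \to 0$.

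The heart of the proof is estimating $I(\mu)$ as $\mu \to 0^+$. A formal first-order Taylor expansion of $f(x-\mu)$ suggests $I(\mu) \sim \mu^2 \mathcal{I}_\gamma$, where the Fisher information $\mathcal{I}_\gamma = \int (f'/f)^2 f\, dx = \int |x|^{2\gamma-2} f(x)\, dx$ is finite exactly when $\gamma > 1/2$. For $\gamma > 1/2$ I would make this rigorous by dominated convergence (writing $f(x-\mu) - f(x) = -\int_0^\mu f'(x-s)\, ds$ and applying Cauchy--Schwarz), getting $I(\mu) \sim \mathcal{I}_\gamma \mu^2$. At the boundary $\gamma = 1/2$ the same argument, after truncating the divergent integral at $|x| \asymp \mu$, yields the logarithmic correction $I(\mu) = O(\mu^2 \log(1/\mu))$. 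In both cases $n\chi^2 = O(n^{2(s-\beta)} \log n)$, which tends to $0$ whenever $s < \beta$.

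For $\gamma < 1/2$ the Fisher information diverges near the origin, so the Taylor argument must be replaced by a scale-sensitive split at $|x| \asymp \mu$. On the outer region $|x| \gtrsim \mu$, first-order expansion bounds the integrand by $\lesssim \mu^2 |x|^{2\gamma-2} f(x)$, which integrates to $O(\mu^2 \cdot \mu^{2\gamma-1}) = O(\mu^{2\gamma+1})$. On the inner region $|x| \lesssim \mu$, the ratio satisfies $|f(x-\mu)/f(x) - 1| \lesssim \mu^\gamma$ (since $|x-\mu|^\gamma - |x|^\gamma = O(\mu^\gamma)$ there), and since the region has Lebesgue measure $O(\mu)$ the contribution is again $O(\mu^{2\gamma+1})$. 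The cleanest way to confirm the rate is the substitution $x = \mu u$, which rewrites $I(\mu)$ as $\mu^{2\gamma+1}$ times an integral that converges to a positive constant as $\mu \to 0$. Hence $I(\mu) \asymp \mu^{2\gamma+1}$, giving $n\chi^2 \asymp n^{1 - 2\beta + (2\gamma+1)(s-1/2)}$, which vanishes precisely when $s < \tfrac12 - (1-2\beta)/(1+2\gamma)$.

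The main obstacle is the sharp analysis of $I(\mu)$ when $\gamma < 1/2$: one must match the inner and outer contributions, verify that the heavy-tail region contributes only lower-order terms (which follows from the superpolynomial decay $f(x) = O(\exp(-|x|^\gamma/\gamma))$), and keep the exponent of $n$ tight. Once $I(\mu) \asymp \mu^{2\gamma+1}$ is established, plugging in $\eps_n = n^{-\beta}$ and $\mu_n = n^{s-1/2}$ is mechanical, and the two cases combine to give exactly the thresholds stated.
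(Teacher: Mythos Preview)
Your proposal is correct and takes essentially the same approach as the paper. The paper phrases the second-moment argument via a truncated-likelihood lemma applied with $x_n=\infty$ (so no actual truncation), which is exactly your $\chi^2$ bound, and it estimates the same quantity $I(\mu_n)=\int f(x-\mu_n)^2/f(x)\,dx-1$ by splitting at $|x|=2\mu_n$; your Fisher-information presentation for $\gamma>1/2$ and scaling argument for $\gamma<1/2$ are just alternative packagings of the same split, yielding the identical rates $\mu_n^2$, $\mu_n^2\log(1/\mu_n)$, and $\mu_n^{2\gamma+1}$.
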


\subsection{The higher criticism}
Assuming $F$ known, \cite{dj04} proposed a procedure, called higher criticism, which does not require knowledge of $G,\eps_n, \mu_n$, and is much simpler than the discretized generalized LRT proposed in \citep{ingster2002a,ingster2002b}, which still requires knowledge of $G$.  Among other things, they showed that, for any $\gamma > 0$, the higher criticism is asymptotically optimal (i.e., achieves the detection boundary) in the sense that it is asymptotically powerful when  $r > \rho_\gamma^*(\beta)$.  
Inspired by an idea of John Tukey for multiple testing, this test is based on the normalized empirical process of the $X_i$, and as such, is a special case of the goodness-of-fit test proposed by \cite{MR0050238}.  Specifically, the test rejects for large values of 
\beq \label{ad}
%{\rm HC}^+_n := 
\sup_{x \in \bbR} \, \frac{F(x) - F_n(x)}{\sqrt{F(x) (1 - F(x))}},
\eeq
where $F_n(x) = \frac1n \sum_{i=1}^n \IND{X_i \le x}$ is the empirical distribution function.

Similar results have been obtained for other mixture models (e.g., chi-squared) in \citep{dj04,JinPhD}; for a normal model where $F$ is standard normal and $G$ is normal with unknown variance \citep{cai2010optimal}; and also under dependence \citep{hj08,hj09}.
More recently, \cite{cai-12} consider the detection problem in greater generality, but still assume that the null distribution is known.  Focusing on the sparse regime where $1/2 < \beta < 1$, they derive a detection boundary by characterizing the sharp asymptotics of the Hellinger distance between the null and the alternative, and then show that the higher criticism achieves the detection boundary, without knowledge of $G,\eps_n,\mu_n$.
We also mention the work of \cite{jager}, who propose a goodness-of-fit testing approach based on $\phi$-divergences that includes the higher criticism as a special case.  We stress the fact that all these works assume that the null distribution $F$ is known.  

In the context of testing in a linear regression model with Gaussian noise, \cite{ingster2010detection} and \cite{anova-hc} discuss the case where the noise variance is unknown, corresponding here to a situation where $F$ is known to be in a parametric family.  

As far as we know, the only other publication that considers a nonparametric setting is \citep{MR2815777}, where the $X_i$'s are $t$-statistics, for example obtained from the comparison of two samples as in some gene expression analysis; there, conditions are derived under which the higher criticism is asymptotically powerful as the degree of freedom of the $t$-statistics tends to infinity.

\subsection{A new testing procedure: the CUSUM sign test}
Since $F$ is assumed to be symmetric \ref{a1}, it makes sense to test for symmetry.  We prove in this paper that none of the classical procedures are completely satisfactory in that, in the context of the normal model (for example), they do not achieve the same asymptotic performance of the LRT in all regimes, and none of them achieves the detection boundary in the moderately sparse regime.      
We propose a new test for symmetry that is satisfactory in that sense.

To better explain the rationale behind our testing procedure, we draw a parallel with the higher criticism.  In the normal model, the Kolmogorov-Smirnov test is suboptimal in the sparse regimes.  This is because the deviations of the corresponding statistic \beq \label{ks}
%{\rm KS}(F,F_n) := 
\sup_{x \in \bbR} \, [ F(x) - F_n(x) ],
\eeq
are dominated by what happens near the median of the distribution, since $\Var(F(x) - F_n(x)) = \frac1n F(x) (1- F(x))$.  Compare with the higher criticism (aka, Anderson-Darling) statistic \eqref{ad}, where each statistic in the supremum has unit variance.

The Kolmogorov-Smirnov test has an analogous test for symmetry, called the Smirnov test, based on
\beq \label{smir} 
\Smir := \sup_{x \ge 0} \, [1 - F_n(x) - F_n(-x)].
\eeq
It can be seen as comparing the positive and negative parts of the sample; or as comparing $F_n$ with its symmetrization $\frac12 \big(F_n(x) + 1 - F_n(-x)\big)$. 
The Smirnov statistic may be expressed as
\beq \label{smv}
\Smir = \max_{k = 1, \dots, n} S_k, \quad \text{where} \quad S_k:= \sum_{i=1}^k \xi_{(i)},
\eeq
in terms of the sign sequence 
\beq \label{sign-seq}
\xi_{(1)}, \dots, \xi_{(n)}, \quad \text{where} \quad \xi_{(i)} = \sign(X_{(i)}) \in \{-1,1\},
\eeq
where $|X_{(1)}| > \cdots > |X_{(n)}|$ are the observations sorted in decreasing order according their absolute value.  This sign sequence is i.i.d.~Rademacher under the null.

Our {\em cumulative sum (CUSUM) sign test} is to the Smirnov test for symmetry what the Anderson-Darling test is to the Kolmogorov-Smirnov test.  It is based on
\beq \label{cusum}
M := \max_{k = 1, \dots, n} \frac{S_k}{\sqrt{k}}.
\eeq
And indeed, under the null, $\Var(S_k/\sqrt{k}) = 1$ for all $k=1,\dots,n$.

{\bf Main result.}
A consequence of \prpref{cusum} is that the CUSUM sign test achieves the detection boundary for the generalized Gaussian mixture model (described in \secref{gg}) --- except (perhaps) in the dense regime when $\gamma < 1/2$.  (We do not know whether the higher criticism does better.)

\subsection{Content}
The remaining of the paper is organized as follows.  

%In \secref{lower}, we describe the results found in \citep{dj04,JinPhD} for generalized Gaussian mixture models in the sparse regime, provide a new result for the dense regime, and another result for the dense regime, applicable to other mixture models, which brings into focus the results we obtain later in the paper. 

In \secref{own}, we analyze the CUSUM sign test, and another new test, that we named the {\em tail-run test}, based on the first run of 1's in the sign sequence.
We will show that the tail-run test is asymptotically optimal in the very sparse regime of the generalized Gaussian model.  In fact, in our numerical experiments, the tail-run test outperforms the CUSUM sign test in that regime. 

In \secref{classical}, we analyze classical tests.  In \secref{median}, we analyze the $t$-test\footnote{When $F$ has finite variance, the $t$-test is asymptotically distribution-free as a consequence of the Central Limit Theorem.} and the sign test, considered as tests for the median.   
%These are relevant when the median is shifted towards the right under the alternative, which is the case in model \eqref{h1} under mild assumptions on $G$.
In \secref{sym}, we analyze some emblematic tests for symmetry: the Wilcoxon signed-rank test, the Smirnov test, the number-of-runs test and the longest-run test.  
The study of these classical tests in the context of our testing problem \eqref{h0}-\eqref{h1} is novel as far as we know.
We find that the $t$-test, the sign test, the signed-rank test and the Smirnov test are all asymptotically optimal in the dense regime of the generalized Gaussian model, but grossly suboptimal in the sparse regime.  We also find that the number-of-runs test is grossly suboptimal in all regimes, while the longest-run test is optimal for the generalized Gaussian model in the very sparse regime, but grossly suboptimal otherwise.   

%We display the performance of each test in Table~\ref{tab:summary} in the model where $F=G$ is generalized Gaussian with parameter $\gamma$.
%
%\begin{table}
%\caption{Detection performance for the various distribution-free tests considered in the paper, in the model where $F=G$ is generalized Gaussian with parameter $\gamma > 0$.}
%\label{tab:summary}
%\centering
%\medskip
%\def\arraystretch{1.5}
%\begin{tabular}{ c ||c | c }
% & \multicolumn{2}{$\gamma \le 1$} & \multicolumn{3}{$\gamma \le 1$} 
%\hline 
%\end{tabular}
%
%\end{table}

In \secref{numerics}, we perform numerical simulations to accompany our theoretical findings.  We focus on the generalized Gaussian mixture model.  

\secref{discussion} is a short discussion section, where we contrasts our testing problem where all effects are positive to the analogous testing problem where the effects can be negative or positive in the same experiment.

Appendix~\ref{sec:lower} contains the proof of \prpref{gg}, information bounds for the case where $F \ne G$ are both generalized Gaussian, and a more general information-theoretic lower bound for the dense regime.  Appendix~\ref{sec:perf} contains the proofs of all the performance bounds for all the tests considered in the paper.

\subsection{Notation} \label{sec:notation}
For $a,b \in \bbR$, let $a \wedge b = \min(a,b)$ and $a \vee b = \max(a,b)$.
For $x \in \bbR$, $x_+ = x \vee 0$ is the positive part.
For two sequences of reals $(a_n)$ and $(b_n)$: $a_n \sim b_n$ when $a_n/b_n \to 1$; $a_n = o(b_n)$ when $a_n/b_n \to 0$; $a_n = O(b_n)$ when $a_n/b_n$ is bounded; $a_n \asymp b_n$ when $a_n = O(b_n)$ and $b_n = O(a_n)$; $a_n \ll b_n$ when $a_n = o(b_n)$.  Finally, $a_n \approx b_n$ when $|a_n/b_n| \vee |b_n/a_n| = O(\log n)^w$ for some $w \in \bbR$.

We use similar notation with a superscript $P$ when the sequences $(a_n)$ and $(b_n)$ are random.  For instance, $a_n = O_P(b_n)$ means that $a_n/b_n$ is bounded in probability, i.e., $\sup_{n} \P(|a_n/b_n| > x) \to 0$ as $x \to \infty$.

When $X$ and $Y$ are random variables, $X \sim Y$ means they have the same distribution.  
For a random variable $X$ and distribution $F$, $X \sim F$ means that $X$ has distribution $F$.
For a sequence of random variables $(X_n)$ and a distribution $F$, $X_n \rightharpoonup F$ means that $X_n$ converges in distribution to $F$.
Everywhere, we identify a distribution and its cumulative distribution function.
For a distribution $F$, $\bar{F}(x) = 1 - F(x)$ will denote its survival function.

\section{New nonparametric tests for detecting heterogeneity}
\label{sec:own}

In this section, we study the CUSUM sign test and the tail-run test, respectively based on the statistics defined in \eqref{cusum} and \eqref{tail}.

\subsection{The cumulative sum (CUSUM) sign test}
\label{sec:cusum}

We analyze the CUSUM sign test, which rejects for large values of $M$ defined in \eqref{cusum}.  Under the null, $M \sim_P \sqrt{2\log\log(n)}$ \citep{MR0074712}.

\begin{prp} \label{prp:cusum}
Assuming \ref{a1}, the cumulative sums sign test is asymptotically powerful if either
\beq \label{prp_cusum1}
\sqrt{n} \eps_n [1/2 - G(-\mu_n)] \gg \sqrt{\log\log n};
\eeq
%{\rm or} there is $(x_n)$ such that 
%\beq \label{prp_cusum2}
%n(1-F(x_n)) \to 0, \quad n\eps_nG(-x_n-\mu_n) \to 0, \quad n \eps_n (1 - G(x_n - \mu_n)) \gg \log\log n;
%\eeq
or there is a sequence $(x_n)$ such that
\beq \label{prp_cusum3}
\frac{\sqrt{n} \eps_n [\bar G(x_n - \mu_n) - G(-x_n - \mu_n)]}{\sqrt{\bar F(x_n) + \eps_n [\bar G(x_n - \mu_n) - G(-x_n - \mu_n)]}} \gg \sqrt{\log\log n}.
\eeq
\end{prp}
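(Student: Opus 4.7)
\textbf{Proof plan for \prpref{cusum}.}

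The plan rests on two pillars: (i) a null upper bound for $M$ of order $\sqrt{\log\log n}$, and (ii) two choices of index --- one deterministic, one random --- giving lower bounds on $M$ under $H_1$ that match each of the two displayed conditions.

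Under $H_0$, assumption \ref{a1} makes $F$ continuous and symmetric, so the signs $\xi_{(i)}=\sign(X_{(i)})$ are independent of the ranked magnitudes $|X_{(1)}|>\cdots>|X_{(n)}|$ and form an i.i.d.\ Rademacher sequence. The cited LIL-type result (Darling--Erd\H{o}s for maxima of standardized partial sums) yields $M/\sqrt{2\log\log n}\to_P 1$, so for any $c>\sqrt2$ the rejection region $\{M>c\sqrt{\log\log n}\}$ has null probability tending to $0$. It then suffices to exhibit, under $H_1$, indices $k_n$ (possibly random, with $1\le k_n\le n$) for which $S_{k_n}/\sqrt{k_n}\gg \sqrt{\log\log n}$ in probability, since $M\ge S_{k_n}/\sqrt{k_n}$ always.

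For the first condition \eqref{prp_cusum1}, I will take $k_n=n$, in which case $S_n=\sum_{i=1}^n\sign(X_i)$ is an i.i.d.\ sum. A direct computation under $H_1$ gives $\E\sign(X_1)=2\eps_n[1/2-G(-\mu_n)]$ (using $F(0)=1/2$), so $\E S_n=2n\eps_n[1/2-G(-\mu_n)]$ and $\Var(S_n)\le n$. Chebyshev's inequality immediately gives $S_n/\sqrt{n}=2\sqrt n\eps_n[1/2-G(-\mu_n)](1-o_P(1))$, which is $\gg\sqrt{\log\log n}$ by assumption.

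For the second condition \eqref{prp_cusum3}, I will use the random index $K=K(x_n):=\#\{i:|X_i|>x_n\}$. The key identity is that, in the ordering by decreasing $|X_i|$, the first $K$ indices are exactly those with $|X_i|>x_n$, so $S_K=T:=\sum_i\sign(X_i)\IND{|X_i|>x_n}$. Writing $\Delta_n:=\bar G(x_n-\mu_n)-G(-x_n-\mu_n)$ and $p_n:=P_1(|X|>x_n)$, one computes $\E T=n\eps_n\Delta_n$ and $\E K=np_n$, with $\Var(T)\le np_n$ and $\Var(K)\le np_n$ (each summand being bounded by $1$ in modulus with second moment $p_n$). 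Applying Chebyshev to $T$ and $K$ separately with a slowly growing truncation level, I will conclude
\[
\frac{T}{\sqrt{K}}\ \ge\ \frac{\E T\,(1-o_P(1))}{\sqrt{\E K}\,(1+o_P(1))}\ \asymp\ \frac{\sqrt n\,\eps_n\Delta_n}{\sqrt{p_n}},
\]
valid on the event $\{K\ge1\}$, which has probability tending to $1$ because the hypothesis and $|\Delta_n|\le p_n$ force $\E K\gg\log\log n$.

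The main obstacle is then purely arithmetic: to bridge from $\sqrt{p_n}$ (which comes out of the variance bounds) to the denominator $\sqrt{\bar F(x_n)+\eps_n\Delta_n}$ appearing in \eqref{prp_cusum3}. Expanding $p_n=2(1-\eps_n)\bar F(x_n)+\eps_n[\bar G(x_n-\mu_n)+G(-x_n-\mu_n)]$ and using that $\mu_n>0$ together with the zero-median assumption on $G$ (so that $G(-x_n-\mu_n)\le \bar G(x_n-\mu_n)$ in the relevant range, whence $\bar G(x_n-\mu_n)\le 2\Delta_n$), one obtains $p_n\le 4[\bar F(x_n)+\eps_n\Delta_n]$; this will be the one delicate lemma to verify carefully. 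Once in hand, the ratio lower bound becomes $\gtrsim \sqrt n\,\eps_n\Delta_n/\sqrt{\bar F(x_n)+\eps_n\Delta_n}\gg \sqrt{\log\log n}$, completing the argument.
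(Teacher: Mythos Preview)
Your strategy coincides with the paper's: under $H_0$ invoke the Darling--Erd\H{o}s law to get $M\sim_P\sqrt{2\log\log n}$; for \eqref{prp_cusum1} take $k_n=n$ and apply Chebyshev to $S_n$; for \eqref{prp_cusum3} use the random index $K=\#\{i:|X_i|>x_n\}$ and the identity $S_K=N^+-N^-$ with $N^\pm=\#\{i:\pm X_i>x_n\}$. The paper differs only cosmetically, tracking $N^+,N^-,N$ separately and using the conditional-variance formula $\Var(N^+-N^-)=(2q-1)^2\Var(N)+4q(1-q)\E N\le 2np$ (with $q=p^+/p$), but arrives at the same ratio $(n\eps_n\Delta_n)/\sqrt{np_n}$.

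Where you go further than the paper is in explicitly flagging the arithmetic step that compares $p_n$ to the denominator $\bar F(x_n)+\eps_n\Delta_n$ appearing in \eqref{prp_cusum3}; the paper simply writes ``by \eqref{prp_cusum3}'' at this point. However, your sketched justification is wrong. From $G(-x_n-\mu_n)\le\bar G(x_n-\mu_n)$ (which is just $\Delta_n\ge0$) you \emph{cannot} deduce $\bar G(x_n-\mu_n)\le 2\Delta_n$: the latter is equivalent to the strictly stronger inequality $2\,G(-x_n-\mu_n)\le\bar G(x_n-\mu_n)$, and nothing in \ref{a1} forces this. Indeed, $p_n=2(1-\eps_n)\bar F(x_n)+\eps_n\Delta_n+2\eps_n G(-x_n-\mu_n)$, and if $G$ carries comparable mass in both tails past $x_n\pm\mu_n$ (so $G(-x_n-\mu_n)\approx\bar G(x_n-\mu_n)\approx c>0$ while $\Delta_n\to0$), the extra term $2\eps_n G(-x_n-\mu_n)$ can dominate $\bar F(x_n)+\eps_n\Delta_n$ and your bound $p_n\le 4[\bar F(x_n)+\eps_n\Delta_n]$ fails. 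This is a genuine gap shared by the paper's argument; it is harmless in every application made in the paper (generalized Gaussian $F=G$, where the chosen $x_n$ always gives $G(-x_n-\mu_n)\ll\bar G(x_n-\mu_n)$), but under \ref{a1} alone the bridging lemma as you stated it is false.
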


Condition \eqref{prp_cusum1} is useful in the dense regime, where the CUSUM sign test behaves like the sign test (compare with \eqref{prp_sign}).  In essence, the quantity on the LHS measures (in a standardized scale) how much the positive effects in \eqref{h1} move the median away from 0.
Condition \eqref{prp_cusum3} is useful in the sparse regime, where the quantity on the LHS measures how much of a `bump' in the tail of the mixture distribution \eqref{h1}  the positive effects create.

{\em Generalized Gaussian mixture model.}
We apply \prpref{cusum} when $F=G$ is generalized Gaussian with parameter $\gamma > 0$ as in \secref{gg}.
In the dense regime $\beta < 1/2$, we use the fact that 
\[\sqrt{n} \eps_n [1/2 - F(-\mu_n)] \asymp \sqrt{n} \eps_n \mu_n = n^{- \beta + s} \gg \sqrt{\log \log n}\] 
when $s > \beta$, so that the CUSUM sign test achieves the detection boundary when $\gamma \ge 1/2$.
In the sparse regime $\beta > 1/2$, let $\mu_n = (\gamma r \log n )^{1/\gamma}$ as in \eqref{mu-gamma}. 
Note that, when $x > 0$, $\bar{F}(x) \asymp (1+x)^{1-\gamma} f(x)$, where the density $f$ is defined in \eqref{gg-f}.
We choose $x_n = (\gamma q \log n)^{1/\gamma}$ for some fixed $q \le 1$ chosen later on.  
%In what follows, we use the notation $a_n \approx b_n$ when $a_n \asymp (\log n)^w b_n$ for some $w \in \bbR$.
We then have 
\[\bar{F}(x_n) \approx  n^{-q},\]
\[\bar{F}(x_n-\mu_n) \approx  n^{- (q^{1/\gamma} - r^{1/\gamma})^\gamma},\]
and
\[F(-x_n-\mu_n) = \bar{F}(x_n + \mu_n) \approx  n^{- (q^{1/\gamma} + r^{1/\gamma})^\gamma},\]
where $\approx$ is defined in \secref{notation}. 
%When the term in \eqref{prp_cusum3} is
%\[
%\sqrt{n} \frac{\eps_n \big(\bar{F}(x_n-\mu_n) -F(-x_n - \mu_n)\big)}{\sqrt{n\bar{F}(x_n) + \eps_n \big(\bar{F}(x_n-\mu_n) -F(-x_n - \mu_n)\big)}} \asymp (\log n)^{-1/(2\gamma) -1/2} \ n^{\frac{1+q}2 -\beta - [q^{1/\gamma} - r^{1/\gamma}]^\gamma}.
%\]
Therefore, if $\Lambda_n$ denotes the LHS in \eqref{prp_cusum3}, then
\[
\Lambda_n \approx \frac{ n^{1/2-\beta - (q^{1/\gamma} - r^{1/\gamma})^\gamma} }{\sqrt{n^{-q} + n^{-\beta - (q^{1/\gamma} - r^{1/\gamma})^\gamma}}}.
\]

\bitem
\item If $\gamma \le 1$, we choose $q = r$, so that $x_n = \mu_n$.
In that case, when $r > 2 \beta -1$, we have
\[
\Lambda_n \approx \frac{n^{1/2-\beta}}{\sqrt{n^{-r} + n^{-\beta}}} \asymp  n^{\frac12 - \beta + \frac{r}2} \to \infty,
\]
using the fact that $\beta < 1$. 

\item If $\gamma > 1$, we define $r_\gamma = (1 - 2^{-1/(\gamma - 1)})^\gamma$.
If $r \ge r_\gamma$, we choose $q = 1$, in which case
\begin{align*}
\Lambda_n 
&\approx  \frac{n^{1/2-\beta - (1 - r^{1/\gamma})^\gamma}}{\sqrt{n^{-1} + n^{-\beta + (1 - r^{1/\gamma})^\gamma}}} \\
&\approx n^{1 - \beta - (1 - r^{1/\gamma})^\gamma} \wedge \ n^{\frac12 (1 - \beta - (1 - r^{1/\gamma})^\gamma)},
\end{align*}
and $1 - \beta - (1 - r^{1/\gamma})^\gamma > 0$ when $r > (1 - (1-\beta)^{1/\gamma})^\gamma$.
If $r < r_\gamma$, we choose $q = r/r_\gamma$, yielding
\begin{align*}
\Lambda_n 
&\approx \frac{n^{1/2-\beta - r (r_\gamma^{-1/\gamma} - 1)^\gamma}}{\sqrt{n^{-r/r_\gamma} + n^{-\beta - r (r_\gamma^{-1/\gamma} - 1)^\gamma}}} \\
& \approx n^{\frac{1+r/r_\gamma}2 - \beta - r (r_\gamma^{-1/\gamma} - 1)^\gamma} \wedge \ n^{\frac12 (1 - \beta - r (r_\gamma^{-1/\gamma} - 1)^\gamma)}.
\end{align*}
Both exponents are positive when the first one is, which is the case when $r > (2^{1/(\gamma-1)} -1)^{\gamma-1} (\beta - 1/2)$.
\eitem
Comparing with the information bounds obtained by \cite{dj04} and described in \secref{gg}, we see that the CUSUM sign test achieves the detection boundary for the generalized Gaussian mixture model.

\subsection{The tail-run test}
\label{sec:run}

We now consider the tail-run test, which rejects for large values of 
\def\tail{L^\ddag}
\beq \label{tail}
\tail = \max\{\ell \ge 0: \xi_{(1)} = \cdots = \xi_{(\ell)} = 1\}.
\eeq
It is closely related to the trimmed longest run test of \cite{MR2413575}.
We note that, under the null, $\tail \sim_P {\rm Geom}(1/2)$ since in that case the signs introduced in \eqref{sign-seq} are i.i.d.~Rademacher random variables.

\begin{prp} \label{prp:tail}
Assuming \ref{a1}, the tail-run test is asymptotically powerful if there exists a sequence $(x_n)$ such that
\beq \label{prp_tail1}
n \bar F(x_n)  \to 0, \ n\eps_nG(-x_n-\mu_n) \to 0, \ n\eps_n \bar G(x_n - \mu_n) \to \infty;
\eeq
it is asymptotically powerless if there exists a sequence $(x_n)$ such that
\beq \label{prp_tail2}
n\bar F(x_n) \to \infty, \quad n\eps_n \bar G(x_n - \mu_n) \to 0.
\eeq
\end{prp}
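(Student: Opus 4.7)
Both directions rest on a common reduction. Since the $X_i$'s are independent, conditional on the ordered absolute values $|X|_{(1)} > \cdots > |X|_{(n)}$, the signs $\xi_{(j)}$ are conditionally independent with $\P(\xi_{(j)} = +1 \mid |X|_{(j)} = u) = p(u) := [(1-\eps_n) f(u) + \eps_n g(u-\mu_n)]/h(u)$, where $h$ is the density of $|X_i|$ under $H_1$. Assumption \ref{a1} forces $p \equiv 1/2$ under $H_0$, recovering $\tail \sim \text{Geom}(1/2)$; in general,
\[
\P_{H_1}(\tail \ge k) = \E_{H_1}\Bigl[\prod_{j=1}^{k} p(|X|_{(j)})\Bigr].
\]

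For the powerful direction, set $N_+ = \#\{i : X_i > x_n\}$ and $N_- = \#\{i : X_i < -x_n\}$. A short calculation gives $\E N_- = n(1-\eps_n) \bar F(x_n) + n \eps_n G(-x_n - \mu_n) \to 0$ by the first two conditions in \eqref{prp_tail1}, so $\P(N_- = 0) \to 1$ by Markov. The third condition gives $\E N_+ \ge n \eps_n \bar G(x_n - \mu_n) \to \infty$, and a Chebyshev bound on the binomial $N_+$ yields $N_+ \to \infty$ in probability. On $\{N_- = 0\}$, every observation with $|X_i| > x_n$ is positive and these occupy the top $N_+$ positions in the $|X|$-ordering, so $\xi_{(1)} = \cdots = \xi_{(N_+)} = +1$ and $\tail \ge N_+$. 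Combined with $\tail = O_P(1)$ under $H_0$, the test rejecting when $\tail$ exceeds any slowly-diverging threshold is asymptotically powerful.

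For the powerless direction, Scheffe's lemma reduces the claim to showing $\P_{H_1}(\tail \ge k) \to 2^{-k}$ for every fixed $k$. Writing $p(u) = 1/2 + \psi(u)$ with $|\psi| \le 1/2$ and expanding the product yields $|\P_{H_1}(\tail \ge k) - 2^{-k}| \le 2^{k+1} \, \E[\max_{j \le k} |\psi(|X|_{(j)})|]$, so it suffices to show $\max_{j \le k} |\psi(|X|_{(j)})| \to 0$ in probability. The first condition in \eqref{prp_tail2} guarantees $|X|_{(j)} > x_n$ for $j \le k$ with high probability, since the number of $|X_i| > x_n$ has mean at least $2n(1-\eps_n) \bar F(x_n) \to \infty$ and concentrates. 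Combined with the bound $|\psi(u)| \le \eps_n [g(u - \mu_n) + g(-u - \mu_n)]/(4(1-\eps_n) f(u))$ coming from $h(u) \ge 2(1-\eps_n) f(u)$, and the integrated ratio $n \eps_n \bar G(x_n - \mu_n)/(n \bar F(x_n)) \to 0$ coming from the second condition, $|\psi(|X|_{(j)})|$ is driven to zero. The main obstacle is converting the integrated tail comparison supplied by \eqref{prp_tail2} into a pointwise estimate on $|\psi|$ at the random levels $|X|_{(j)}$; this will be handled by slicing the tail and integrating $|\psi|$ against the joint density of the top order statistics, reducing back to the survival ratio in \eqref{prp_tail2}. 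A secondary but routine point is controlling the negative-tail contribution $\eps_n G(-x_n - \mu_n)$, not explicitly bounded by \eqref{prp_tail2} but automatic for the generalized-Gaussian model because $G$ is symmetric there.
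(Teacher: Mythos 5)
Your powerful-direction argument is correct and is essentially the paper's: bound $\E N_-$ by Markov using the first two conditions of \eqref{prp_tail1}, show $N_+\to\infty$ from the third, and observe that on $\{N_-=0\}$ the top $N_+$ positions of the absolute-value ordering all carry plus signs, so $L^\ddag\ge N_+\to\infty$ while $L^\ddag=O_P(1)$ under the null. No issues there.

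The powerless direction, however, has a genuine gap at its pivotal step. Your reduction to showing $\max_{j\le k}|\psi(|X|_{(j)})|\to 0$ in probability is fine, but the step that would prove this --- converting the integrated conditions of \eqref{prp_tail2}, which constrain only the survival functions $\bar F(x_n)$ and $\bar G(x_n-\mu_n)$, into pointwise control of the density ratio $\eps_n g(u-\mu_n)/f(u)$ at the random levels $|X|_{(j)}$ --- is exactly the part you defer ("this will be handled by slicing the tail\dots"), so as written the proof is a promissory note. The step can in fact be closed by a first-moment argument on the bad set $A_\delta=\{u>x_n:\psi(u)>\delta\}$: there $h(u)<\eps_n g(u-\mu_n)/(2\delta)$, so the expected number of observations with $|X_i|\in A_\delta$ is at most $n\eps_n\bar G(x_n-\mu_n)/(2\delta)\to0$, whence no top order statistic lands in $A_\delta$. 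But the symmetric set $\{u>x_n:\psi(u)<-\delta\}$ is controlled only by $n\eps_n G(-x_n-\mu_n)$, which \eqref{prp_tail2} does not bound; this is not the "routine" afterthought you describe, since the proposition is stated for general $G$ under \ref{a1}, not for the generalized Gaussian model. Indeed, if $G(\cdot-\mu_n)$ has a heavy enough left tail, the law of $L^\ddag$ under the alternative does \emph{not} converge to $\mathrm{Geom}(1/2)$ (the tail run is shortened), so your Scheff\'e-based target $\P_{H_1}(L^\ddag\ge k)\to2^{-k}$ is simply not attainable from \eqref{prp_tail2} alone; one can only get one-sided stochastic domination, which is what actually matters for a test rejecting for large $L^\ddag$.

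For comparison, the paper avoids the conditional density-ratio machinery entirely. It generates the sample via Bernoulli labels, notes that by the second condition of \eqref{prp_tail2} all contaminated observations lie below $x_n$ in absolute value with high probability, while by the first condition the null observations exceeding $x_n$ are numerous and include a negative one; hence the tail run terminates among the null observations above $x_n$ and is bounded by the tail run $L^0$ of the null subsequence, which is exactly $\mathrm{Geom}(1/2)$. This coupling gives the needed stochastic bound in a few lines and sidesteps both of the difficulties above.
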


Condition \eqref{prp_tail1} says, in order, that the expected number of observations from $F$ that exceed $x_n$ tends to zero, that the expected number of observations from $G(\cdot-\mu_n)$ that are below $-x_n$ tends to zero, and that the expected number of observations from $G(\cdot-\mu_n)$ that exceed $x_n$ tends to infinity.
Clearly, this implies that the sign sequence starts with a number of pluses that diverges to infinity in probability, so that the test is asymptotically powerful.
In contrast to that, Condition \eqref{prp_tail2} implies that the first sign in the sign sequence will come from the sign of a null observation with probability tending to one, 
so that the test is asymptotically powerless.

We remark that, if $n\eps_n \bar G(x_n - \mu_n) \gg \sqrt{\log \log n}$ in \eqref{prp_tail1}, then it implies \eqref{prp_cusum3}, guaranteeing that the CUSUM sign test is asymptotically powerful.  That said, in numerical experiments, the tail-run test clearly dominates the CUSUM sign test in the very sparse regime.

{\em Generalized Gaussian mixture model.}
We apply \prpref{tail} when $F=G$ is generalized Gaussian with parameter $\gamma > 0$ as in \secref{gg}.  
We parameterize $\mu_n$ as in \eqref{mu-gamma}, namely, $\mu_n = (\gamma r \log n)^{1/\gamma}$ for some $r \in (0,1)$, and $\eps_n = n^{-\beta}$ as always.  Fix $a > 0$ and choose $x_n = (\gamma (1+a) \log n)^{1/\gamma}$.
Using the fact that $\bar{F}(x) \asymp (1+x)^{1-\gamma} f(x)$ when $x >0$, we have
\begin{align}
n \bar F(x_n)
&\approx n^{-a} \to 0 \notag \\
n\eps_n \bar F(x_n - \mu_n) 
&\approx n^{1-\beta - [(1+a)^{1/\gamma} - r^{1/\gamma}]^\gamma}. \label{tail-ggmm}
\end{align}
When $r > (1 - (1-\beta)^{1/\gamma})^\gamma$ is fixed, we may choose $a > 0$ small enough that the exponent in \eqref{tail-ggmm} is positive, implying that \eqref{prp_tail1} holds.
Comparing with the information bounds described in \secref{gg}, we see that the tail-run test achieves the detection boundary in the very sparse regime when $\gamma > 1$. 
Otherwise, it is suboptimal.  In fact, based on \eqref{prp_tail2}, we find that the detection boundary for the tail-run test is given by
\[
\rho^{\rm tail}_\gamma(\beta) = (1 - (1-\beta)^{1/\gamma})^\gamma,
\]
which is the same as that of the max test (based on $\max_i X_i$).

%We also note that $(1 - (1-\beta)^{1/\gamma})^\gamma > \beta$ for all $1/2 < \beta < 1$ when $\gamma < 1$, showing that the tail-run test is dominated by the longest-run test in that case.  (See the corresponding discussion in \secref{longest}.)

\section{Classical tests} \label{sec:classical}

In this section we study some classical tests for the median, and also some classical tests for symmetry, which are both applicable in our context.

\subsection{Tests for the median} \label{sec:median}

Under mild assumptions on $G$, for example if $G$ is strictly increasing at 0, the mixture distribution in \eqref{h1} has strictly positive median, so that we may use a test for the median to test for heterogeneity.  We study two such tests: the $t$-test and the sign test.

\subsubsection{The $t$-test}  \label{sec:$t$-test}

Remember that the $t$-test rejects for large values of 
\beq \label{t-test}
T = \frac{\sum_{i=1}^n X_i}{\sqrt{\sum_{i=1}^n (X_i - \bar{X})^2}}. %, \quad \text{ where }\bar{X} := \frac1n \sum_{i=1}^n X_i.
\eeq
The distribution of $T$ under the null is scale-free and asymptotically standard normal for all $F$ with finite second moment.  With that additional assumption, the $t$-test is asymptotically distribution-free.  (Below, we require finite fourth moments for technical reasons.)

\begin{prp} \label{prp:$t$-test}
Assume \ref{a1}, and that $F$ and $G$ have finite fourth moments.  Then the $t$-test is asymptotically powerful (resp.~powerless) if 
\beq \label{prp_$t$-test}
\sqrt{n} \eps_n [\mu_n + {\rm mean}(G)] \to \infty \quad \text{(resp.~$\to 0$)}.
\eeq
\end{prp}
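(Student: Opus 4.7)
The plan is to analyze $T$ separately under each hypothesis via a CLT/Slutsky argument, treating the numerator and denominator independently. Observe that $T = \sqrt{n}\,\bar X / \sigma_n$ with $\sigma_n^2 := \frac{1}{n}\sum_i (X_i - \bar X)^2$, and that under both hypotheses the observations are i.i.d., so the whole argument reduces to controlling the per-observation mean $\mu^\star := \E[X_1]$ and variance $\sigma^{\star 2} := \Var(X_1)$ and then pushing everything through Slutsky.

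Under $H_0^n$, symmetry of $F$ (assumption \ref{a1}) gives $\mu^\star = 0$ and $\sigma^{\star 2} = \sigma_F^2 \in (0, \infty)$. The classical CLT yields $\sqrt{n}\,\bar X / \sigma_F \rightharpoonup N(0,1)$, while the LLN (using the finite second moment) gives $\sigma_n^2 \to \sigma_F^2$ in probability, hence $T \rightharpoonup N(0,1)$.

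Under $H_1^n$, a direct computation gives $\mu^\star = \eps_n[\mu_n + \mathrm{mean}(G)]$ and
\[
\sigma^{\star 2} = \sigma_F^2 + \eps_n\bigl[\Var(G) + \mu_n^2 + 2 \mu_n \mathrm{mean}(G) - \sigma_F^2\bigr] - (\mu^\star)^2,
\]
which remains bounded (in fact $\to \sigma_F^2$) in the regimes of interest, where $\eps_n \mu_n^2 \to 0$. Chebyshev applied to $\frac{1}{n}\sum_i X_i^2$ --- with the finite fourth moments of $F,G$ used to bound $\Var(X_1^2) \le \E[X_1^4] = O(1 + \eps_n \mu_n^4)$ --- yields $\sigma_n^2 = \sigma^{\star 2} + o_P(1)$, and a CLT (or Chebyshev) for $\bar X$ gives $\sqrt{n}(\bar X - \mu^\star) = O_P(\sigma^\star)$. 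Combining,
\[
T = \frac{\sqrt{n}\,\mu^\star}{\sigma^\star} + O_P(1).
\]
If $\sqrt{n}\,\mu^\star = \sqrt{n}\,\eps_n[\mu_n + \mathrm{mean}(G)] \to \infty$ then $T \to \infty$ in probability and the test is asymptotically powerful; if instead $\sqrt{n}\,\mu^\star \to 0$ then $T \rightharpoonup N(0,1)$ under $H_1^n$, matching the null limit, so the total variation distance between the laws of $T$ under $H_0^n$ and $H_1^n$ tends to $0$, giving asymptotic powerlessness.

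The main obstacle is the concentration of the denominator $\sigma_n^2$ around $\sigma^{\star 2}$: the fourth moment $\E[X_1^4]$ under $H_1^n$ carries an $\eps_n \mu_n^4$ term, so Chebyshev on $\sigma_n^2$ is effective only when $\eps_n \mu_n^4 = o(n)$, which is precisely where the fourth-moment hypothesis (rather than merely the second) is invoked. Once this concentration is secured, the condition $\sqrt{n}\,\mu^\star \to \infty$ translates directly into the stated hypothesis because $\sigma^\star$ is bounded away from $0$ and $\infty$; the only care in the write-up is making the $O_P$/$o_P$ bounds uniform across the sequence of alternatives.
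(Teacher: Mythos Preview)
Your approach is the same as the paper's --- analyze numerator and denominator separately via Chebyshev/CLT and combine with Slutsky --- but there is a genuine gap in the \emph{powerful} direction.

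You write that $\sigma^{\star 2}$ ``remains bounded (in fact $\to \sigma_F^2$) in the regimes of interest, where $\eps_n\mu_n^2 \to 0$.'' But the hypothesis $\sqrt{n}\,\eps_n\mu_n \to \infty$ does \emph{not} force $\eps_n\mu_n^2 \to 0$; nothing in the statement bounds $\mu_n$ from above, and in the sparse regime $\mu_n$ may be polynomially large. In that situation $\sigma^{\star 2} \asymp 1 + \eps_n\mu_n^2$ can diverge, and your displayed formula $T = \sqrt{n}\,\mu^\star/\sigma^\star + O_P(1)$ no longer yields $T \to \infty$ from $\sqrt{n}\,\mu^\star \to \infty$ alone. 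The fix --- which the paper carries out --- is to keep the $\eps_n\mu_n^2$ term in $\sigma^{\star 2}$ and in $\Var(\sigma_n^2) \asymp n^{-1}(1+\eps_n\mu_n^4)$, check that $\sigma_n^2/\sigma^{\star 2} \to_P 1$ holds in general (this uses $n\eps_n \to \infty$, not $\eps_n\mu_n^4 = o(n)$ as you suggest), and then observe that
\[
\frac{\sqrt{n}\,\mu^\star}{\sigma^\star} \asymp \frac{\sqrt{n}\,\eps_n\mu_n}{\sqrt{1+\eps_n\mu_n^2}} \asymp \sqrt{n\eps_n} \wedge \sqrt{n}\,\eps_n\mu_n \to \infty.
\]

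For the powerless direction your sketch is essentially right, but two points are asserted rather than shown. First, $\eps_n\mu_n^2 \to 0$ and $\eps_n\mu_n^4 = o(n)$ must be \emph{derived} from $\sqrt{n}\,\eps_n\mu_n \to 0$ together with $n\eps_n \to \infty$ (write $\eps_n\mu_n^2 = (\sqrt{n}\,\eps_n\mu_n)^2/(n\eps_n)$ and $\eps_n\mu_n^4 \ll n(\eps_n\mu_n^2)^2 \ll n$). Second, since the law under $H_1^n$ changes with $n$, ``a CLT'' is not automatic: you need a triangular-array version (Lyapunov suffices, with fourth-moment ratio $\asymp n^{-1}(1+\eps_n\mu_n^4) \to 0$), as the paper verifies explicitly.
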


In particular, in the generalized Gaussian mixture model with parameter $\gamma$, the $t$-test achieves the detection boundary in the dense regime if $\gamma \ge 1/2$, and grossly suboptimal in the sparse regime(s), where it requires that $\mu_n$ increase at least polynomially in $n$ to be powerful.

\subsubsection{The sign test}

The sign test rejects for large values of 
\beq \label{sign}
S = \sum_{i=1}^n \xi_{(i)},
\eeq
where the sign sequence is defined in \eqref{sign-seq}.
Under the null, $(S+n)/2 \sim \Bin(n, 1/2)$, since in that case $\xi_{(1)}, \dots, \xi_{(n)}$ are i.i.d.~Rademacher random variables.

\begin{prp} \label{prp:sign}
Assuming \ref{a1}, the sign test is asymptotically powerful (resp.~powerless) if 
\beq \label{prp_sign}
\sqrt{n} \eps_n [1/2 - G(-\mu_n)] \to \infty \quad \text{(resp.~$\to 0$)}.
\eeq
\end{prp}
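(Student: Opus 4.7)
The plan is to reduce the sign test to a comparison between two Binomial distributions and then invoke the CLT (for the powerful direction) together with a Hellinger-distance bound (for the powerless direction). I would first observe that $S = \sum_{i=1}^n \xi_{(i)} = \sum_{i=1}^n \sign(X_i)$, since reordering the summands does not affect the sum. Under $H_0$, assumption \ref{a1} gives $\P(\sign(X_i) = 1) = \bar F(0) = 1/2$. Under $H_1$, a direct computation using the symmetry of $F$ yields
\[
p_n := \P(\sign(X_i) = 1) = (1-\eps_n)\tfrac12 + \eps_n \bar G(-\mu_n) = \tfrac12 + \eps_n\bigl[\tfrac12 - G(-\mu_n)\bigr].
\]
Since the $X_i$ are i.i.d.~under both hypotheses, so are the signs; hence $(S+n)/2 \sim \Bin(n,1/2)$ under $H_0$ and $(S+n)/2 \sim \Bin(n,p_n)$ under $H_1$, and the sign test reduces to distinguishing these two Binomials.

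For the powerful direction, I would apply the CLT to $S$ under each hypothesis. Under $H_0$, $S/\sqrt n \rightharpoonup N(0,1)$. Under $H_1$, $\E[S] = 2n\eps_n[\tfrac12 - G(-\mu_n)]$ and $\Var(S) = n\bigl(1 - (2p_n-1)^2\bigr) \sim n$ because $p_n \to 1/2$; Lindeberg's condition is automatic since the summands are bounded, so $(S - \E[S])/\sqrt n \rightharpoonup N(0,1)$. Therefore, under the assumption $\sqrt n\, \eps_n[\tfrac12 - G(-\mu_n)] \to \infty$, rejecting for $\{S/\sqrt n > z_n\}$ with any slowly diverging $z_n$ produces a test with vanishing level and power tending to one.

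For the powerless direction, I would bound the total variation between the two Binomials via Hellinger distance $H$. By tensorization and the inequality $\text{TV} \le \sqrt{2}\,H$,
\[
\text{TV}\bigl(\Bin(n,\tfrac12),\Bin(n,p_n)\bigr)^2 \le 2n\, H^2\bigl(\Ber(\tfrac12),\Ber(p_n)\bigr).
\]
A Taylor expansion around $p=1/2$ gives $H^2(\Ber(\tfrac12),\Ber(p_n)) = O\bigl((p_n - 1/2)^2\bigr)$, so the right-hand side is $O\bigl(n\eps_n^2 [\tfrac12 - G(-\mu_n)]^2\bigr)$, which tends to $0$ under the assumption $\sqrt n\,\eps_n[\tfrac12 - G(-\mu_n)] \to 0$. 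The argument has essentially no obstacle beyond verifying the Taylor expansion and Lindeberg's condition, both of which are standard; the analysis is noticeably simpler than that of the CUSUM sign test because here the signs are pooled directly rather than through a supremum over partial sums as in \eqref{cusum}.
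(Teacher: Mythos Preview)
Your proof is correct. The reduction to comparing $\Bin(n,1/2)$ with $\Bin(n,p_n)$, where $p_n = \tfrac12 + \eps_n[\tfrac12 - G(-\mu_n)]$, is exactly what the paper does (working with $S^+ = (S+n)/2$). For the powerful direction your argument is essentially the same as the paper's: both compute the mean shift $n b_n$ with $b_n = \eps_n[\tfrac12 - G(-\mu_n)]$, note that $\Var_j(S^+) \le n/4$, and conclude via a first/second-moment argument (the paper packages this as \lemref{basic}, you invoke the CLT explicitly).

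The genuine difference is in the powerless direction. The paper establishes asymptotic normality of $S^+$ under $H_1$ via Lyapunov's CLT and then applies the second half of \lemref{basic}, which requires checking that the standardized means and variances match. You bypass this entirely by bounding the total variation directly through Hellinger tensorization, $\text{TV}^2 \le 2 n\, H^2\big(\Ber(\tfrac12),\Ber(p_n)\big) = O\big(n\eps_n^2[\tfrac12 - G(-\mu_n)]^2\big) \to 0$. This is a cleaner route here: it is non-asymptotic, avoids any distributional limit under the alternative, and would work even if $S$ failed to be asymptotically normal. The paper's moment-method approach, on the other hand, is the template it reuses for the other tests (signed-rank, number-of-runs), where a direct Hellinger bound on the full data would not isolate the test statistic; so the uniformity of \lemref{basic} is what it buys.
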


We first note that the sign test is asymptotically powerless when $\sqrt{n} \eps_n \to 0$.  We also note that, when $G$ is differentiable at 0 with strictly positive derivative,  
\beq \label{sign1}
\sqrt{n} \eps_n(1/2 - G(-\mu_n)) \asymp \sqrt{n} \eps_n (\mu_n \wedge 1).
\eeq
Compare with \eqref{prp_$t$-test}. 
Otherwise, except for the $\sqrt{\log\log n}$ term on the RHS, \eqref{prp_sign} coincides with \eqref{prp_cusum1}, implying that, in the generalized Gaussian mixture model with parameter $\gamma$, the sign test achieves the detection boundary in the dense regime if $\gamma \ge 1/2$.

\subsection{Tests for symmetry} \label{sec:sym}

Assuming that $F$ is symmetric --- a reasonable assumption in our nonparametric setting --- places the problem in the context of testing for symmetry, which has been considerably discussed in the literature.  Beyond the signed-rank test \citep{wilcoxon1945individual}, many other methods have been proposed: there are tests based on runs statistics \citep{MR926417,MR2413575}; tests of Kolmogorov-Smirnov type \citep{MR0021260} or Cram\'er - von Mises type \citep{MR0365885,MR930523,MR1821437,MR1997030,MR0301840}; tests with bootstrap calibration \citep{MR883122,MR1126334}; tests based on kernel density estimation \citep{MR1443358,MR1861381}; tests based on trimmed Wilcoxon tests and on gaps \citep{MR675891}; tests based on measures of skewness \citep{MR1731875}; and many more.  We study a few emblematic tests for symmetry: the signed-rank test, the Smirnov test, the number-of-runs test and the longest-run test.  

Recall the definition of the sign sequence in \eqref{sign-seq}.

\subsubsection{The signed-rank test} \label{sec:wilcox}

The Wilcoxon signed-rank test \citep{wilcoxon1945individual} rejects for large values of 
\beq \label{wilcox}
W = \sum_{i=1}^n (n-i+1) \, \xi_{(i)}.
\eeq
Under the null, the distribution of $W$ is known in closed form, and $\sqrt{3/n^3} \, W$ is asymptotically standard normal \citep{MR758442}.

\begin{prp} \label{prp:wilcox}
Assume \ref{a1}, that $F$ and $G$ have densities $f$ and $g$, and that 
\beq \label{skewed}
G(x) + G(-x) \le 1, \quad \forall x \in \bbR.
\eeq
Then the signed-rank  test is asymptotically powerful (resp.~powerless) if
\beq \label{prp_wilcox}
\sqrt{n} \eps_n [\zeta_n \vee \eps_n \lambda_n]
\to \infty \quad \text{(resp.~$\to 0$)}.
\eeq
where $\zeta_n := \frac12 -\int G(x-\mu_n) f(x) {\rm d}x$ and $\lambda_n := \frac12 -\int G(-x-2\mu_n) g(x) {\rm d}x$. 
\end{prp}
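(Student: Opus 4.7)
My plan is to reduce the signed-rank statistic to a bounded second-order $U$-statistic via the classical Walsh-averages representation, and then analyze it under both hypotheses via first- and second-moment calculations together with a CLT. Writing $R_i$ for the rank of $|X_i|$ in ascending order, one has $W = \sum_i R_i\, \sign(X_i) = 2 W_+ - n(n+1)/2$, where $W_+ := \sum_{1 \le i \le j \le n} \IND{X_i + X_j > 0}$. This reduction follows from the observation that for $|X_i| \ne |X_j|$, $\sign(X_i + X_j)$ equals the sign of whichever of $X_i, X_j$ has the larger absolute value.

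The next step is to compute $\E_1[W_+]$ by conditioning on which mixture component each observation in the pair $(X_i, X_j)$ is drawn from. Using the symmetry of $F$ together with the definitions of $\zeta_n, \lambda_n$, one finds that $\P_1(X_1 + X_2 > 0)$ equals $\tfrac12$, $\tfrac12 + \zeta_n$, and $\tfrac12 + \lambda_n$ in the cases where both, one, or neither of $X_1, X_2$ comes from $F$, respectively. This gives
\[
\E_1[W_+] - \E_0[W_+] = \binom{n}{2}\bigl[ 2\eps_n(1-\eps_n)\zeta_n + \eps_n^2 \lambda_n \bigr] + n\eps_n \bigl[\tfrac12 - G(-\mu_n)\bigr].
\]
Assumption~\eqref{skewed} combined with the symmetry of $F$ implies $\zeta_n, \lambda_n \ge 0$ when $\mu_n \ge 0$, while the last, linear-in-$n$ term is always $O(n) = o(n^{3/2})$; so the leading-order mean shift is of order $n^2 \eps_n [\zeta_n \vee \eps_n \lambda_n]$. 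For the variance, I would apply Hoeffding's decomposition of the bounded symmetric kernel $h(x,y) = \IND{x+y > 0}$ to obtain $\Var_i(W_+) = O(n^3)$ uniformly for $i \in \{0,1\}$, with $\Var_0(W_+) \sim n^3/12$.

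The powerful direction then follows from Chebyshev's inequality: when $\sqrt{n}\eps_n [\zeta_n \vee \eps_n \lambda_n] \to \infty$, the signal-to-noise ratio $(\E_1 W - \E_0 W)/\sqrt{\Var_i W}$ diverges under both hypotheses, so that a test rejecting at any critical value between the two means has vanishing type~I and type~II errors, driving the total variation distance to $2$.

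For the powerless direction, if $\sqrt{n}\eps_n [\zeta_n \vee \eps_n \lambda_n] \to 0$, the mean shift is $o(\sqrt{\Var_0 W})$ and the two variances agree to leading order, so the laws of $W$ under $H_0^n$ and $H_1^n$, suitably centered and scaled, should both converge to the same $\mathcal{N}(0,1)$ limit; total variation between the two laws of $W$ then tends to $0$. I expect this powerless half to be the main obstacle, since it requires full distributional convergence rather than mere moment matching: concretely, one must invoke a CLT for second-order $U$-statistics under both hypotheses (via Hoeffding's projection), with enough uniformity to handle the fact that the variance of the linear projection $h_1(x) = \E_1[\IND{x + X_2 > 0}]$ may degenerate mildly as $\eps_n \to 0$. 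Boundedness of the Walsh-averages kernel keeps the Hoeffding decomposition well-conditioned, so this CLT argument should go through routinely.
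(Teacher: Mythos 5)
Your proposal is correct and follows essentially the same route as the paper: the Walsh-averages/$U$-statistic representation $W_+ = \sum_{i\le j}\IND{X_i+X_j>0}$, the same conditional computation of $\P_1(X_1+X_2>0)$ yielding the $\zeta_n$, $\lambda_n$ terms (your constants are in fact the cleaner ones; the paper's differ by an immaterial factor), a Chebyshev/moment argument for the powerful direction, and asymptotic normality under the alternative via the Hoeffding--H\'ajek projection for the powerless direction. The only point you flag as delicate --- possible degeneracy of the projection variance --- does not arise, since $p_4 - p_2^2 = \tfrac1{12} + O(\eps_n)$ keeps $\Var_1$ of the projection at order $n^3/12$.
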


Condition \eqref{skewed} prevents $G$ from being skewed to the left, and makes $\zeta_n$ and $\lambda_n$ non-negative, since
\begin{align}
\zeta_n 
&= \frac12 \int \big[1 - G(x-\mu_n) - G(-x-\mu_n) \big] f(x) {\rm d}x \label{zeta1} \\
&\ge \frac12 \int \big(G(x+\mu_n) - G(x-\mu_n) \big)f(x) {\rm d}x \ge 0, \label{zeta2}
\end{align}
where the inequality comes from \eqref{skewed} and is an equality when $G$ is symmetric about 0.
Similarly,
\begin{align*}
\lambda_n &\ge \frac12 \int \big(1 - G(x+2\mu_n) - G(-x-2\mu_n) \big) g(x) {\rm d}x \\
& \qquad + \int \big(G(x+2\mu_n) - G(x) \big) g(x) {\rm d}x \ge 0.
\end{align*}
If $G$ is symmetric,
\[
\lambda_n = \frac12 \int \big(G(x+2\mu_n) - G(x-2\mu_n) \big) g(x) {\rm d}x.
\]

We first note that the signed-rank test is asymptotically powerless when $\sqrt{n} \eps_n \to 0$ since $\zeta_n \vee \lambda_n = O(1)$.  (In fact, this is the case whether \eqref{skewed} holds or not.)
For the rest of this discussion, assume that ${\rm support}(F) = \bbR$.
\bitem 
\item When $G$ is {\em not} symmetric, $\zeta_n$ is bounded away from 0 since the first inequality in \eqref{zeta2} is strict in that case.  Hence, the signed-rank test is asymptotically powerful when $\sqrt{n} \eps_n \to \infty$.
\item When $G$ is symmetric, \eqref{skewed} and \eqref{zeta2} are equalities.  Assume that  $\mu_n = O(1)$, and suppose in addition that we may take $g$ bounded and continuous.  Then by dominated convergence, $\zeta_n \asymp \mu_n \int g(x) f(x) {\rm d}x \asymp \mu_n$ and $\lambda_n \asymp \mu_n \int g^2(x) {\rm d}x \asymp \mu_n$.  This implies that 
\[\sqrt{n} \eps_n (\zeta_n \vee \eps_n \lambda_n) \asymp \sqrt{n} \eps_n (\mu_n \wedge 1).\]  
Compare with \eqref{sign1}.
\eitem

\subsubsection{The Smirnov test}
\label{sec:smirnov}

Recall the Smirnov test \citep{MR0021260} based on the statistic $\Smir$ defined in \eqref{smir}, or equivalently in \eqref{smv}.
Under the null, $(S_k)$ is a simple symmetric random walk, so the reflection principle gives 
\[\P(\Smir \ge k) = 2 \P(S_n \ge k+1) + \P(S_n = k), \quad \text{for all integer $k\ge 0$}.\] 
In particular, $\Smir/\sqrt{n}$ is asymptotically distributed as the absolute value of the standard normal distribution. 

\begin{prp} \label{prp:smirnov}
Assuming \ref{a1}, the Smirnov  test is asymptotically powerful (resp.~powerless) if
\beq \label{prp_smirnov}
\sqrt{n} \eps_n \sup_{x \geq 0} [\bar G(x - \mu_n) - G(- x - \mu_n)] \to \infty \quad \text{(resp.~$\to 0$)}.
\eeq
\end{prp}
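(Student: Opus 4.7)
The Smirnov statistic is $\Smir = \sup_{x\ge 0} Z_n(x)$ with $Z_n(x) := \sum_{i=1}^n [\IND{X_i > x} - \IND{X_i \le -x}]$ an i.i.d.\ sum, and under $H_1$, $\E Z_n(x) = n\,m_n(x)$ where $m_n(x) := \eps_n[\bar G(x-\mu_n) - G(-x-\mu_n)]$; the paper already records $\Smir/\sqrt n \rightharpoonup |N(0,1)|$ under $H_0$. The hypothesis controls $\sqrt n\sup_x m_n(x)$, which is exactly the drift of $Z_n$ on its natural $\sqrt n$-fluctuation scale, so my plan is to compare $Z_n$ to its centered counterpart in each direction.

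For the \emph{powerful} direction, I would pick $(x_n)$ with $\sqrt n\,m_n(x_n)\to\infty$, available by hypothesis. Applying Chebyshev's inequality to the single-point sum $Z_n(x_n)$ (mean $n\,m_n(x_n)$, variance at most $n$) yields $Z_n(x_n)/\sqrt n\to\infty$ in $\P_{H_1}$-probability. Since $\Smir\ge Z_n(x_n)$ and $\Smir/\sqrt n = O_P(1)$ under $H_0$, asymptotic power follows immediately.

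For the \emph{powerless} direction, I would work on the coupled space defined by $X_i = (1-B_i)Y_i + B_i Z_i$ with $Y_i\sim F$, $Z_i\sim G(\cdot-\mu_n)$, $B_i\sim\Ber(\eps_n)$ independent, so that the $Y$-based statistic $\Smir^Y$ realizes the $H_0$-distribution and $\Smir^X$ realizes the $H_1$-distribution. Setting $\Delta_n(x) := Z_n^X(x) - Z_n^Y(x) = \sum_{i:B_i=1}[V_x(Z_i) - V_x(Y_i)]$ with $V_x(W) := \IND{W>x} - \IND{W\le -x}$, we have $\E\Delta_n(x) = n\,m_n(x)$ and $\sup_x|\E\Delta_n(x)|/\sqrt n\to 0$ by hypothesis (plus the sign remark below). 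The centered process $\Delta_n - \E\Delta_n$ is a step process with at most $2N$ unit jumps, where $N := \sum B_i\sim\Bin(n,\eps_n)$; a Doob/Kolmogorov maximal inequality applied to the partial-sum martingale indexed by the $2N$ ordered jump locations $|Y_i|,|Z_i|$ gives $\sup_x|\Delta_n(x) - \E\Delta_n(x)| = O_P(\sqrt N) = O_P(\sqrt{n\eps_n})$, i.e.\ $O_P(\sqrt{\eps_n}) = o_P(1)$ on the $\sqrt n$-scale since the paper focuses on $\eps_n\to 0$. Thus $|\Smir^X - \Smir^Y|/\sqrt n\to 0$ in probability; combined with continuity of the null limit $|N(0,1)|$, this upgrades to convergence in total variation of the distributions of $\Smir$.

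The \emph{main obstacle} is the sharp $O_P(\sqrt N)$ maximal bound: a crude total-variation argument on $\Delta_n$ in $x$ only yields $O(N)$, insufficient when $n\eps_n\gg\sqrt n$, a regime the one-sided hypothesis still allows provided $\sup_x[\bar G - G]$ decays quickly enough. The right argument conditions on $\{B_i\}$ and on the ordering of $\{|Y_i|,|Z_i| : B_i=1\}$, then applies Doob's $L^2$ inequality to the cumulative signed jumps viewed as a martingale in the jump index. A secondary technical point is that the hypothesis controls only the positive part of $m_n$; since $|m_n(x)|\le\eps_n$ and $\eps_n\to 0$, the contribution of $\inf_x m_n$ is absorbed into the same error term.
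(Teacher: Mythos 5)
Your proposal is correct in substance and follows the same basic strategy as the paper --- couple the mixture sample to a pure-null sample via the Bernoulli labels and control the contribution of the $N\sim\Bin(n,\eps_n)$ contaminated observations to the supremum --- but the two directions are executed somewhat differently. For the powerful direction, your pointwise Chebyshev bound at a near-maximizing $x_n$ is more elementary than the paper's argument and perfectly adequate. For the powerless direction, the ``main obstacle'' you identify (a uniform $O_P(\sqrt N)$ bound on the centered jump process) is exactly where the paper's one-line trick comes in: writing the contaminated block's contribution in terms of the empirical distribution function of the $N$ non-null draws, the classical Kolmogorov--Smirnov bound $\sqrt m\,\sup_x|G_m-G|=O_P(1)$ --- applied once to the $Z_i$'s and once to the $Y_i$'s, whose $V_x$-sum has mean zero by symmetry of $F$ --- immediately gives $\sup_x\big|\Delta_n(x)-\E[\Delta_n(x)\mid N]\big|=O_P(\sqrt N)$. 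Your martingale route would also work, but it needs a step you gloss over: the compensator of the jump martingale indexed by the ordered $|Y_i|,|Z_i|$ is the \emph{conditional}-mean process $\sum_i h(|Z_i|)\IND{|Z_i|>x}$ (with $h$ the conditional sign bias given the magnitude), not the deterministic $n\,m_n(x)$, so a second uniform concentration step is required to pass between them --- at which point invoking the DKW-type bound directly is cleaner. One caveat you flag but do not fully resolve, and which the paper shares: the hypothesis controls only $\sup_{x\ge0}[\bar G(x-\mu_n)-G(-x-\mu_n)]$, which is always nonnegative (its value at $x=0$ is $1-2G(-\mu_n)\ge0$), but not the supremum of the negative part; your claim that the latter is ``absorbed'' via $|m_n|\le\eps_n$ requires $\sqrt n\,\eps_n\to0$, which \eqref{prp_smirnov} does not imply. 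This only affects the two-sided (total-variation) reading of ``powerless,'' not the power of the one-sided test, and the paper's own bound \eqref{smirnov-proof2} quietly substitutes $|\sup_x D_{F_n^1}(x)|$ for $\sup_x|D_{F_n^1}(x)|$ at the same spot, so you are no worse off than the original.
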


We first note that the Smirnov test is asymptotically powerless when $\sqrt{n} \eps_n \to 0$.
\bitem
\item When $G$ is {\em not} symmetric, the Smirnov test is asymptotically powerful when $\sqrt{n} \eps_n \to \infty$, since the supremum in \eqref{prp_smirnov} is bounded away from 0 in that case.
\item When $G$ is symmetric,
\begin{align*}
\sqrt{n} \eps_n \sup_{x \geq 0} [ \bar G(x - \mu_n) - G(- x - \mu_n) ]
\ge 2 \sqrt{n} \eps_n [1/2 - G(- \mu_n)],
\end{align*}
so the Smirnov test is at least asymptotically as powerful as the sign test in that case. Compare with \eqref{prp_sign}.
\eitem

\subsubsection{The number-of-runs test}
\label{sec:number}

The number-of-runs in the sign sequence $\xi_{(1)}, \dots, \xi_{(n)}$ is equal to $1 + R$, where 
\beq \label{runs}
R := \sum_{k=2}^n \IND{\xi_{(k)} \ne \xi_{(k-1)}},
\eeq
is the number of sign changes.
For example, $1+R = 5$ in the sequence 
\[(\xi_{(1)}, \dots, \xi_{(n)}) = (\underbrace{+,+,+,+}_{\text{a run}}, \underbrace{-,-,-}_{\text{a run}},\underbrace{+,+},\underbrace{-},\underbrace{+,+}).\] 
The number of runs test \citep{MR926417,McW} rejects for {\em small} values of $R$.  
Under the null, $R \sim \Bin(n-1, 1/2)$, since the summands in \eqref{runs} are i.i.d.~Rademacher random variables in this case.  

Here we content ourselves with a negative result showing that this test is comparatively less powerful than the other tests analyzed previously for testing heterogeneity.  
%Although we believe the result is essentially sharp, a positive result seems to require burdensome conditions that could potentially clutter the exposition.
 
\begin{prp} \label{prp:runs}
Assume \ref{a1}, and that $F$ and $G$ have densities $f$ and $g$ that are positive everywhere.
Then the number of runs test is asymptotically powerless if
\beq \label{prp_runs}
\sqrt{n} \eps_n \big(\zeta_n \wedge \eps_n \lambda_n\big) \to 0,
\eeq
where
\beq \label{prp_runs1}
\zeta_n := \int \frac{[g(x -\mu_n) - g(-x -\mu_n)]^2}{g(x -\mu_n) + g(-x -\mu_n)} {\rm d}x,
\eeq
and 
\beq \label{prp_runs2}
\lambda_n := \int \frac{[g(x -\mu_n) - g(-x -\mu_n)]^2}{f(x)} {\rm d}x.
\eeq
\end{prp}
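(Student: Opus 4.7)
The plan is to show that, under condition \eqref{prp_runs}, the laws of $R$ under $H_0^n$ and under $H_1^n$ converge to each other in total variation. The key structural fact is that conditioning on the ordered absolute values reduces the analysis of $R$ to a sum of $1$-dependent sign products whose mean shift and variance can be controlled in terms of $\zeta_n$ and $\lambda_n$.

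\textbf{Step 1 (reformulation).} Using the identity $\IND{\xi\ne\xi'} = (1 - \xi\xi')/2$ for $\pm 1$ signs, I rewrite $R = (n-1)/2 - T/2$ with $T := \sum_{k=2}^n \xi_{(k-1)}\xi_{(k)}$. Under the null, $\xi_{(1)},\dots,\xi_{(n)}$ are i.i.d.\ Rademacher, and the classical fact that the products $\xi_{(k-1)}\xi_{(k)}$ are themselves i.i.d.\ Rademacher gives $T/\sqrt{n-1}\rightharpoonup N(0,1)$. It therefore suffices to establish the same Gaussian limit for $T$ under the alternative.

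\textbf{Step 2 (conditional structure).} Let $Y_{(1)} > \cdots > Y_{(n)}$ denote the order statistics of $|X_1|,\dots,|X_n|$, and let $H(y) := 2(1-\eps_n) f(y) + \eps_n[g(y-\mu_n) + g(-y-\mu_n)]$ be the density of $|X_1|$ on $(0,\infty)$ under the alternative. Because $F$ is symmetric and the $X_i$ are i.i.d., conditional on the order statistics the signs $(\xi_{(k)})_{k=1}^n$ are independent Bernoullis with
\[
\P(\xi_{(k)} = +1 \mid Y_{(k)} = y) = \tfrac12 + \delta(y), \qquad \delta(y) := \frac{\eps_n[g(y-\mu_n) - g(-y-\mu_n)]}{2 H(y)}.
\]

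\textbf{Step 3 (first moment).} Setting $\delta_k := \delta(Y_{(k)})$, conditional independence gives $\E[\xi_{(k-1)}\xi_{(k)}\mid Y] = 4\delta_{k-1}\delta_k$, hence $\E_1[T] = 4\,\E_1\bigl[\sum_{k=2}^n \delta_{k-1}\delta_k\bigr]$. Bounding $|\delta_{k-1}\delta_k| \le (\delta_{k-1}^2+\delta_k^2)/2$ and noting that $\sum_k \delta(Y_{(k)})^2 = \sum_k \delta(|X_k|)^2$ is a sum of $n$ i.i.d.\ terms, I obtain $|\E_1[T]| \le 4n\,\E_1[\delta(|X_1|)^2]$. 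Applying the two lower bounds $H(y)\ge 2(1-\eps_n) f(y)$ and $H(y)\ge \eps_n[g(y-\mu_n)+g(-y-\mu_n)]$, together with the $x\mapsto -x$ symmetry of the integrands defining $\zeta_n$ and $\lambda_n$, yields $\E_1[\delta(|X_1|)^2] \lesssim \eps_n \min(\zeta_n,\,\eps_n\lambda_n)$. Condition \eqref{prp_runs} then gives $|\E_1[T]| = o(\sqrt{n})$.

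\textbf{Step 4 (variance).} By total variance, $\Var_1(T) = \E_1[\Var(T\mid Y)] + \Var_1(\E[T\mid Y])$. Conditionally, $(\xi_{(k-1)}\xi_{(k)})_k$ is $1$-dependent with each summand's variance at most $1$ and adjacent covariance $4\delta_{k-1}\delta_{k+1}(1 - 4\delta_k^2)$, so $\Var(T\mid Y) = (n-1) + O\bigl(\sum_k |\delta_{k-1}\delta_{k+1}|\bigr)$ and, by Step~3, $\E_1[\Var(T\mid Y)] = (n-1) + O(n\E_1[\delta(|X_1|)^2])$. Cauchy-Schwarz gives $\bigl(\sum_k \delta_{k-1}\delta_k\bigr)^2 \le \bigl(\sum_k \delta_k^2\bigr)^2$, and since $|\delta|\le 1/2$ the i.i.d.\ sum $\sum_k\delta(|X_k|)^2$ has variance $O(n\E_1[\delta^2])$, so $\Var_1(\E[T\mid Y]) \lesssim (n\E_1[\delta^2])^2 + n\E_1[\delta^2]$. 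Combined with Step~3, this yields $\Var_1(T) = (n-1)(1+o(1))$.

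\textbf{Step 5 (CLT and conclusion).} Under the alternative I apply a Lindeberg CLT to the conditionally $1$-dependent triangular array $(\xi_{(k-1)}\xi_{(k)})_k$ given $Y$: boundedness $|\xi_{(k-1)}\xi_{(k)}|=1$ makes the Lindeberg condition automatic, uniformly in $Y$. Combined with the mean and variance estimates of Steps~3-4 (and concentration of $\sum_k\delta_k^2$ around its expectation), this gives $T/\sqrt{n-1}\rightharpoonup N(0,1)$ under the alternative as well. Since $R = (n-1)/2 - T/2$, the laws of $R$ under $H_0^n$ and $H_1^n$ merge asymptotically in total variation, so the number-of-runs test is asymptotically powerless. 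The main obstacle is Step~5: transferring the conditional CLT through the random conditioning on $(Y_{(1)},\dots,Y_{(n)})$ while keeping the limiting variance pinned at $1$; Steps~3-4 provide the moment inputs, but careful bookkeeping is required to ensure that both the random conditional mean $4\sum_k\delta_{k-1}\delta_k$ and the random deviation $\Var(T\mid Y) - (n-1)$ vanish on the scale of $\sqrt{n}$.
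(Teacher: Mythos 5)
Your proposal is correct and follows essentially the same route as the paper: condition on the absolute values so the signs become independent Bernoullis with bias $\delta(y)=p(y)-1/2$, bound the conditional mean shift and variance perturbation of the runs count by $n\E_1[\delta^2]\asymp n\eps_n(\zeta_n\wedge\eps_n\lambda_n)=o(\sqrt n)$, and invoke a CLT for the resulting $m$-dependent array under both hypotheses. The paper handles the conditional-to-unconditional transfer you flag in Step~5 exactly as you suggest, via Bernstein concentration of $\sum_k\delta_k^2$ to define a high-probability set of conditioning values on which the $m$-dependent CLT applies uniformly.
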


We first note that $\zeta_n \le \int (g(x -\mu_n) + g(-x -\mu_n)) {\rm d}x = 2$, so the test is asymptotically powerless in the sparse regime $\sqrt{n} \eps_n \to 0$.

We apply \prpref{runs} when $F=G$ is generalized Gaussian with parameter $\gamma$.  Assume that $\mu_n = O(1)$.  Then $\lambda_n = O(\mu_n ^{2 \wedge (2\gamma+1)})$.  Indeed, define $a_n = \mu_n \gamma^{-1/\gamma}$.  Then
\begin{align*}
& \int \frac{[f(x -\mu_n) - f(-x -\mu_n)]^2}{f(x)} {\rm d}x \\
&\propto \int_0^\infty e^{-2 |x -a_n|^\gamma + |x|^\gamma} \big[1 - e^{|x +a_n|^\gamma - |x -a_n|^\gamma} \big]^2 {\rm d}x \\
&\propto \int_0^\infty e^{-x^\gamma + O(a_n (x \vee a_n)^{\gamma-1})} \big[1 - e^{O(a_n (x \vee a_n)^{\gamma-1})} \big]^2 {\rm d}x \\
&\le a_n^2 \int_0^\infty e^{-\frac12 x^\gamma} O(x \vee a_n)^{2\gamma-2} {\rm d}x \asymp a_n^{2 \wedge (2\gamma+1)} \asymp \mu_n^{2 \wedge (2\gamma+1)}.
\end{align*}
Hence, the test is asymptotically powerless if $\sqrt{n} \eps_n^2 \mu_n^{2 \wedge (2 \gamma + 1)} \to 0$.  This shows that, within this model, the test is much weaker than the sign test, the signed-rank test or the Smirnov test, which only require $\sqrt{n} \eps_n \mu_n \to \infty$ to be powerful.

\subsubsection{The longest-run test}
\label{sec:longest}

The length of the longest-run (of pluses) is defined as
\beq \label{L}
L = \argmax_\ell\{\exists j :  \xi_{(j+1)} = \cdots = \xi_{(j+\ell)} = 1\}.
\eeq
For example, $L = 8$ in the sequence 
\[
(\xi_{(1)}, \dots, \xi_{(n)}) = (-,-,-,\underbrace{+,+,+,+,+,+,+,+}_{\text{the longest-run}}, -,-,-,+,+,+,+,-).
\]
The longest-run test \citep{MR0004453} rejects for large values of $L$.
%As a function of $\xi_{(1)}, \cdots, \xi_{(n)}$, it is distribution-free.
The asymptotic distribution of $L$ under the null is sometimes called the Erd\H{o}s-R\'enyi law, due to early work by \cite{MR0272026}, who discovered that $L/\log n \to 1/\log 2$ almost surely.
The limiting distribution was derived later on \citep{MR972770}.

\begin{prp} \label{prp:longest}
Assume \ref{a1}, and that $F$ and $G$ have densities $f$ and $g$ that are positive everywhere.
Then the longest-run test is asymptotically powerless if there is a sequence $(x_n)$ such that
\beq \label{prp_longest1}
n \bar F(x_n) \to 0, \quad n \eps_n G(-x_n-\mu_n) \to 0, \quad n \eps_n \bar G(x_n-\mu_n) \to 0,
\eeq
and
\beq \label{prp_longest2}
\eps_n (\log n) \sup_{0 \le y \le x_n} \frac{[g(y -\mu_n) - g(-y -\mu_n)]_+}{f(y) + \eps_n g(y -\mu_n)} \to 0.
\eeq
It is asymptotically powerful if {\em either}:
\bitem
\item[(i)]
There is a sequence $(x_n)$ such that
\beq \label{prp_longest3} 
n \bar F(x_n) \to 0, \quad n \eps_n G(-x_n-\mu_n) \to 0, \quad n \eps_n \bar G(x_n-\mu_n) \gg \log n.
\eeq
\item[(ii)] There is a sequence $(x_n)$ satisfying \eqref{prp_longest1}, another sequence $(x_n')$ with $0 \le x'_n \le x_n$, as well as $a \in (0,1)$, $b < 1- \log(2-a)/\log(2)$ and $c,d > 0$ fixed, such that
\beq \label{prp_longest4}
\eps_n \inf_{x'_n \le x \le x'_n+c} \frac{g(x -\mu_n) - g(-x -\mu_n)}{f(x) + \eps_n g(x -\mu_n)} \ge a,
\eeq
and
\beq \label{prp_longest5}
\inf_{x'_n \le x \le x'_n+c} \big( f(x) + \eps_n g(x -\mu_n) \big) \ge d n^{-b}.
\eeq
\eitem
\end{prp}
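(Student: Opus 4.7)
Conditionally on the ordered absolute values $|X|_{(1)}>\cdots>|X|_{(n)}$, the signs $\xi_{(i)}$ are independent Bernoullis with
\[q(y):=\P(\xi_{(i)}=+1 \mid |X|_{(i)}=y)=\frac{h(y)}{h(y)+h(-y)},\qquad h(x):=(1-\eps_n)f(x)+\eps_n g(x-\mu_n),\]
by Bayes and the symmetry of $F$. Since $h(y)-h(-y)=\eps_n[g(y-\mu_n)-g(-y-\mu_n)]$, the sign bias $q(y)-\tfrac12=[h(y)-h(-y)]/\{2[h(y)+h(-y)]\}$ is controlled exactly by the ratios in \eqref{prp_longest2} and \eqref{prp_longest4}. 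The analysis in each of the three parts then reduces, after monotone coupling to an i.i.d.\ Bernoulli sequence, to the classical Erd\H{o}s--R\'enyi--Guibas--Odlyzko limit theorem for the longest $+1$-run.

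\paragraph{Powerlessness.} Under \eqref{prp_longest1}, the event $E_n=\{\max_i|X_i|\le x_n\}$ has probability $1-o(1)$ since the expected number of violations is $\le 2n\bar F(x_n)+n\eps_n\bar G(x_n-\mu_n)+n\eps_n G(-x_n-\mu_n)=o(1)$. On $E_n$, the bound $h(y)+h(-y)\gtrsim f(y)+\eps_n g(y-\mu_n)$ yields $q(|X|_{(i)})\le\tfrac12+\delta_n$ uniformly, where $\delta_n\lesssim\eps_n\sup_{0\le y\le x_n}[g(y-\mu_n)-g(-y-\mu_n)]_+/[f(y)+\eps_n g(y-\mu_n)]$ and $\delta_n\log n=o(1)$ by \eqref{prp_longest2}. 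Coupling via common uniforms, $L$ is stochastically dominated by the longest $+1$-run $L^+$ in an i.i.d.\ Bernoulli$(\tfrac12+\delta_n)$ sequence of length $n$. Classical theory gives $L^+=\log_{1/p_n}n+O_P(1)$ with $p_n=\tfrac12+\delta_n$; and since $\log_{1/p_n}n-\log_2 n=O(\delta_n\log n)=o(1)$, $L^+$ has the same limit law as the null $L$. Thus the upper tail of $L$ under the alternative asymptotically matches the null one, giving powerlessness.

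\paragraph{Powerfulness.} For case (i), the same reasoning yields $\P(E_n)\to 1$; moreover, the number $N_n^+$ of signal observations with $X>x_n$ is Binomial with mean $n\eps_n\bar G(x_n-\mu_n)\gg\log n$, so $N_n^+\gg\log n$ whp by Chernoff. On the joint event, the $N_n^+$ largest $|X_{(i)}|$ are exactly these positive signal observations, forcing $\xi_{(1)}=\cdots=\xi_{(N_n^+)}=+1$, hence $L\ge N_n^+\gg\log n\gg\log_2 n$. For case (ii), \eqref{prp_longest4} rearranges cleanly: $\eps_n[g(y-\mu_n)-g(-y-\mu_n)]\ge a[f(y)+\eps_n g(y-\mu_n)]\ge a\,h(y)$, so $(1-a)h(y)\ge h(-y)$, which gives $q(y)\ge 1/(2-a)$ on $[x'_n,x'_n+c]$. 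The set $I_n=\{i:|X|_{(i)}\in[x'_n,x'_n+c]\}$ is a consecutive block of indices whose expected size, equal to $n$ times the integral of the density of $|X|$ over the band, is $\gtrsim n\cdot c\cdot d\,n^{-b}=cd\,n^{1-b}$ by \eqref{prp_longest5}, so $|I_n|\gtrsim n^{1-b}$ whp. The longest $+1$-run inside $I_n$ lower-bounds $L$ and, by coupling, dominates the longest $+1$-run in i.i.d.\ Bernoulli$(1/(2-a))^{|I_n|}$, which is $(1+o_P(1))\log|I_n|/\log(2-a)\ge(1+o_P(1))(1-b)\log n/\log(2-a)$. The hypothesis $b<1-\log(2-a)/\log 2$ makes this $(1+\eta)\log_2 n$ for some fixed $\eta>0$, overwhelming the null fluctuations $\log_2 n+O_P(1)$.

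\paragraph{Main obstacle.} The hardest piece is the near-null analysis of $L^+$ in the powerlessness step: uniform control of $L^+-\log_{1/p_n}n$ is needed as $p_n\to\tfrac12$ at rate $\delta_n=o(1/\log n)$. My route is a second-moment (or Chen--Stein) computation for the count of $+1$-runs of a prescribed length, yielding a Poisson limit uniform in $p_n$. All other ingredients---Chernoff bounds for $N_n^+$ and $|I_n|$, the rearrangement identity for $q$, and the monotone couplings---are routine.
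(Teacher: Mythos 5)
Your proposal follows essentially the same route as the paper's proof: condition on the absolute values so the signs become independent Bernoullis, truncate via \eqref{prp_longest1}, bound the sign bias through the ratios in \eqref{prp_longest2} and \eqref{prp_longest4}, and compare longest-run centerings $\log_{1/p_n}n$ versus $\log_2 n$; the only cosmetic difference is that the paper invokes the Gordon--Schilling--Waterman limit law together with a stochastic bound on the limiting variable, where you propose a Chen--Stein computation to obtain the needed uniformity as $p_n\to\tfrac12$. (One small slip: in case (i) the event $E_n=\{\max_i|X_i|\le x_n\}$ cannot hold with high probability since $n\eps_n\bar G(x_n-\mu_n)\to\infty$; the event you actually use --- that every observation with $|X_i|>x_n$ is a positive signal observation --- is the correct one and follows from the first two conditions of \eqref{prp_longest3}.)
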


Of all the classical tests that we studied, this is the only one with some power in the sparse regime.  The flip side is that it has very little power in the dense regime.  

We apply \prpref{longest} when $F=G$ is generalized Gaussian with parameter $\gamma$.  
Ignoring the $\log n$ term on the RHS, \eqref{prp_longest3} is essentially equivalent to \eqref{prp_tail1}, and as a consequence, the longest-run test is asymptotically powerful when $r > \rho^{\rm tail}_\gamma(\beta)$.
However, this is not the detection boundary
for the longest-run test in all cases.  Indeed, assume that $r > \beta$.
%Indeed, assume that $r \le \rho^{\rm tail}_\gamma(\beta)$.
Recall the parameterization of $\eps_n$ and $\mu_n$ in \eqref{eps} and \eqref{mu-gamma}, where $r < 1$ is fixed.  Choose $x_n = (t \gamma \log n)^{1/\gamma}$ where $t > 0$ is chosen below.  Then using the fact that $\bar{F}(x) \asymp (1+x)^{1-\gamma} f(x)$ when $x >0$, we have
\begin{align*}
& n [ \bar F(x_n) + \eps_n F(-x_n-\mu_n) + \eps_n \bar F(x_n-\mu_n)] \\
&\qquad \approx n [n^{-t} + n^{-\beta} n^{-(t^{1/\gamma} - r^{1/\gamma})^\gamma}] \to 0,
\end{align*}
for $t > 0$ large enough.
Let $a \in (0,1)$ be such that $\beta < 1- \log(2-a)/\log(2)$.
Then observe that, for $c > 0$ fixed,   
\[
\eps_n \min_{\mu_n-c \le x \le \mu_n+c} \frac{f(x -\mu_n) - f(-x -\mu_n)}{f(x) + \eps_n f(x -\mu_n)} \ge \eps_n \frac{f(c) - f(2\mu_n-c)}{f(\mu_n-c) + \eps_n f(0)} \sim \frac{f(c)}{f(0)},
\]
when $r > \beta$.  Choose $c > 0$ sufficiently small that $f(c)/f(0) \ge a$, so that \eqref{prp_longest4} is satisfied with $x_n' = \mu_n$.  
Finally, 
\[
\min_{\mu_n-c \le x \le \mu_n+c} \big( f(x) + \eps_n f(x -\mu_n) \big) \ge \eps_n \min_{\mu_n-c \le x \le \mu_n+c} f(x -\mu_n) \ge f(c) n^{-\beta},
\]
so that \eqref{prp_longest5} is satisfied with $d = f(c)$.  
We conclude that the test is asymptotically powerful when $r > \beta$.  This can be seen to be sharp based on \eqref{prp_longest1}-\eqref{prp_longest2}, so that the detection boundary for the longest run test is given by $r = \beta \wedge \rho^{\rm tail}_\gamma(\beta)$, meaning
\[
\rho_\gamma^{\rm long}(\beta) = 
\begin{cases} \beta, & \gamma \le 1; \\ (1 - (1-\beta)^{1/\gamma})^\gamma, & \gamma > 1.
\end{cases}
\]

\section{Numerical experiments}
\label{sec:numerics}

In this section, we perform simple simulations to quantify the finite-sample performance of each of the tests whose theoretical performance we established.
We consider the normal mixture model and some other generalized Gaussian mixture models.

In all these models, we take as benchmarks the likelihood ratio test (LRT) and the higher criticism (HC) --- we used the variant ${\rm HC}_n^+$ recommended by \cite{dj04}. The LRT is the optimal test when the models \eqref{h0}-\eqref{h1} are completely specified, meaning when $F,G,\eps_n,\mu_n$ are all known.
The HC has strong asymptotic properties under various mixture models and only requires knowledge of $F$.
All the other tests we considered are distribution-free, except for the $t$-test, which is only so asymptotically.

\subsection{Fixed sample size}
In this first set of experiments, the sample size was set at $n=10^6$. 
In the alternative, instead of a true mixture as in \eqref{h1}, we drew exactly $m := [n\eps]$ observations from $G(\cdot-\mu)$ and the other $n-m$ from $F$. 
We did so to avoid important fluctuations in the number of positive effects, particularly in the very sparse regime. 
All models were parameterized as described in \secref{gg}.  
In particular, $\eps = \eps_n = n^{-\beta}$ with $\beta \in (0,1)$ fixed, and in all cases, $\mu = \mu_n=n^{s-1/2}$ in the dense regime $\beta<1/2$.
We chose a few values for the parameter $\beta$, illustrating all regimes pertaining to a given model, while the parameter $s$ (or $r$) took values in a finer grid.  
%(These parameters are defined in \secref{gg}.)
Each situation was repeated 200 times for each test. 
We calibrated the distribution-free tests, and the $t$-test, using their corresponding limiting distributions under the null --- which was accurate enough for our purposes since the sample size $n=10^6$ is fairly large --- setting the level at 0.05.
The LRT and HC were calibrated by simulation and set at the same level.
What we report is the average empirical power --- the fraction of times the alternative was rejected.

\subsubsection{Normal mixture model}

In this model, $F=G$ is standard normal.  
The simulation results are reported in \figref{normal}.

\begin{figure}[h!]
\centering
\includegraphics[scale=.3]{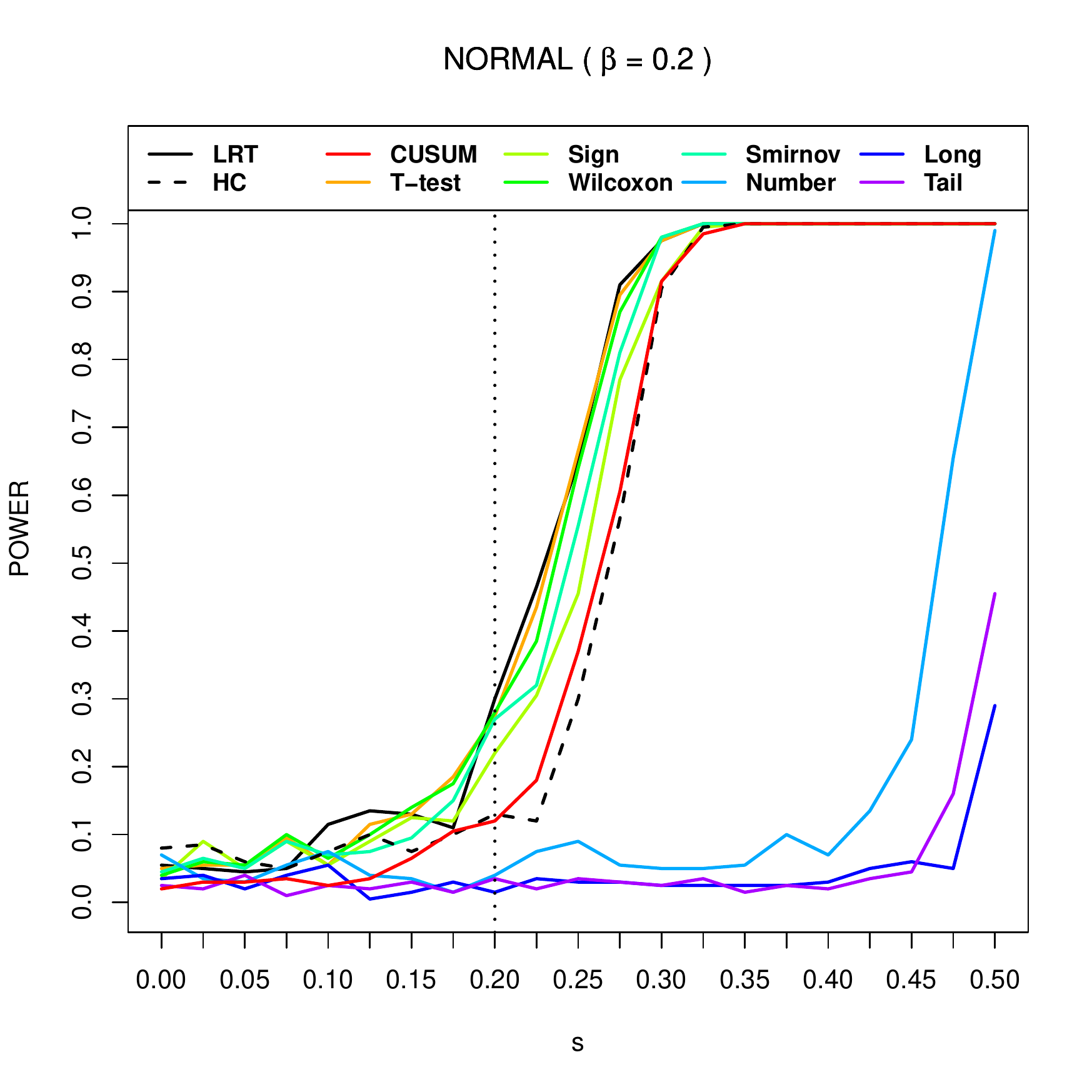}
\includegraphics[scale=.3]{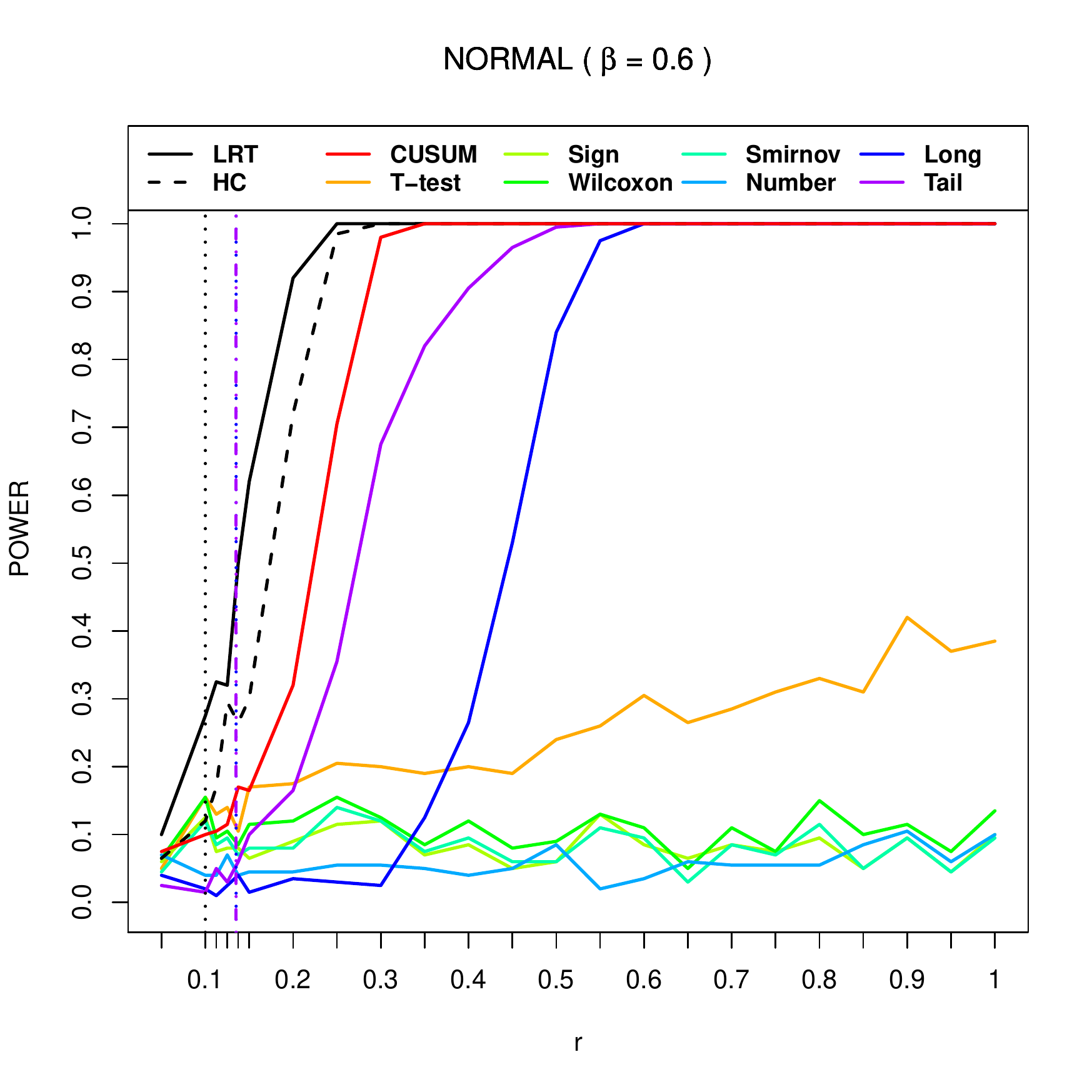}%
\includegraphics[scale=.3]{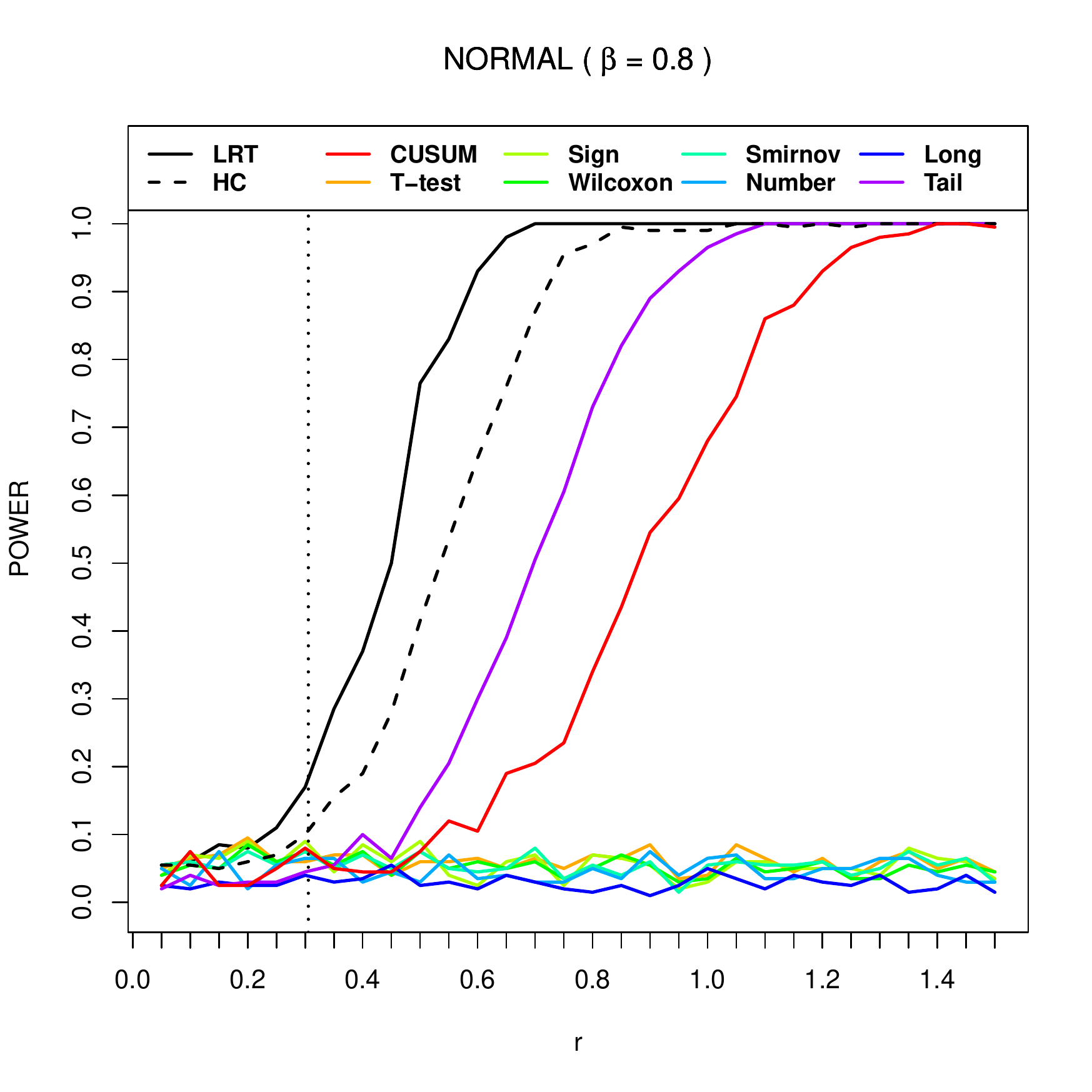}
\caption{Simulation results for the normal mixture model in three distinct sparsity regimes.
The black vertical line delineates the detection threshold.  The purple vertical line in the moderately sparse regime delineates the detection threshold for the longest-run and tail-run tests.}
\label{fig:normal}
\end{figure}

{\em Dense regime.}  We set $\beta=0.2$ and $\mu_n = n^{s-1/2}$ with $s$ ranging from 0 to 0.5 with increments of 0.025. 
From \secref{gg}, the detection threshold is at $s = \beta = 0.2$.
Moreover, the results we established in \secref{classical} imply that the $t$-test, sign, signed-rank, Smirnov tests, as well as our CUSUM sign test, all achieve this detection threshold.
The simulations are clearly congruent with the theory, will all these tests closely matching the performance of the LRT, with the HC and CUSUM sign test lagging behind a little bit.  
We also saw that the number-of-runs test is asymptotically less powerful than the aforementioned tests, and that the longest-run and tail-run tests are essentially powerless in the dense regime.  
This is obvious in the power plots.

{\em Moderately sparse regime.}
We set $\beta = 0.6$ and $\mu_n =\sqrt{2r\log(n)}$ with $r$ ranging from 0 to 1 with increments of 0.05, and added three more points equally spaced between 0.1 and 0.15 to zoom in on the phase transition.
Our theory says that all distribution-free tests are asymptotically powerless, except for the longest-run, tail-run and CUSUM sign tests, with the latter outperforming the other two.
This is indeed what happens in the simulations, although there is a fair amount of difference in power between the longest-run and tail-run tests.  The CUSUM sign test lags a little behind the HC.
The $t$-test shows some power, although not much.

{\em Very sparse regime.}
We set $\beta = 0.8$ and $\mu_n =\sqrt{2r\log(n)}$ with $r$ ranging from 0 to 1.5 with increments of 0.05. 
Our theory says that all distribution-free tests are asymptotically powerless, except for the longest-run, tail-run and CUSUM sign tests, and that all three are asymptotically near-optimal.  
In the simulations, however, the longest-run test shows no power whatsoever, and the tail-run test is noticeably more powerful than the CUSUM sign test, although quite far from the performance of the HC, which almost matches that of the LRT.
To understand what is happening, take the most favorable situation for the tail-run test, where all positives effects --- 16 of them here --- are larger than all the other observations in absolute value. 
In that case, the tail-run is of length $\tail \ge 16$, resulting in a p-value for that test smaller than $2^{-16} \approx 0.00002$.  
For the CUSUM sign test, $M \ge S_{16}/\sqrt{16} = \sqrt{16} = 4$.  But under the null, $M$ is close to $\sqrt{2\log \log n} \approx 2.3$, with deviations of about $\pm 2$ (obtained from simulations).
So even then, the number of true positives is barely enough to allow the CUSUM sign test to be fully powerful.  
As for the longest-run test, under the null, the longest-run is of length about $\log_2 n \approx 20$, with deviations of about $\pm 2$, which explains why  this test has no power.

\subsubsection{Double-exponential mixture model}

In this model, $F=G$ is double-side exponential with variance 1. 
The simulation results are reported in \figref{dexp}.

\begin{figure}[h!]
\centering
\hspace*{-.3in}
\includegraphics[scale=.3]{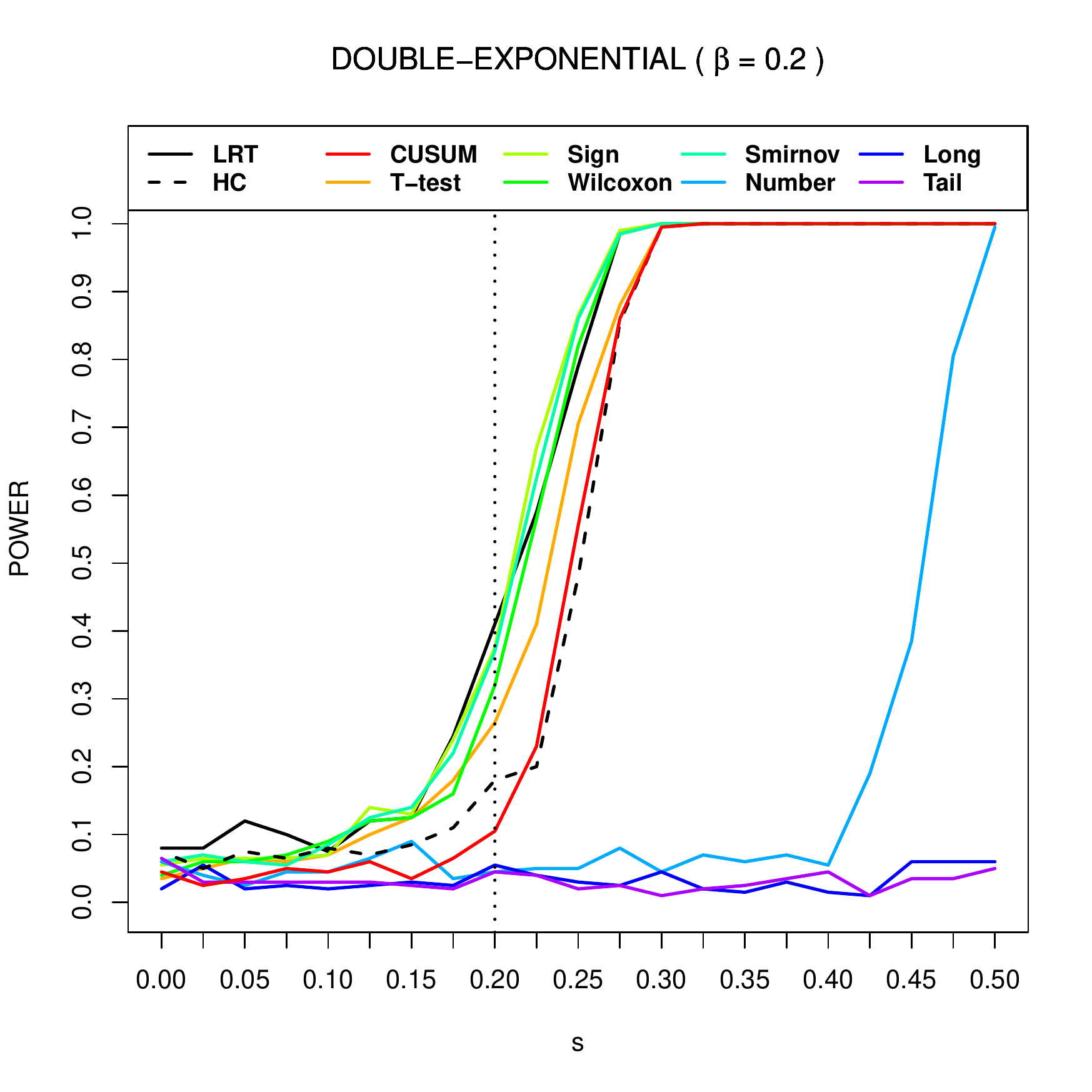}%
\includegraphics[scale=.3]{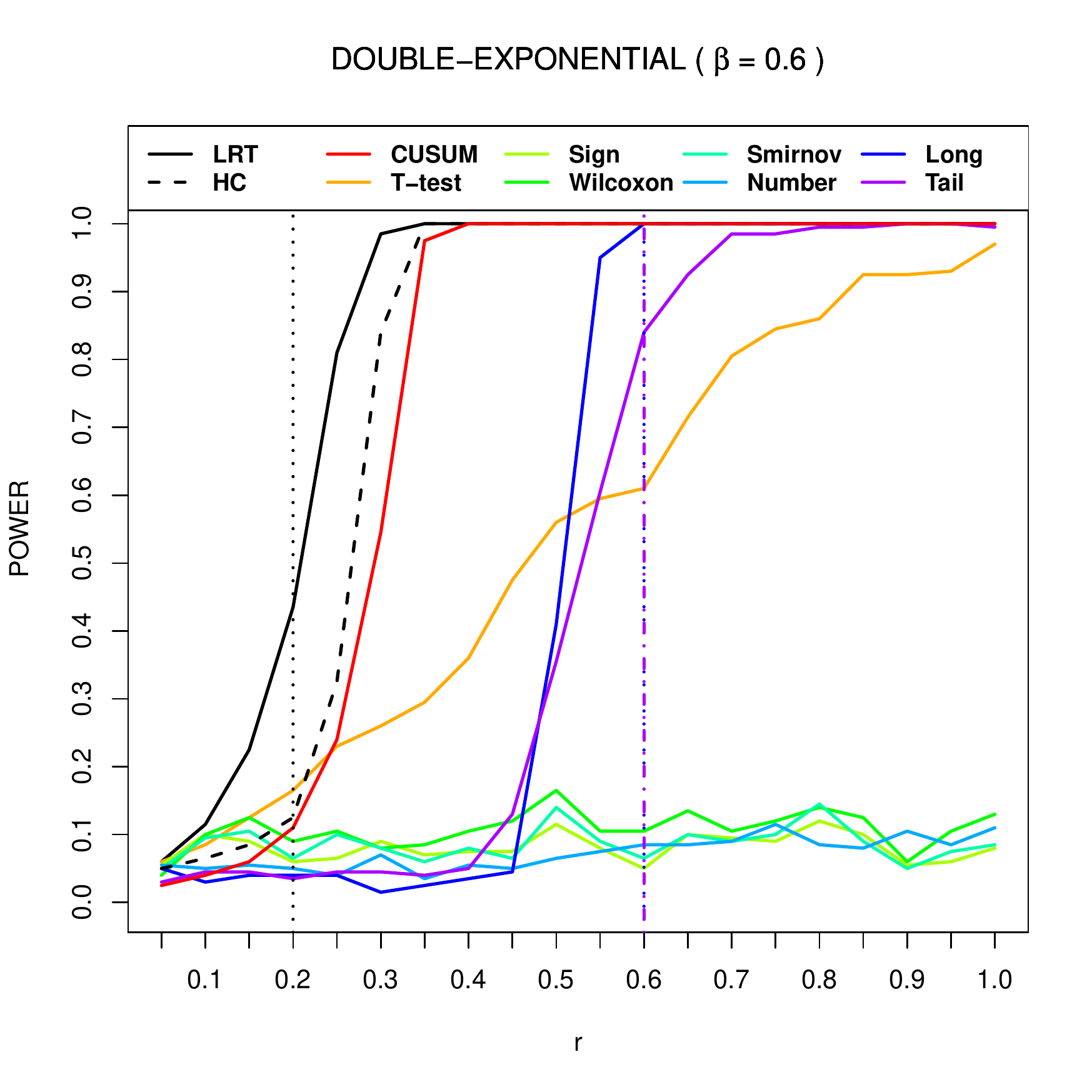}
\caption{Simulation results for the double-side exponential mixture model in the dense and sparse regimes.
The black vertical line delineates the detection threshold, while the other vertical line in the sparse regime delineates the detection threshold for the longest-run and tail-run tests.}
\label{fig:dexp}
\end{figure}

{\em Dense regime.}
The setting is exactly as in the normal mixture model.
Our theoretical findings were also similar, and are corroborated by the simulations.   

{\em Sparse regime.}
We set $\beta = 0.6$ and $\mu_n = r\log n$ with $r$ ranging from 0 to 1 with increments of 0.05.  
The simulations are congruent with the theory, with the CUSUM sign test and HC  being close in performance, while the longest-run and tail-run tests are far behind as predicted by the theory.  
The $t$-test shows a fair amount of power here, and is even fully powerful at $r =1$.
The other tests are powerless as predicted by the theory.

\subsubsection{Generalized Gaussian mixture model with $\gamma = 1/2$}

In this model, $F=G$ is generalized Gaussian with parameter $\gamma=0.5$.  
The simulation results are reported in \figref{ggmm}.

\begin{figure}[h!]
\centering
\hspace*{-.3in}
\includegraphics[scale=.3]{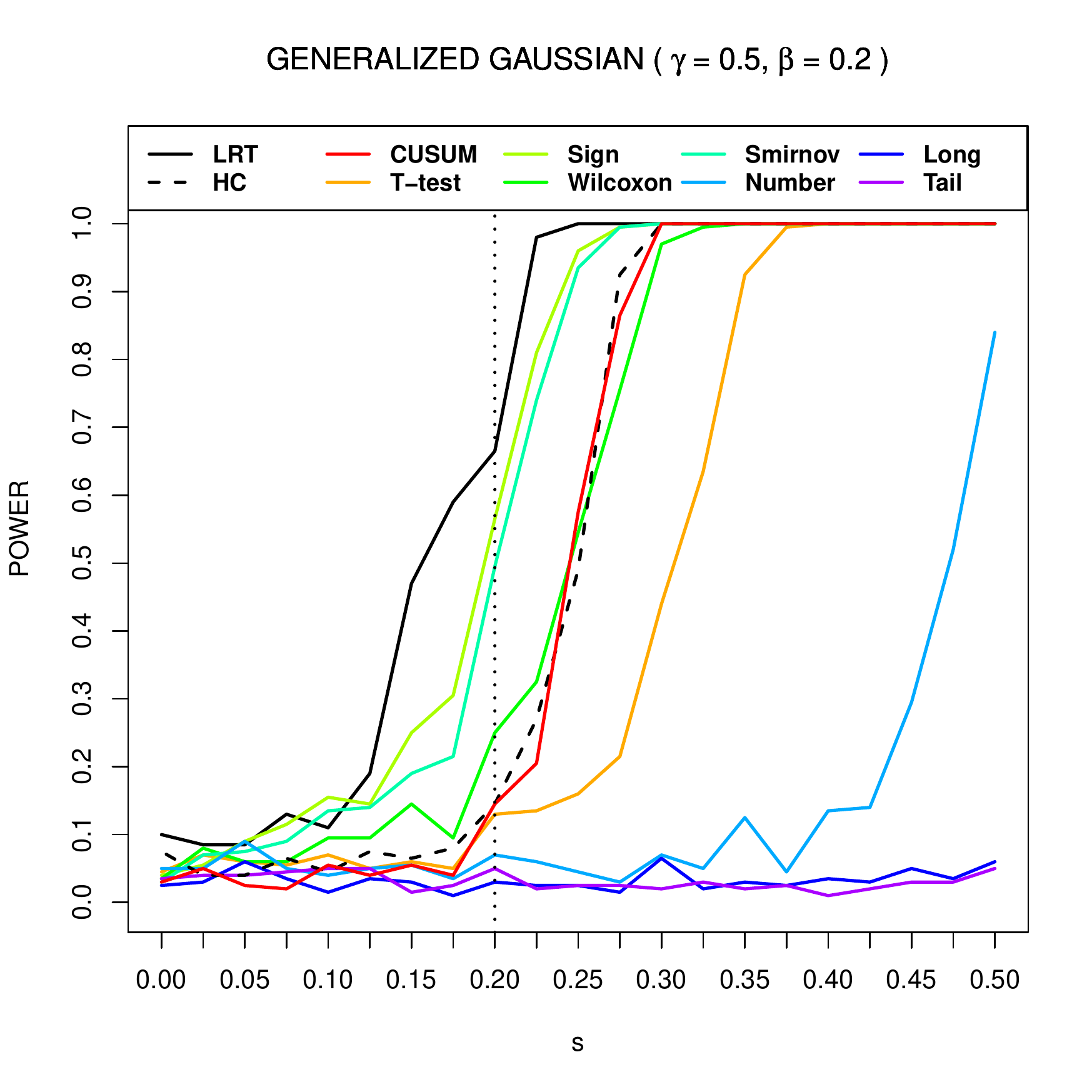}%
\includegraphics[scale=.3]{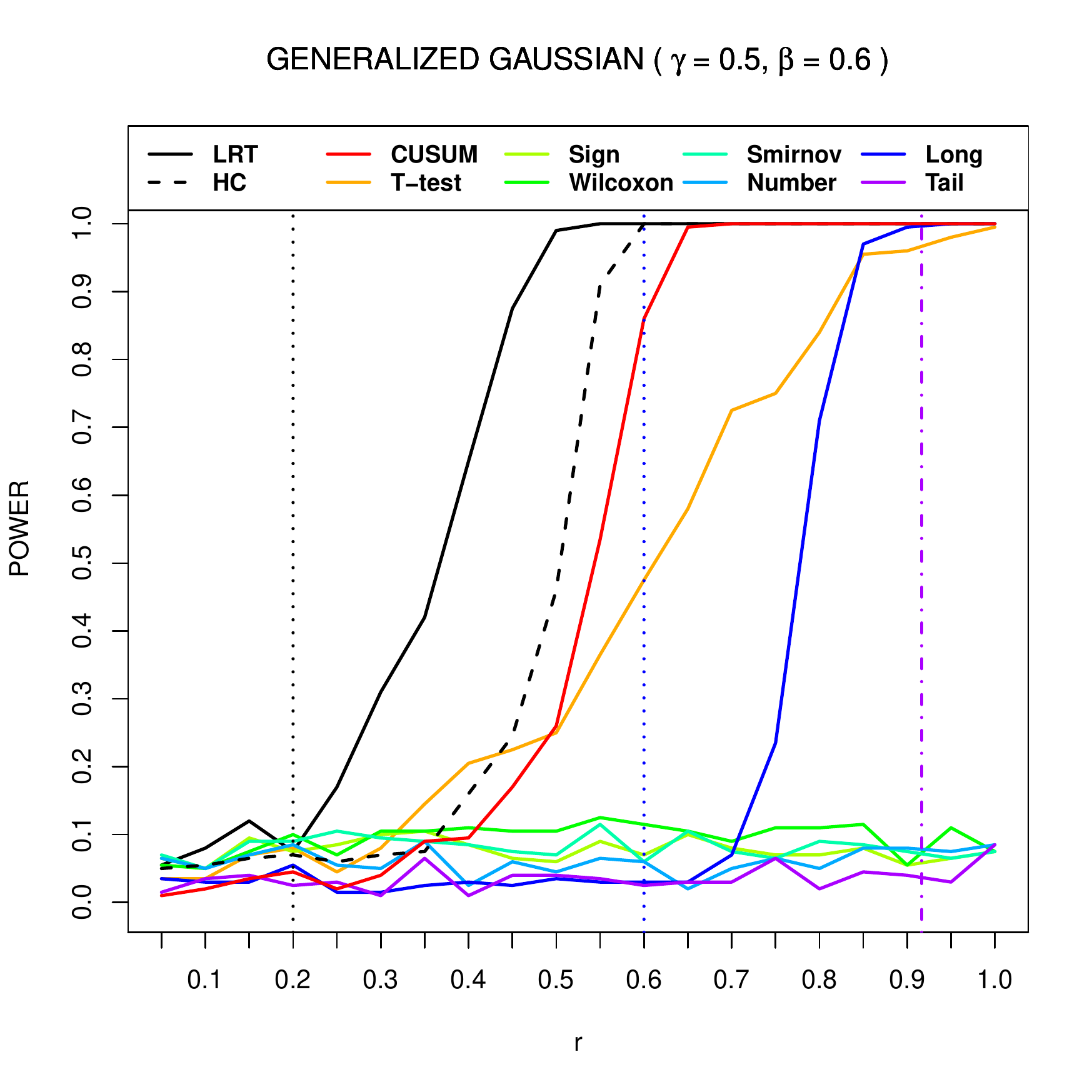}
\caption{Simulation results for the generalized Gaussian mixture model with same parameter $\gamma = 1/2$ in the dense and sparse regimes.
The black vertical line delineates the detection threshold, while the other vertical lines in the sparse regime delineate the respective detection thresholds for the longest-run and tail-run tests, according to color.}
\label{fig:ggmm}
\end{figure}

{\em Dense regime.}
The setting is exactly as in the normal mixture model.
Our theoretical findings were also similar.
The simulations illustrate the theory fairly well, although in this finite-sample situation we observe that the spread in performance is wider than before, with the best performing tests being the sign and Smirnov tests --- not far from the reigning LRT --- ahead of the signed-rank and CUSUM sign tests, very close to the HC,  
and then comes the $t$-test and number-of-runs test quite far behind.  The other two tests have no power.

{\em Sparse regime.}
We set $\beta = 0.6$ and $\mu_n = (r (\log n)/2)^2$ with $r$ ranging from 0 to 1 with increments of 0.05.  
The CUSUM sign test is slightly inferior to the HC, far above the longest-run test, as predicted by our theory.  
The tail-run test has no power here, although the theory says it should have some at $r > 1$.
The $t$-test, surprisingly, dominates the longest-run test.

\subsection{Varying sample size}

In this second set of experiments, we examined various sample sizes to assess the effect of smaller sample sizes on the power of the distribution-free tests in particular.  We focused on the CUSUM sign test and tail-run test, comparing them with the LRT and HC in the normal mixture model.  
%We did not considered the very sparse regime, in which the number of positive is too small at smaller sample sizes.
The simulation results are reported in \figref{normal2}.

\begin{figure}[h!]
\centering
\includegraphics[scale=.3]{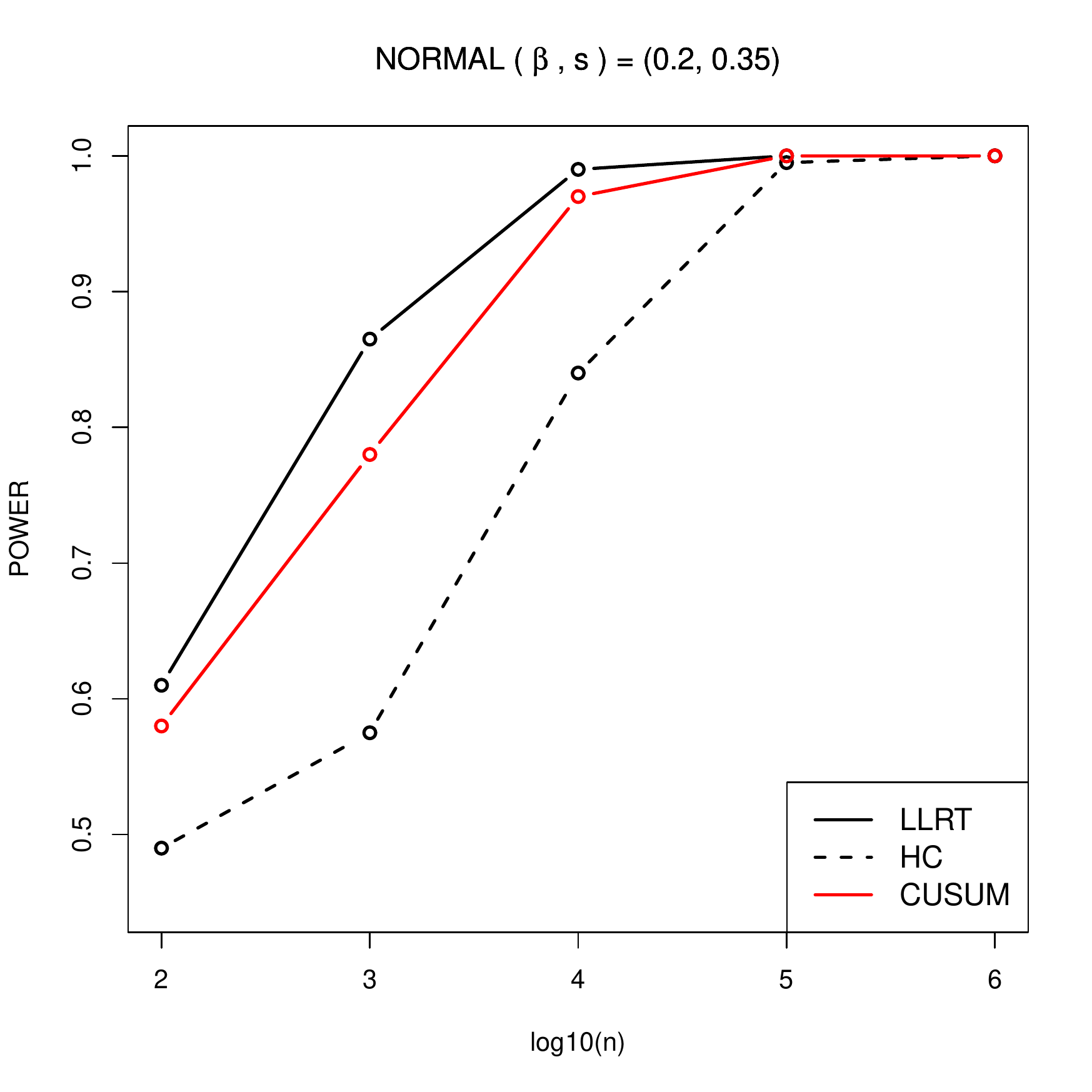}
\includegraphics[scale=.3]{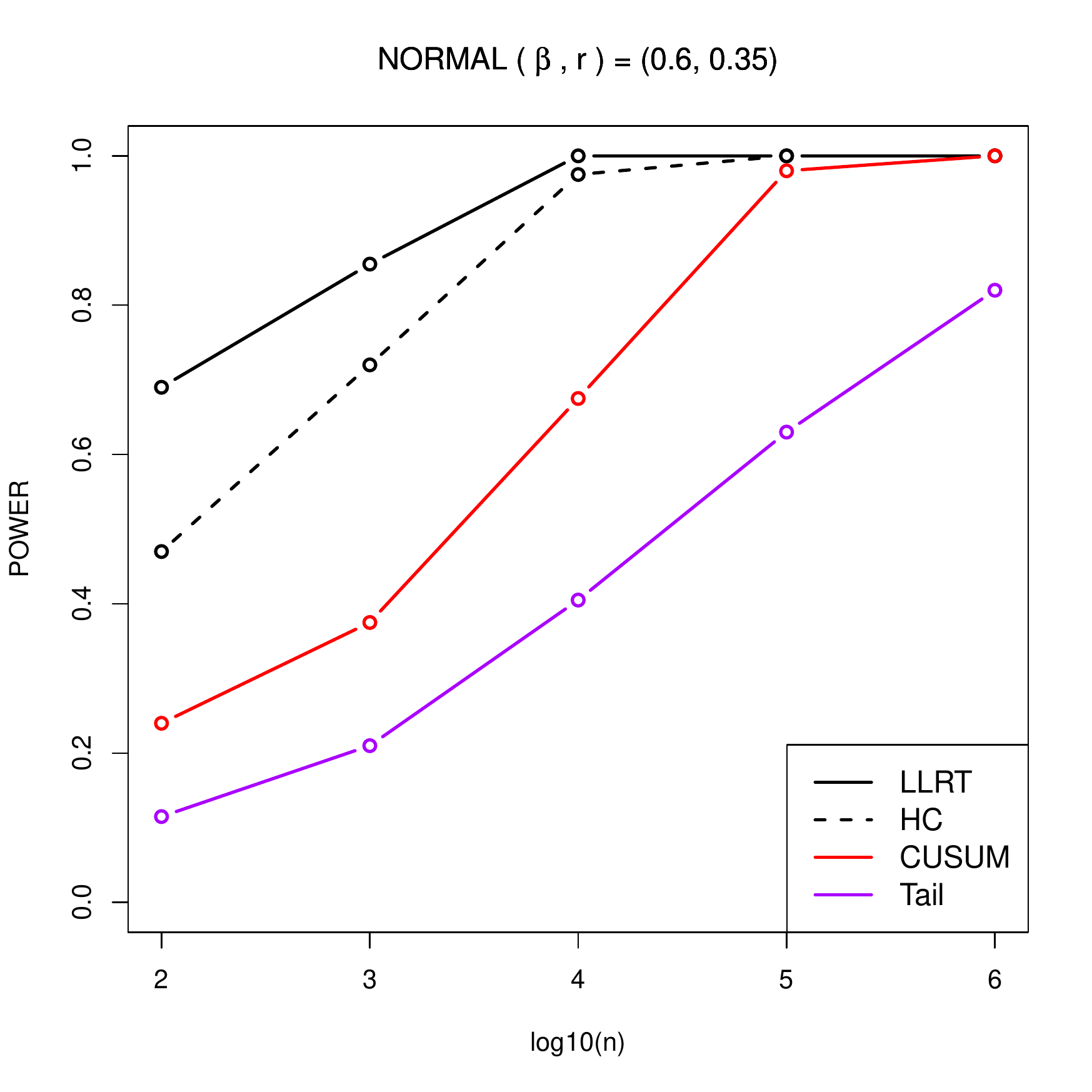}%
\includegraphics[scale=.3]{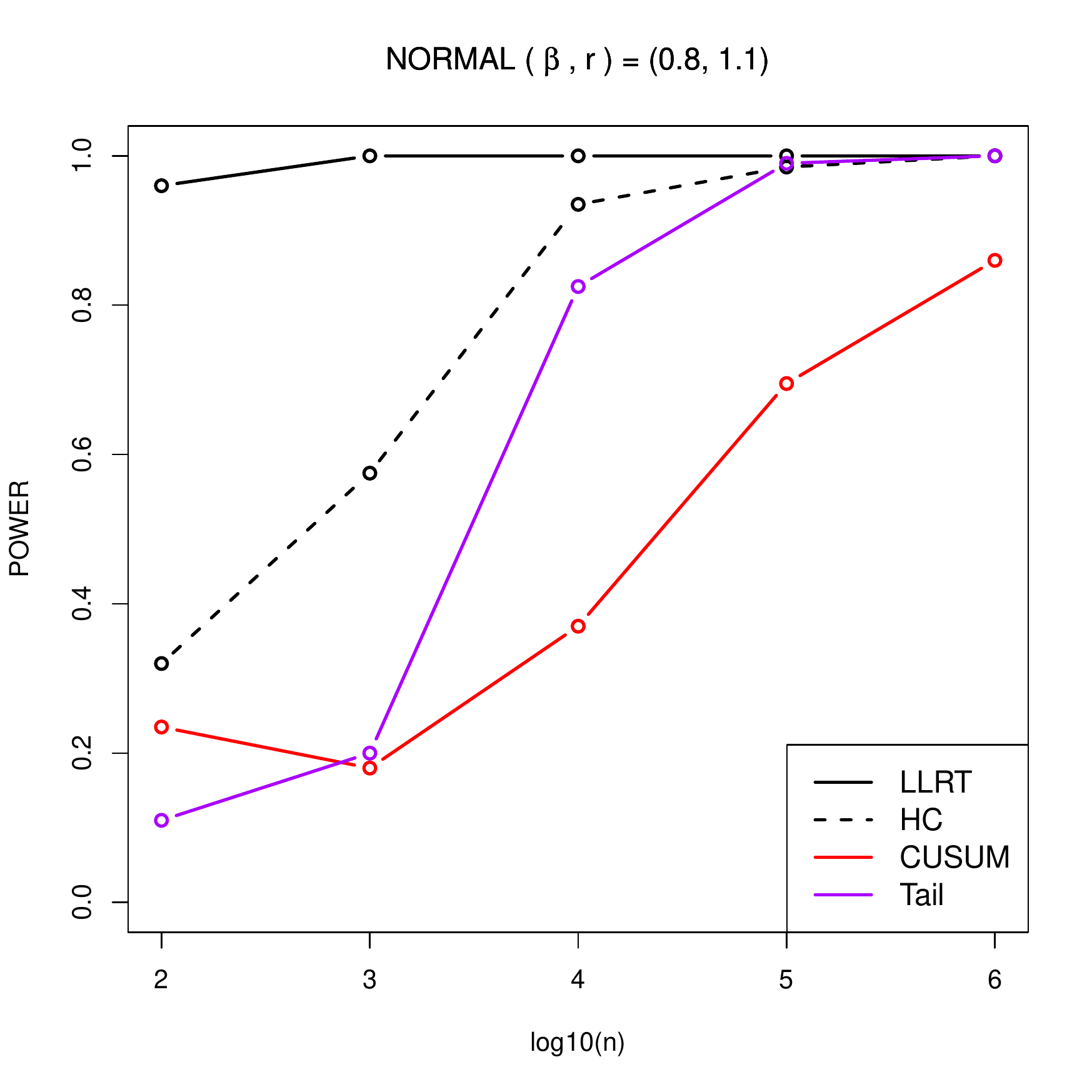}
\caption{Simulation results for the normal mixture model in three specific regimes with varying sample size.}
\label{fig:normal2}
\end{figure}

{\em Dense regime.}
We fixed $(\beta, s) = (0.2, 0.35)$, and chose $n = 10^2, 10^3, 10^4, 10^5$, $10^6$ as sample sizes, with number of positives 40, 251, 1585, $10^4$, 63096 respectively. 
We see that, for all test, the power increases rapidly with the sample size. 

{\em Sparse regimes.} For the moderately sparse regime, we fixed $(\beta, r) = (0.6, 0.35)$, and chose $n = 10^2, 10^3, 10^4, 10^5$, $10^6$, with number of positives $16, 30, 40, 100, 251$, respectively.
For the very sparse regime, we fixed $(\beta, r) = (0.8, 1.1)$, and chose $n = 10^2, 10^3, 10^4, 10^5$, $10^6$, with number of positives $3, 4, 6, 10, 16$, respectively. 
In both cases, the CUSUM sign and the tail-run tests are more affected by the small sample sizes than the LRT or HC.

\section{Discussion}
\label{sec:discussion}

%In the mixture model \eqref{h1}, the higher criticism approach of \cite{dj04} was shown to be near-optimal when $F$ is known to the statistician.  See \citep{cai-12} for fairly general results in the sparse regime.
%In the situation where $F$ is symmetric, we addressed the same testing problem and studied the performance of various distribution-free tests that do not require knowledge of $F$.
%The classical tests for the median --- the $t$-test and sign test --- were shown to be near-optimal in the dense regime $\sqrt{n} \eps_n \to 0$ and to have little or no power in the sparse regime $\sqrt{n} \eps_n \to 0$.
%The signed-rank and Smirnov tests behave similarly.
%The total-number-of-runs test is less powerful, being suboptimal in the dense regime.
%The situation is reversed for the longest-run test, which has little power in the dense regime, but has some power under the sparse regime, and in fact, is near-optimal in the very sparse regime for a generalized Gaussian mixture model with same parameter $\gamma \ge 1$.
%We introduced two new tests.  
%The tail-run test is based on the length of first run and its behavior is similar to that of the longest-run test in a generalized Gaussian mixture model with same parameter $\gamma \ge 1$, although in finite-sample simulations it shows an advantage when $\gamma > 1$.
%When $\gamma < 1$, the longest-run test is strictly more powerful.
%The second test we proposed is the cumulative sum sign test, a normalized version of the Smirnov test.
%It is near-optimal in all models and in all regimes studied here, and performs quite well numerically.

\subsection{Beyond the generalized Gaussian mixture model}
Although we used the generalized Gaussian mixture model as a benchmark for gaging the performance of the various tests studied here, this can be done in much more generality.  Assume $F=G$ for simplicity.
\bitem
\item For the dense regime, if $F$ differentiable and satisfies the conditions of \prpref{lower}, then all tests are asymptotically powerless when $\sqrt{n} \eps_n \mu_n \to 0$.  On the other hand, if $F$ is differentiable at 0 with $F'(0) > 0$, then the CUSUM sign, $t$-, sign, signed-rank and Smirnov tests are all asymptotically powerful when $\sqrt{n} \eps_n \mu_n \to \infty$. 
\item For the sparse regime, all the results apply in the same way if instead of a strict generalized Gaussian distribution \eqref{gg-f} we have $f(x) = f(-x)$ and 
\[
\lim_{x \to \infty} \frac1{x^\gamma} \log f(x) = -\frac1\gamma.
\]
In particular, the CUSUM sign test achieves the detection boundary in all these models, simultaneously.
\eitem

\subsection{Positive and negative effects}
A crucial assumption is that all the effects have same sign (here assumed positive).
When the effects can be negative or positive in the same experiment, then the problem is very different, as the assumption that $F$ is symmetric does not really help, since now the contamination can also be symmetric.
This is for instance the case in the canonical model:
\[
X_1, \dots, X_n \ \iid \ (1-\eps_n) \cN(0,1) + \frac{\eps_n}2 \cN(- \mu_n,1) + \frac{\eps_n}2 \cN(\mu_n,1).
\]
It is known that the detection boundary remains the same for generalized Gaussian mixture models in the sparse regime, and that the higher criticism remains near-optimal.
However, we do not know how to design a near-optimal distribution-free test in such a situation.  
Perhaps there is no distribution-free test that matches the performance of the higher criticism.

\section*{Acknowledgements}

We would like to thank Jason Schweinsberg for helpful discussions.
%The comments of two anonymous referees and an associate editor helped improve the presentation tremendously.
This work was partially supported by a grant from the Office of Naval Research (N00014-09-1-0258).

\bibliographystyle{chicago}
\bibliography{sparse-free.bib}

\setcounter{section}{0}
\setcounter{equation}{0}
\setcounter{thm}{0}
\setcounter{prp}{0}
\setcounter{lem}{0}
\setcounter{cor}{0}

\numberwithin{equation}{section}
\renewcommand{\thethm}{\Alph{section}.\arabic{thm}}
\renewcommand{\theprp}{\Alph{section}.\arabic{prp}}
\renewcommand{\thelem}{\Alph{section}.\arabic{lem}}
\renewcommand{\thecor}{\Alph{section}.\arabic{cor}}

\bigskip
\appendix

\section{Lower bounds} \label{sec:lower} 

We let $\P_0, \E_0, \Var_0$ and $\P_1, \E_1, \Var_1$ denote the probability, expectation and variance under the null and alternative, respectively.

\subsection{Truncated moment method}

\cite{MR1456646} devised a general method for showing that the LRT (and therefore any other test) is asymptotically powerless.  It is based on the first two moments of a truncated likelihood ratio.  It yields the following.

\begin{lem} \label{lem:trunc}
Let $f$ and $g$ denote the densities of $F$ and $G$ with respect to a dominating measure.  Then the hypotheses merge asymptotically when there is a sequence $(x_n)$ such that 
\beq \label{trunc1}
n \bar F(x_n) \to 0, \qquad 
n \eps_n \bar G(x_n -\mu_n) \to 0,
\eeq
and
\beq \label{trunc2}
n \eps_n^2 \left[ \int_{-\infty}^{x_n} \frac{g(x - \mu_n)^2}{f(x)} {\rm d}x - 1\right]_+ \to 0.
\eeq
\end{lem}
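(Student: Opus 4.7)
The plan is to combine a truncation argument with the classical chi-squared second moment method, following \cite{MR1456646}. Introduce the truncated densities $\tilde f_n(x) := f(x) \IND{x \le x_n}/F(x_n)$ and $\tilde h_n(x) := h_n(x) \IND{x \le x_n}/H_n(x_n)$, where $h_n := (1-\eps_n) f + \eps_n g(\cdot-\mu_n)$ and $H_n$ is its CDF. Condition \eqref{trunc1} gives $n \bar F(x_n) \to 0$ and $n \bar H_n(x_n) = n[(1-\eps_n)\bar F(x_n) + \eps_n \bar G(x_n - \mu_n)] \to 0$, so a union bound yields $d_{\rm TV}(F^{\otimes n}, \tilde F_n^{\otimes n}) \to 0$ and $d_{\rm TV}(H_n^{\otimes n}, \tilde H_n^{\otimes n}) \to 0$. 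By the triangle inequality, the task then reduces to showing that $d_{\rm TV}(\tilde F_n^{\otimes n}, \tilde H_n^{\otimes n}) \to 0$.

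Next, I would invoke the Cauchy--Schwarz bound $d_{\rm TV}^2(P, Q) \le \tfrac14 \chi^2(Q\|P)$ together with the tensorization identity $\chi^2(Q^{\otimes n} \| P^{\otimes n}) = (1 + \chi^2(Q\|P))^n - 1$, applied with $P = \tilde F_n$ and $Q = \tilde H_n$. Since $(1+t)^n - 1 \le e^{nt} - 1$, it suffices to show that $n \chi^2(\tilde H_n \| \tilde F_n) \to 0$. A direct expansion of the square yields
\[
\int_{-\infty}^{x_n} \frac{h_n^2}{f} \, dx = (1-\eps_n)^2 F(x_n) + 2(1-\eps_n) \eps_n G(x_n - \mu_n) + \eps_n^2 \kappa_n,
\]
where $\kappa_n := \int_{-\infty}^{x_n} g(x-\mu_n)^2 / f(x) \, dx$. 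Multiplying by $F(x_n)/H_n(x_n)^2$ and subtracting $1$, the cross term cancels against the expansion of $H_n(x_n)^2 = [(1-\eps_n) F(x_n) + \eps_n G(x_n-\mu_n)]^2$, giving the clean identity
\[
\chi^2(\tilde H_n \| \tilde F_n) = \frac{\eps_n^2 \, \bigl[F(x_n) \kappa_n - G(x_n - \mu_n)^2 \bigr]}{H_n(x_n)^2}.
\]

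To finish, I would decompose $F(x_n)\kappa_n - G(x_n-\mu_n)^2 = F(x_n)(\kappa_n - 1) + (F(x_n) - 1) + (1 - G(x_n-\mu_n)^2)$ and bound this from above by $[\kappa_n - 1]_+ + 2\bar G(x_n - \mu_n)$, using $F(x_n) \in [0,1]$ to handle the first two pieces and $1 - G^2 = (1-G)(1+G) \le 2\bar G$ for the third. Since $H_n(x_n) \to 1$, it follows that $n \chi^2(\tilde H_n \| \tilde F_n) \lesssim n \eps_n^2 [\kappa_n - 1]_+ + 2\eps_n \cdot n\eps_n \bar G(x_n - \mu_n) \to 0$, the first term vanishing by \eqref{trunc2} and the second by \eqref{trunc1} (using $\eps_n \le 1$). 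The main obstacle is not conceptual but algebraic: recognizing why the positive part in \eqref{trunc2} is exactly the right formulation. By Cauchy--Schwarz one only has $\kappa_n \ge G(x_n-\mu_n)^2/F(x_n)$, which can dip below $1$, so \eqref{trunc2} usefully constrains $\kappa_n$ only when $\kappa_n \ge 1$; the residual $1 - G(x_n-\mu_n)^2$ must be absorbed separately through the tail condition on $\bar G$. Once this cancellation is correctly tracked, the rest is routine.
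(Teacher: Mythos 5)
Your proof is correct and follows essentially the same truncated second-moment route as the paper's: the paper truncates the likelihood ratio without renormalizing and bounds $\E_0 \tilde L$ from below and $\E_0 \tilde L^2$ from above, while you renormalize the truncated measures and compute $\chi^2(\tilde H_n\|\tilde F_n)$ exactly, but conditions \eqref{trunc1} and \eqref{trunc2} enter in the same roles in both versions. The cancellation you track explicitly (and your observation about why the positive part is the right formulation) is exactly what is hidden in the paper's cruder bound $\E_0(\mathbbm{1}_{A_i}L_i^2)\le 1+\eps_n^2(\kappa_n-1)$ obtained by replacing $F(x_n)$ and $G(x_n-\mu_n)$ by $1$.
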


\begin{proof}
The likelihood ratio is given by
\[
L = \prod_{i=1}^n L_i, \quad L_i := 1-\eps_n + \eps_n \frac{g(X_i-\mu_n)}{f(X_i)}.
\]
The test $\{L > 1\}$ minimizes the risk at 
\[R^* := 1 - \frac12 \E_0|L-1| = 1 - \E_0 (1 - L)_+,\]
where $x_+ := \max(0, x)$ for any $x \in \bbR$.  (Note that $1-R^*$ is the total variation distance between $F$ and $G$.)
We do not work with $L$ directly, but truncate it first.  Define  
\[\tilde{L} = \prod_{i=1}^n L_i \cdot \mathbbm{1}_{A_i}, \quad A_i := \{X_i \leq x_n\}.\]
Using the fact that $\tilde L \le L$ and then the Cauchy-Schwarz inequality, we have
\[R^* \ge 1 - \E_0 (1 - \tilde L)_+ \ge 1 - \sqrt{\E_0 (1 - \tilde L)^2}.\]
And since 
\[\E_0 (1 - \tilde L)^2 = \big[\E_0 \tilde L^2 - 1\big] + 2 \big[1 - \E_0 \tilde L \big],\]
to prove that $R^* \to 1$, it suffices that
\[\E_0 \tilde L \ge 1 + o(1)  \quad \text{ and } \quad \E_0 \tilde L^2 \le 1 + o(1) .\]

For the first moment, we have
\[
\E_0 \tilde L = \prod_{i=1}^n \E_0 (\mathbbm{1}_{A_i} L_i) = \prod_{i=1}^n \E_1 \mathbbm{1}_{A_i} = \prod_{i=1}^n \P_1(A_i) = \P_1(X \le x_n)^n,
\]
with
\[
\P_1(X \le x_n)^n = \big[ (1-\eps_n)F(x_n) + \eps_n G(x_n -\mu_n) \big]^n \to 1,
\]
under \eqref{trunc1}.
Indeed, we use the fact that, for any sequence $(a_n)$ of positive reals, 
\beq \label{an}
a_n^n = \exp(n \log a_n) \to 1 \ \Leftrightarrow \ n \log a_n \sim n(a_n - 1) \to 0,
\eeq 
applying this with $a_n = (1-\eps_n)F(x_n) + \eps_n G(x_n -\mu_n)$, to get
\[
0 \le n(1 - a_n) \le n \bar F(x_n) + \eps_n \bar G(x_n -\mu_n) \to 0, 
\]
by \eqref{trunc1}.

For the second moment,
\[\E_0 \tilde{L}^2 = \prod_{i=1}^n \E_0 (\mathbbm{1}_{A_i} L_i^2) = \left[ \E_0 (\mathbbm{1}_{A_i} L_i^2) \right]^n,\]
with
\begin{align*}
\E_0 (\mathbbm{1}_{A_i} L_i^2) 
&= \int_{-\infty}^{x_n} \big(1-\eps_n + \eps_n \frac{g(x-\mu_n)}{f(x)}\big)^2 f(x) {\rm d}x \\
&\le 1 + a_n, \quad a_n := \eps_n^2 \left[ \int_{-\infty}^{x_n} \frac{g(x-\mu_n)^2}{f(x)} {\rm d}x - 1\right],
\end{align*}
using the fact that both $f$ and $g$ are probability density functions.
We then apply \eqref{an} and use \eqref{trunc2}.  
\end{proof}

\subsection{Proof of \prpref{gg}}
We may assume that $s < 1/2$.  We have
\[
\int \frac{f(x - \mu_n)^2}{f(x)} {\rm d}x - 1 = \int \big[\exp(h_n(x)) -1\big]^2 f(x) {\rm d}x,
\]
where $h_n(x) := \frac1\gamma (|x|^\gamma - |x-\mu_n|^\gamma)$.
When $|x| \le 2 \mu_n$, $h_n(x) = O(\mu_n^\gamma)$, so that 
\[
\int_{-2\mu_n}^{2\mu_n} \big[\exp(h_n(x)) -1\big]^2 f(x) {\rm d}x = O(\mu_n^{1+2\gamma}).
\]
When $|x| > 2 \mu_n$, we have 
\[
|h_n(x)| = \frac{|x|^\gamma}\gamma [1 - (1-\mu_n/|x|)^\gamma] \le \frac{|x|^\gamma}\gamma \cdot 2 \gamma \mu_n/|x| = 2 \mu_n |x|^{\gamma-1}.
\]
Hence, 
\begin{align*}
\int_{|x| > 2\mu_n} \big[\exp(h_n(x)) -1\big]^2 f(x) {\rm d}x 
&\le \int_{|x| > 2\mu_n} \big[\exp(2 \mu_n |x|^{\gamma-1}) -1\big]^2 f(x) {\rm d}x \\
&\asymp \mu_n^2 \int_{|x| > 2\mu_n} |x|^{2 \gamma-2} f(x) {\rm d}x \\
&\asymp a_n := \begin{cases}
\mu_n^2, & \text{if } \gamma > 1/2 \\
\mu_n^2 \log(1/\mu_n), & \text{if } \gamma = 1/2 \\
\mu_n^{1+2\gamma}, & \text{if } \gamma < 1/2.
\end{cases}
\end{align*}
We used dominated convergence in the last line.
Hence, by \lemref{trunc} (with $x_n = \infty$), the hypotheses merge asymptotically when $b_n := n \eps_n^2 (\mu_n^{1+2\gamma} \vee a_n) \to \infty$.  When $\gamma > 1/2$, $b_n = n \eps_n^2 \mu_n^2 = n^{s-\beta} \to 0$ when $s < \beta$.  When $\gamma=1/2$, $b_n = n \eps_n^2 \mu_n^2 \log(1/\mu_n) \asymp n^{s-\beta} \log n \to 0$ when $s < \beta$.  When $\gamma < 1/2$, $b_n = n \eps_n^2 \mu_n^{1+2\gamma} = n^{1-\beta-(1+2\gamma)(s-1/2)} \to 0$ when $s < \frac12 - \frac{1 - 2\beta}{1+ 2\gamma}$. 

We now show that the hypotheses separate completely when $\gamma \ge 1/2$ and $s > \beta$, or when $\gamma < 1/2$ and $s > \frac12 - \frac{1 - 2\beta}{1+ 2\gamma}$. We will show later that several tests (CUSUM, t, sign, signed-rank, Smirnov) are asymptotically powerful in this setting in the former situation, so we focus on the latter.
For this, it suffices to do as \cite{cai-12}, and show that 
\[
n \cH^2\big(f, (1-\eps_n) f + \eps_n f(\cdot-\mu_n)\big) \to \infty,
\]
where $\cH$ denote the Hellinger distance.  
When $\mu_n \le x \le 2\mu_n$, we have $h_n(x) \ge a_n := (2^\gamma-1) \mu_n^\gamma/\gamma.$
Hence,
\begin{align*}
& \cH^2\big(f, (1-\eps_n) f + \eps_n f(\cdot-\mu_n)\big) \\
&= \int \left[\sqrt{1 + \eps_n [\exp(h_n(x)) -1]} - 1\right]^2 f(x) {\rm d}x \\
&\ge \int_{\mu_n}^{2\mu_n} \left[\sqrt{1 + \eps_n a_n/\gamma} - 1\right]^2 f(x) {\rm d}x 
\asymp \eps_n^2 a_n^2 \mu_n \asymp \eps_n^2 \mu_n^{1 + 2 \gamma}.
\end{align*}
The result comes from that.

\subsection{A general information bound for the dense regime} \label{sec:lb}

The following result does not require symmetry.  Note that $(F,G,\eps_n,\mu_n)$ below are implicitly known.

\begin{prp} \label{prp:lower}
Assume that $\sqrt{n} \eps_n \to \infty$.  When $F \ne G$, then there is a test that asymptotically separates $H_0^n$ and $H_1^n$.  When $F=G$, assume that $F$ is symmetric about 0 and has a differentiable density $f$ that satisfies $f(x-\mu) = f(x) - \mu f'(x) + \mu^2 h(x,\mu)$ for all $\mu \ge 0$ and all $x \in \bbR$, with    
\[
\int \frac{f'(x)^2}{f(x)} {\rm d}x <\infty, \quad \sup_{\mu\ge0} \int \frac{h(x,\mu)^2}{f(x)} {\rm d}x < \infty.
\]
Then the hypotheses are asymptotically inseparable if $\sqrt{n} \eps_n \mu_n \to 0.$
\end{prp}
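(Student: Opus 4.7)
The proposition splits into two independent parts depending on whether $F \ne G$ or $F = G$; my plan treats them in turn.

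\textbf{Part 1: $F \ne G$.} I will show that the Kolmogorov--Smirnov test, which rejects for large values of $D_n := \sqrt{n} \sup_x |F_n(x) - F(x)|$, asymptotically separates $H_0^n$ from $H_1^n$. Under $H_0^n$, $D_n = O_P(1)$ by Donsker's theorem. Under $H_1^n$, $\E_1 F_n(x) = (1 - \eps_n) F(x) + \eps_n G(x - \mu_n)$, so a bias--variance split gives $\sup_x |F_n(x) - F(x)| \ge \eps_n \sup_x |G(x - \mu_n) - F(x)| - O_P(n^{-1/2})$. Since $\sqrt{n}\eps_n \to \infty$, it suffices to show $\liminf_n \sup_x |G(x - \mu_n) - F(x)| > 0$, which I would prove by subsequences: extract any subsequence along which $\mu_n \to \mu^\star \in [-\infty, \infty]$. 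If $|\mu^\star| = \infty$ the sup tends to $1$; if $\mu^\star \in \bbR$, then $G(\cdot - \mu_n)$ converges uniformly to $G(\cdot - \mu^\star)$ by continuity of $G$, and $G(\cdot - \mu^\star) \ne F$, because either $\mu^\star = 0$ and the hypothesis $F \ne G$ applies, or $\mu^\star \ne 0$ and the two CDFs have different (unique) medians $0$ and $\mu^\star$.

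\textbf{Part 2: $F = G$.} First note that $\sqrt{n}\eps_n \to \infty$ together with $\sqrt{n}\eps_n\mu_n \to 0$ forces $\mu_n \to 0$, which I will use critically. My approach is the standard second-moment argument applied to the likelihood ratio $L = \prod_i L_i$ with $L_i = 1 - \eps_n + \eps_n f(X_i - \mu_n) / f(X_i)$. One has $\E_0 L = 1$ identically and a direct computation gives $\E_0 L^2 = (1 + \eps_n^2 \chi_n^2)^n$, where $\chi_n^2 := \int [f(x - \mu_n)/f(x) - 1]^2 f(x)\,dx$. Substituting the Taylor expansion $f(x - \mu_n) - f(x) = -\mu_n f'(x) + \mu_n^2 h(x, \mu_n)$ and applying Minkowski's inequality with the weighted $L^2$-norm $\|\phi\|^2 := \int \phi^2/f$ yields $\sqrt{\chi_n^2} \le \mu_n \sqrt{I} + \mu_n^2 \sqrt{H}$, where $I := \int (f')^2/f$ and $H := \sup_{\mu \ge 0} \int h(\cdot,\mu)^2/f$ are both finite by hypothesis. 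Since $\mu_n \to 0$ this gives $\chi_n^2 = O(\mu_n^2)$, and therefore $n\eps_n^2 \chi_n^2 = O((\sqrt{n}\eps_n\mu_n)^2) \to 0$, so $\E_0 L^2 \to 1$. Combined with $\E_0 L = 1$ this yields $\E_0 (L - 1)^2 \to 0$, whence $\E_0 |L - 1| \to 0$, i.e., $\|\P_0 - \P_1\|_{TV} \to 0$, proving inseparability.

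\textbf{Main obstacle.} The delicate step is the uniform lower bound in Part 1 when $F$ or $G$ fails to be strictly increasing at its median, since then medians need not be unique and the subsequence argument above must be refined, for instance by evaluating at a specific $x_0$ where $F(x_0) \ne G(x_0)$ and exploiting uniform convergence of $G(\cdot - \mu_n)$ on compacts. Part 2, by contrast, is a routine $\chi^2$ computation once the observation $\mu_n \to 0$ is in hand --- it is precisely this observation that prevents the quartic term in the Taylor bound on $\chi_n^2$ from dominating the quadratic one.
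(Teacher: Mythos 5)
Your proof is correct and, on the second half, essentially coincides with the paper's. For $F=G$ the paper invokes its truncated second-moment lemma with $x_n=\infty$, which degenerates to exactly your plain second-moment computation: the quantity the paper bounds, $\int f(x-\mu_n)^2/f(x)\,{\rm d}x-1$, is your $\chi_n^2$, and the paper expands it term by term with Cauchy--Schwarz where you apply Minkowski in the weighted $L^2$ norm --- a minor streamlining, same content. Your observation that $\sqrt{n}\eps_n\to\infty$ and $\sqrt{n}\eps_n\mu_n\to 0$ force $\mu_n\to 0$ is indeed the step that keeps the $\mu_n^3$ and $\mu_n^4$ terms subordinate; the paper uses this implicitly without stating it.

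For $F\ne G$ you take a genuinely different route. The paper also extracts a subsequence with $\mu_n\to\mu\in[0,\infty]$, but then builds an elementary counting test: $Q=\#\{i:X_i\ge\mu_n\}$ when $\mu=\infty$, and $Q=\#\{i:X_i\in A\}$ for a fixed set $A$ with $F(A)<G(A-\mu)$ when $\mu<\infty$, separating the hypotheses via a mean--variance (Chebyshev) comparison. Your Kolmogorov--Smirnov argument with the DKW bound buys a single explicit statistic and avoids constructing the set $A$, at the cost of importing a uniform empirical-process bound; the paper's version needs nothing beyond Chebyshev. Both arguments meet the same obstacle at the same place: ruling out $G(\cdot-\mu)=F$ for $\mu>0$. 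The paper dismisses this with the parenthetical that ${\rm med}(G(\cdot-\mu))=\mu\ne 0={\rm med}(F)$, which tacitly assumes the medians are unique --- under \ref{a1} alone one can build a continuous symmetric $F$ that is flat at level $1/2$ on an interval and a $G$ with $G(\cdot-\mu)=F$, in which case the mixture equals the null and no test separates. So the gap you flag as the ``delicate step'' is present in the paper's own proof as well, and is a deficiency of the statement's hypotheses rather than of either argument; your proposed fix (evaluating at a point where $F(x_0)\ne G(x_0)$) handles $\mu^\star=0$ but not $\mu^\star>0$, exactly as the paper's does not.
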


Compare with the performance bounds obtained for the CUSUM sign test, the $t$-test, the sign test, signed-rank test, and the Smirnov test, which were shown to be asymptotically powerful when $\sqrt{n} \eps_n \mu_n \to \infty$ under mild additional conditions. 
Note that \prpref{lower} is strong enough to imply \prpref{gg} when $\gamma \ge 2$.  

\begin{proof}
First assume that $F \ne G$.  Extracting a subsequence if needed, we may assume that $\mu_n \to \mu$ for some $\mu \in [0, \infty]$.  If $\mu = \infty$, then consider the test that rejects when $Q := \# \{i:X_i \ge \mu_n\}$ is too large.  
%Under the null, $Q \sim \Bin(n, \bar F(\mu_n))$.  Under the alternative, $Q \sim \Bin(n, (1-\eps_n) \bar F(\mu_n) + \eps_n/2)$, using the fact that $G$ has zero median.
We have
\[
\E_0 Q = n \bar F(\mu_n), \quad \Var_0(Q) \le n/4,
\]
and using the fact that $G$ has zero median,
\[
\E_1 Q = n [(1-\eps_n) \bar F(\mu_n) + \eps_n/2], \quad \Var_1(Q) \le n/4.
\]
Therefore,
\[
\frac{\E_1 Q - \E_0 Q}{\sqrt{\Var_0(Q) \vee \Var_1(Q)}} \ge \sqrt{n} \eps_n [1 - 2\bar F(\mu_n)] \to \infty,
\]
and we conclude with \lemref{basic} that there is a test based on $Q$ that is asymptotically powerful.  If $\mu < \infty$, let $A$ be a measurable subset of $\bbR$ such that $F(A) < G(A-\mu)$.
This is possible since $F \ne G(\cdot -\mu)$.  (If $\mu \ne 0$, this comes from the fact that ${\rm med}(F) = 0$ while ${\rm med}(G(\cdot -\mu)) = \mu$.)  We then consider the test based on $Q := \# \{i:X_i\in A\}$ and reason as above.  We have 
\[\E_1 Q - \E_0 Q = n \eps_n [G(A-\mu_n) - F(A)] \sim n \eps_n [G(A - \mu) - F(A)],\]
and $\Var_0(Q) \vee \Var_1(Q) \le n/4$, so that 
\[
\frac{\E_1 Q - \E_0 Q}{\sqrt{\Var_0(Q) \vee \Var_1(Q)}} \ge (1 + o(1)) \frac{n \eps_n [G(A - \mu) - F(A)]}{\sqrt{n/4}} \asymp \sqrt{n} \eps_n \to \infty.
\]

We now assume that $F=G$.  We first note that $f'$ is integrable since, by the Cauchy-Schwarz inequality,
\[
\int |f'(x)| {\rm d}x \le \sqrt{\int \frac{f'(x)^2}{f(x)} {\rm d}x \cdot \int f(x) {\rm d}x} < \infty.
\]
Then, because $f$ is even, $f'$ is odd, and therefore $\int f'(x) {\rm d}x = 0$.
We have
\begin{align*}
\int \frac{f(x-\mu_n)^2}{f(x)} {\rm d}x - 1 
&= \int \frac{\big[f(x) -\mu_n f'(x) + \mu_n^2 h(x,\mu_n) \big]^2}{f(x)} {\rm d}x - 1 \\
&= \mu_n^2 \int \frac{f'(x)^2}{f(x)} {\rm d}x + 2 \mu_n^2 \int h(x,\mu_n) {\rm d}x \\
& \quad + \mu_n^4 \int \frac{h(x,\mu_n)^2}{f(x)} {\rm d}x - 2 \mu_n^3 \int \frac{f'(x) h(x,\mu_n)}{f(x)} {\rm d}x,
\end{align*}
with, by the Cauchy-Schwarz inequality, 
\[
\sup_n \left| \int h(x,\mu_n) {\rm d}x \right| \le \sqrt{\sup_{\mu \ge 0} \int \frac{h(x,\mu)^2}{f(x)} {\rm d}x} < \infty,
\]
and
\[
\sup_n \left|\int \frac{f'(x) h(x,\mu_n)}{f(x)} {\rm d}x\right| \le \sqrt{\int \frac{f'(x)^2}{f(x)} {\rm d}x \cdot \sup_{\mu \ge 0} \int \frac{h(x,\mu)^2}{f(x)} {\rm d}x} < \infty.
\]
Hence,
\[
n \eps_n^2 \left[\int \frac{f(x-\mu_n)^2}{f(x)} {\rm d}x - 1\right]^2 = O(n \eps_n^2 \mu_n^2),
\]
and we conclude with \lemref{trunc} (with $x_n = \infty$). 
\end{proof}

\subsection{Generalized Gaussian mixture model (different parameters)} 
\label{sec:expo2}

Suppose $F$ and $G$ are generalized Gaussian with parameters $\gamma \ne \eta$.  By \prpref{lower}, in the dense regime the two hypotheses $H_0^n$ and $H_1^n$ are asymptotically separable, so we focus on the sparse regime where $\eps_n = n^{-\beta}$ with $1/2 < \beta < 1$.  

{\em Case $\gamma > \eta$.}
Here, $g$ has heavy tails compared to $f$, so much so that the max test --- which rejects for large values of $\max_i X_i$ --- is asymptotically powerful as soon as $\beta < 1$, even if $\mu_n = 0$.  
Indeed, with high probability under the null, $\max_i X_i \le 2 (\gamma \log n)^{1/\gamma} \asymp (\log n)^{1/\gamma}$, while under the alternative (with $\mu_n=0$), at least $n \eps_n/2$ points are sampled from $G$, and the maximum of them exceeds $\frac12 (\eta \log (n \eps_n/2))^{1/\eta} \asymp (\log n)^{1/\eta}$.

{\em Case $\gamma < \eta$.}
Here, $g$ has lighter tails than $f$, and as a consequence, the max test has very little power.  This situation is more interesting.  Following standard lines, we obtain the following.  

\begin{prp} \label{prp:gg-diff}
Suppose $F$ and $G$ are generalized Gaussian with parameters $\gamma < \eta$, and that we parameterize $\eps_n$ and $\mu_n$ as in \eqref{eps} and \eqref{mu-gamma}.  Then the hypotheses merge asymptotically when $r < 2 \beta - 1$. 
\end{prp}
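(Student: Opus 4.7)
\textbf{Proof plan for Proposition \ref{prp:gg-diff}.} The strategy is to apply the truncated moment method of Lemma \ref{lem:trunc} with a carefully chosen truncation level. Set $x_n = (\gamma q \log n)^{1/\gamma}$ for a fixed $q > 1$. Using the standard tail bound $\bar F(x) \asymp (1+x)^{1-\gamma} f(x)$ for the generalized Gaussian with parameter $\gamma$, we have $n \bar F(x_n) \approx n^{1-q} \to 0$. Since $r < 1 < q$, we get $x_n - \mu_n \asymp (\log n)^{1/\gamma}$, and because $\eta > \gamma$, the quantity $(x_n - \mu_n)^\eta \asymp (\log n)^{\eta/\gamma}$ has exponent strictly greater than $1$. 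Hence $\bar G(x_n - \mu_n)$ decays faster than any polynomial in $n$, and the second part of \eqref{trunc1} holds trivially.

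The heart of the argument is to estimate
\[
I_n := \int_{-\infty}^{x_n} \frac{g(x-\mu_n)^2}{f(x)} \, dx
\]
and show $I_n \approx n^r$. Changing variables to $y = x - \mu_n$ and absorbing the normalizing constants of $f,g$ into a harmless factor, we reduce to controlling $\int_{-\infty}^{x_n - \mu_n} \exp(\Phi_n(y)) \, dy$, where
\[
\Phi_n(y) := \tfrac{1}{\gamma}|y + \mu_n|^\gamma - \tfrac{2}{\eta}|y|^\eta.
\]
Because $\eta > \gamma$ and $\mu_n$ is polylogarithmic in $n$, the penalty $-\tfrac{2}{\eta}|y|^\eta$ dominates once $|y|$ exceeds any sub-polynomial polylog scale, so the mass of the integrand is concentrated in a region where $|y|$ is polylog in $n$. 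For such $y$, the Taylor expansion $\tfrac{1}{\gamma}|y+\mu_n|^\gamma = \tfrac{1}{\gamma}\mu_n^\gamma + y \mu_n^{\gamma-1} + O(y^2 \mu_n^{\gamma-2})$ together with $\mu_n^\gamma/\gamma = r \log n$ gives a leading factor of $n^r$ multiplied by $\int \exp(y \mu_n^{\gamma-1} - \tfrac{2}{\eta}|y|^\eta + o(1)) \, dy$, which is sub-polynomial in $n$. Hence $I_n \approx n^r$.

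Inserting this estimate into \eqref{trunc2} yields $n\eps_n^2 [I_n - 1]_+ \approx n^{1 - 2\beta + r}$, which tends to $0$ precisely when $r < 2\beta - 1$. Lemma \ref{lem:trunc} then yields the asymptotic merging of $H_0^n$ and $H_1^n$.

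The main technical obstacle is the Laplace-type estimate for $I_n$: one must split the range of $y$ into a bounded region $|y| \le K$, an intermediate region $K < |y| \le (\log n)^\alpha$, and a far tail $|y| > (\log n)^\alpha$, and bound each using the monotonicity of $\Phi_n$ at infinity combined with $|y+\mu_n|^\gamma \le 2^\gamma(|y|^\gamma + \mu_n^\gamma)$. The crucial quantitative input is that $\eta > \gamma$ forces the maximizer $y_n^\star$ of $\Phi_n$ to satisfy $y_n^\star \asymp \mu_n^{(\gamma-1)/(\eta-1)}$, which is polylogarithmic with exponent $< 1/\gamma$, so it only contributes a $(\log n)^w$ correction on top of the leading $n^r$ coming from $\exp(\mu_n^\gamma/\gamma)$.
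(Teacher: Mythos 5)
Your proposal is correct and follows essentially the same route as the paper: apply the truncated second-moment method of \lemref{trunc} with a truncation level $x_n \asymp (\log n)^{1/\gamma}$ exceeding $\mu_n$, and show that the second-moment integral is $n^{r+o(1)}$ because the exponent $|x|^\gamma/\gamma - 2|x-\mu_n|^\eta/\eta$ peaks near $x = \mu_n$ at height $\sim \mu_n^\gamma/\gamma = r\log n$ (so, strictly, the correction factor is $n^{o(1)}$ rather than polylogarithmic, but this does not affect the conclusion since $r < 2\beta-1$ is strict). The paper avoids your full Laplace-type region splitting by simply bounding the integral over $[0,x_n]$ by $x_n \cdot \exp(\sup_x h_n(x))$ after showing the maximizer $z_n \sim \mu_n$, which suffices here.
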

This coincides with the detection boundary when $F=G$ is generalized Gaussian with exponent $\gamma \le 1$.  We note that the result is sharp.  For example, the CUSUM sign test achieves this detection boundary.  (We invite the reader to verify this based on \prpref{cusum}.)

\begin{proof}
We want to apply \lemref{trunc}.  
The first condition in \eqref{trunc1} holds when $x_n \geq 2 (\gamma \log n)^{1/\gamma}$, while the second condition in \eqref{trunc1} is fulfilled when $x_n \geq \mu_n + 2 (\eta \log (n\eps_n))^{1/\eta} \sim \mu_n = (\gamma r \log n)^{1/\gamma}$.
Hence, the choice $x_n = 2 (\gamma \log n)^{1/\gamma}$ is valid.
%We will fix $\delta > 0$ small later on.

We now turn to \eqref{trunc2}, where $g(x - \mu_n)^2/f(x) \propto \exp(h_n(x))$, with $h_n(x) := -2 |x-\mu_n|^{\eta}/\eta + |x|^{\gamma}/\gamma$. 
For $x \le 0$, we have 
\[
\int_{-\infty}^0 \exp(h_n(x)) {\rm d}x < \int_{-\infty}^0 \exp\big(-2|x|^\eta/\eta + |x|^\gamma/\gamma\big) {\rm d}x < \infty,
\]
because $\gamma < \eta$.
We therefore focus $x > 0$.
%There, $h_n(x)$ is differentiable everywhere except at $x = \mu_n$.
We see that $h_n$ is increasing over $(0,z_n)$ and decreasing over $(z_n, \infty)$, where $z_n$ be the (unique) root of $h_n'(x) = 0$ over $(0, \infty)$, specifically, $z_n > \mu_n$ satisfies $z_n^{\gamma-1} = 2 (z_n -\mu_n)^{\eta-1}$.
Expressing $z_n$ as $z_n = (\gamma t_n \log n)^{1/\gamma}$ for some $t_n > r$, we have
\[
(\gamma t_n \log n)^{\frac{\gamma - 1}{\gamma}} = 2(\gamma \log n)^{\frac{\eta-1}{\gamma}} (t_n^{\frac1{\gamma}} - r^{\frac1{\gamma}})^{\eta -1}.
\]
Since $\eta > \gamma$, we necessarily have $t_n \to r$ and thus $z_n \sim \mu_n$. Hence, we have 
\[
h_n(z_n) = -2\frac{(z_n -\mu_n)^{\eta}}{\eta} + \frac{z_n^{\gamma}}{\gamma} \leq \frac{z_n^{\gamma}}{\gamma} \sim \frac{\mu_n^{\gamma}}{\gamma} = r \log n,
\]
leading to
\begin{align*}
n \eps_n^2 \cdot \int_0^{x_n} \exp(h_n(x)) {\rm d}x 
&\le n^{1-2\beta} \cdot x_n \cdot \exp(h_n(z_n)) \\
&\asymp (\log n)^{1/\gamma} n^{r +o(1) + 1-2\beta} \to 0,
\end{align*}
when $r < 2\beta - 1$.
\end{proof}

\section{Performance bounds}
\label{sec:perf}

We let $\P_1$, $\E_1$ and $\Var_1$ denote the probability, expectation and variance under the mixture model \eqref{h1}.  The corresponding notation for the null distribution \eqref{h0} --- corresponding to \eqref{h1} with $\eps_n = 0$ --- is $\P_0$, $\E_0$ and $\Var_0$.  

\subsection{Proof of \prpref{cusum}}

By \citep{MR0074712}, under the null, $M/\sqrt{2 \log\log(n)} \to_P 1$. 
Define $\omega_n = 2\sqrt{\log\log(n)}$, so that $\P_0(M \geq \omega_n) \to 0$, as $n \to \infty$.

First, assume that \eqref{prp_cusum1} holds.  Recall the definition of $S_n$ in \eqref{smv}.
Since $\E_1 S_n = n \eps_n (1 - 2G(-\mu_n))$ and $\Var_1(S_n) \le n$, we have $\E_1 S_n \gg \sqrt{\Var_1(S_n)}$ by \eqref{prp_cusum1}.
Hence, by Chebyshev's inequality, $S_n \ge \frac12 \E_1 S_n$ with probability tending to one, implying that 
\[M \ge \frac1{\sqrt{n}} S_n \ge \frac1{2 \sqrt{n}} \E_1 S_n \ge \frac12 \sqrt{n} \eps_n (1 - 2G(-\mu_n)) \gg \omega_n,\] 
by \eqref{prp_cusum1}.
Therefore the test is asymptotically powerful.

Finally, assume that \eqref{prp_cusum3} holds.
Let $N^+ = \# \{i: X_i > x_n\}$, $N^- = \# \{i: X_i < -x_n\}$ and $N = N^+ + N^-$.
We have $M \ge (N^+ - N^-)/\sqrt{N}$, with $N^\pm \sim \Bin(n, p^\pm)$ and $N \sim \Bin(n,p)$, where $p^+ := (1-\eps_n)\bar{F}(x_n) + \eps_n \bar{G}(x_n-\mu_n)$, $p^- := (1-\eps_n)F(-x_n) + \eps_n G(-x_n-\mu_n)$ and $p := p^+ + p^-$.  
Let $a_n = n (1-\eps_n) \bar{F}(x_n)$, $b^+_n = n \eps_n \bar{G}(x_n-\mu_n)$ and $b^-_n = n G(-x_n-\mu_n)$.
We have 
\[N \sim_P np = 2 a_n + b^+_n + b^-_n \to \infty,\]
since $\sqrt{b_n^+} \ge (b^+_n - b^-_n)/\sqrt{a_n + b^+_n + b^-_n} \to \infty$, where the divergence is due to \eqref{prp_cusum3}.
We also have
\[
\E_1 (N^+ - N^-) = n (p^+ - p^-) = b^+_n - b^-_n,
\]
and since $N^+ | N \sim \Bin(N, q)$, with $q := p^+/p$, by the law of total variance, 
\begin{align*}
\Var_1(N^+ - N^-) 
&= \Var_1\big(\E_1[2 N^+ - N|N]\big) + \E_1\big[\Var_1(2 N^+-N|N)\big] \\
&= \Var_1( (2 q - 1) N) + \E_1[4 N q(1-q)]  \\
&= (2q - 1)^2 n p(1-p) + 4 q(1-q) n p \\
&\le 2 n p = 4 a_n + 2 (b^+_n + b^-_n).
\end{align*}
Hence, by Chebyshev's inequality, $N^+ - N^- = b^+_n - b^-_n + O_P(\sqrt{a_n + b^+_n + b^-_n})$.
We therefore have
\begin{align*}
\frac{N^+ - N^-}{\sqrt{N}}
& = \frac{b^+_n - b^-_n + O_P(\sqrt{a_n + b^+_n + b^-_n})}{\sqrt{(1+o_P(1)) (2 a_n + b^+_n + b^-_n)}} \\
&= (1+o_P(1)) \frac{b^+_n - b^-_n}{\sqrt{a_n+b^+_n + b^-_n}} + O_P(1) \gg \omega_n,
\end{align*}
by \eqref{prp_cusum3}.

\subsection{Proof of \prpref{tail}}

We first show that the tail-run test is asymptotically powerful when \eqref{prp_tail1} holds.  
Since $\tail = O_P(1)$ under the null, it suffices to show that $\tail \to \infty$ under the alternative.
We first note that
\begin{align*}
\P(\max\{|X_i| : X_i < 0\} > x_n) 
&= \P(\min_i X_i < -x_n) \\
&\le n \big( (1-\eps_n)F(-x_n) + \eps_n G(-x_n-\mu_n)\big) \to 0,
\end{align*}
by the union bound, the first two conditions in \eqref{prp_tail1} and the fact that $F$ is symmetric.
Therefore, $\tail \ge N := \# \{i: X_i > x_n\}$ with high probability.
Now, $N \sim \Bin(n, p)$ where $p := (1-\eps_n)\bar{F}(x_n) + \eps_n \bar{G}(x_n-\mu_n)$, so that $N \to \infty$, since $n p \to \infty$ by the third condition in \eqref{prp_tail1}.   

Next, we show that the test is asymptotically powerless when \eqref{prp_tail2} holds.
For this, we need to show that $\tail$ is asymptotically stochastically bounded by ${\rm Geom}(1/2)$.
We do so by showing that $\tail \le L^0 + o_P(1)$, where $L^0$ is the length of the tail-run ignoring the true positive effects.  (Note that $L^0 \sim \Geom(1/2)$.) 
Under the alternative, $X_1, \dots, X_n$ may be generated as follows.  First, let $B_1,\dots,B_n$ be i.i.d.~Bernoulli with mean $\eps_n$, and then draw $X_i$ from $F$ (resp.~$G(\cdot-\mu_n)$) if $B_i = 0$ (resp.~1).   
Let $I_0 = \{i:B_i=0\}$ and $I_1 = \{i: B_i=1\}$.
By the second condition in \eqref{prp_tail2}, we have $\max_{i\in I_1} X_i \le x_n$ with probability tending to one.
Assume this is the case.
Let $N_n^- = \#\{i \in I_0: X_i < -x_n\}$ and $N_n^+ = \#\{i \in I_0: X_i > x_n\}$.  
These are binomial random variables with 
$\E N_n^\pm = n (1-\eps_n) \bar{F}(x_n) \to \infty,$
by the first condition in \eqref{prp_tail1}, so that $N_n^\pm \to \infty$ by Chebyshev's inequality. 
So, with high probability, there is an observation $X_i < 0$ such that $|X_i| > x_n$, which therefore bounds the largest positive effect.
In that case, $\tail$ is bounded by the length of the tail-run of positive signs in $\{i \in I_0: X_i > x_n\}$, which is equal to $L_0$.
We conclude that, indeed, $\tail \le L_0$ with probability tending to one.

\subsection{Moment method for analyzing a test}

We state and prove a general result for analyzing a test.  
It is particularly useful when the corresponding test statistic is asymptotically normal both under the null and alternative hypotheses.
%(Note that we were not able to use this result to analyze the $t$-test.)

\begin{lem} \label{lem:basic}
Consider a test that rejects for large values of a statistic $T_n$ with finite second moment, both under the null and alternative hypotheses.  Then the test that rejects when $T_n \ge t_n := \E_0(T_n) + \frac{a_n}2 \sqrt{\Var_0(T_n)}$ is asymptotically powerful if
\beq \label{basic1}
a_n := \frac{\E_1(T_n) - \E_0(T_n)}{\sqrt{\Var_1(T_n) \vee \Var_0(T_n)}} \to \infty.
\eeq 
Assume in addition that $T_n$ is asymptotically normal, both under the null and alternative hypotheses.  Then the test is asymptotically powerless if  
\beq \label{basic2}
\frac{\E_1(T_n) - \E_0(T_n)}{\sqrt{\Var_0(T_n)}} \to 0 \quad \text{ and } \quad \frac{\Var_1(T_n)}{\Var_0(T_n)} \to 1.
\eeq 
\end{lem}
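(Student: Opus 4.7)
The plan is to dispatch the two claims by essentially independent moment arguments: Chebyshev's inequality for the powerful direction, and null-standardization plus Slutsky's theorem for the powerless direction.

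For the powerful direction, I would observe that under $H_0^n$ the rejection event $\{T_n \ge t_n\}$ equals $\{T_n - \E_0(T_n) \ge (a_n/2)\sqrt{\Var_0(T_n)}\}$, so Chebyshev's inequality yields $\P_0(T_n \ge t_n) \le 4/a_n^2 \to 0$ under \eqref{basic1}. Under $H_1^n$, the key algebraic inequality is
\begin{equation*}
\E_1(T_n) - t_n \,=\, a_n \sqrt{\Var_0(T_n) \vee \Var_1(T_n)} - \tfrac{a_n}{2}\sqrt{\Var_0(T_n)} \,\ge\, \tfrac{a_n}{2}\sqrt{\Var_1(T_n)},
\end{equation*}
after which a second application of Chebyshev gives $\P_1(T_n < t_n) \le 4/a_n^2 \to 0$. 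Since both type~I and type~II errors vanish, the total variation distance between the laws of $T_n$ under $H_0^n$ and $H_1^n$ tends to~$2$.

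For the powerless direction, I would introduce the null-standardized statistic $U_n := (T_n - \E_0(T_n))/\sqrt{\Var_0(T_n)}$. Because the affine map $t \mapsto (t - \E_0(T_n))/\sqrt{\Var_0(T_n)}$ does not depend on the hypothesis, the TV distance between the laws of $T_n$ coincides with that between the laws of $U_n$. Under $H_0^n$, asymptotic normality gives $U_n \rightharpoonup \cN(0,1)$. Under $H_1^n$, I would decompose
\begin{equation*}
U_n \,=\, \sqrt{\Var_1(T_n)/\Var_0(T_n)}\, V_n \,+\, (\E_1(T_n) - \E_0(T_n))/\sqrt{\Var_0(T_n)},
\end{equation*}
with $V_n := (T_n - \E_1(T_n))/\sqrt{\Var_1(T_n)} \rightharpoonup \cN(0,1)$ by asymptotic normality under the alternative. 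The two conditions in \eqref{basic2} say precisely that the scaling factor tends to $1$ and the additive shift tends to $0$, so Slutsky's theorem gives $U_n \rightharpoonup \cN(0,1)$ under $H_1^n$ as well.

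The hard part will be passing from this shared weak limit to vanishing TV distance between the two laws of $U_n$ (equivalently, of $T_n$): weak convergence to a common continuous limit does not in general imply TV convergence. In every use of this lemma in the paper, however, the CLT supplying the asymptotic normality also yields uniform CDF convergence (Berry--Esseen type) or density convergence (Scheff\'e), so both laws converge to $\cN(0,1)$ in TV and the triangle inequality closes the argument. Under the narrower reading in which ``the test is asymptotically powerless'' only asserts that the specific threshold test of the first part has matching rejection probabilities under $H_0^n$ and $H_1^n$, the weak convergence above already suffices: \eqref{basic2} forces $a_n \to 0$, and a short computation using the algebraic identity above shows $(t_n - \E_1(T_n))/\sqrt{\Var_1(T_n)} \to 0$, so both rejection probabilities converge to $1/2$.
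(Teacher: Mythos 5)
Your proof is correct and follows essentially the same route as the paper's: Chebyshev's inequality on both sides of the threshold $t_n$ for the powerful direction, and null-standardization plus Slutsky for the powerless direction. Your closing caveat—that a shared weak limit does not by itself give vanishing total variation—is a fair point that the paper's own proof also glosses over (it too stops at the common $\cN(0,1)$ limit), though for one-sided threshold tests based on $T_n$ P\'olya's theorem (weak convergence to a continuous limit implies uniform convergence of the distribution functions) already closes the gap without invoking Berry--Esseen or Scheff\'e.
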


\begin{proof}
Assume that $n$ is large enough that $a_n \ge 1$.  
%Consider the test that rejects when $T_n \ge t_n := \E_0(T_n) + a_n \sqrt{\Var_0(T_n)}$.  
By Chebyshev's inequality, the test has a level tending to zero, that is, $\P_0(T_n \ge t_n) \to 0$.  Now assume we are under the alternative.  Since
\beqn
t_n &=& \E_1(T_n) - \big(\E_1(T_n) - \E_0(T_n) - \frac{a_n}2 \sqrt{\Var_0(T_n)} \big) \\
&\le& \E_1(T_n) - \frac12 \big(\E_1(T_n) - \E_0(T_n)\big) \\
&\le& \E_1(T_n) - \frac{a_n}2 \sqrt{\Var_1(T_n)}, 
\eeqn
by Chebyshev's inequality, we see that $\P_1(T_n \ge t_n) \to 1$.  Hence, this test is asymptotically powerful.

For the second part, we have 
\[
\frac{T_n - \E_0(T_n)}{\sqrt{\Var_0(T_n)}} \rightharpoonup \cN(0,1), \text{ under the null},
\]
while
\beqn
\frac{T_n - \E_0(T_n)}{\sqrt{\Var_0(T_n)}} 
&=& \sqrt{\frac{\Var_1(T_n)}{\Var_0(T_n)}} \frac{T_n - \E_1(T_n)}{\sqrt{\Var_1(T_n)}} + \frac{\E_1(T_n) - \E_0(T_n)}{\sqrt{\Var_0(T_n)}} \\
&\rightharpoonup& \cN(0,1), \text{ under the alternative},
\eeqn
by Slutsky's theorem, since $\frac{T_n - \E_1(T_n)}{\sqrt{\Var_1(T_n)}} \rightharpoonup \cN(0,1)$ and \eqref{basic2} holds.  Hence, $\frac{T_n - \E_0(T_n)}{\sqrt{\Var_0(T_n)}}$ has the same asymptotic distribution under the two hypotheses, and consequently, is powerless at separating them.  This immediately implies that any test based on $T_n$ is asymptotically powerless.
\end{proof}

\subsection{Proof of \prpref{$t$-test}}
%Let $\sigma_{\mbox{\tiny F}}^2$ and $\sigma_{\mbox{\tiny G}}^2$ be the second and fourth moment of $F$ and $G$ respectively.  (Note that the first and third moments are equal to zero.)  Also, 
%Here, we are not able to apply \lemref{basic} directly, although the approach is similar.
Redefining $\mu_n$ as $\mu_n + {\rm mean}(G)$, we may assume that ${\rm mean}(G) = 0$ without loss of generality.
Define the sample mean and sample variance
\[\bar{X} = \frac1n \sum_{i=1}^n X_i, \qquad S^2 = \frac1{n} \sum_{i=1}^n (X_i -\bar{X})^2,\]
so that $T = \sqrt{n} \bar{X}/S$.
Under the null, $X_1, \dots, X_n$ are i.i.d.~with distribution $F$, which has finite second moment by assumption.
Hence, the central limit theorem applies and $\sqrt{n} \bar{X}$ is asymptotically normal with mean 0 and variance $\Var(F)$.
Also, 
\[S^2 = \frac1n \sum_{i=1}^n X_i^2 - \bar{X}^2 \to_P \Var(F),\]
by the law of large numbers.
Hence, by Slutsky's theorem, $T$ is asymptotically standard normal under the null.  

We now look at the behavior of $T$ under the alternative.  
The $X_i$'s are still i.i.d., with $\E_1 X_i = \eps_n \mu_n$ and 
\beq \label{varX}
\Var_1(X_i) = \eps_n (1-\eps_n) \mu_n^2 + (1-\eps_n) \Var(F) + \eps_n \Var(G) \asymp \eps_n \mu_n^2 + 1.
\eeq
Hence, by Chebyshev's inequality, 
\[\sqrt{n} \bar{X} = \sqrt{n} \eps_n \mu_n + O_P(\sqrt{\eps_n \mu_n^2 + 1}) \asymp_P \sqrt{n} \eps_n \mu_n + 1,\]
using the fact that $n \eps_n \to \infty$.
%\[\P\big(|\bar{X} - \eps_n \mu_n| > \frac{\omega_n}{\sqrt{n}} \sqrt{\eps_n \mu_n^2 + 1} \big) \to 0, \qquad \text{when } \omega_n \to \infty.\]
Let $Z_i = X_i - \eps_n \mu_n$, which are i.i.d.~with $\E Z_i = 0$ and $\E Z_i^2 = \Var_1(X_i)$.
For $k = 3 \text{ or } 4$, we have 
\[\E_1 |X_i|^k = (1-\eps_n) \E_F |X_i|^k + \eps_n \E_G |X_i+\mu_n|^k \asymp 1 + \eps_n \mu_n^k.\]
From this, it easily follows that $\E Z_i^4 \asymp 1 + \eps_n \mu_n^4$.
Since $S^2 = \frac1n \sum_i Z_i^2 - n\bar{Z}^2$, we have $\E_1 S^2 = (1-1/n) \E Z_i^2$ and 
\[\E_1 S^4 \le \frac1{n^2} \E ( \sum_i Z_i^2 )^2 \le \frac1n \E Z_i^4 + \frac{n-1}n (\E Z_i^2)^2,\]
so that
\beq \label{varS2}
\Var_1(S^2) \le \frac1n \E Z_i^4 + \frac{n-1}{n^2} (\E Z_i^2)^2 \asymp \frac1n \big(1 + \eps_n\mu_n^4 + (1 + \eps_n \mu_n^2)^2\big) \asymp \frac1n \big(1 + \eps_n\mu_n^4\big).
\eeq
Hence, by Chebyshev's inequality, 
\begin{align}
S^2 
&= (1-1/n) \Var_1(X_i) + O_P(\sqrt{\Var_1(S^2)}) \label{S2} \\
&\asymp 1 + \eps_n \mu_n^2 + O_P(\sqrt{\frac1n \big(1 + \eps_n\mu_n^4\big)}) \notag \\
&\asymp_P 1 + \eps_n \mu_n^2,\notag
\end{align}
using the fact that $n \eps_n \to \infty$.
Consequently, 
\[T \asymp_P r_n := \frac{\sqrt{n} \eps_n \mu_n + 1}{\sqrt{\eps_n \mu_n^2 + 1}}.\]
When $\sqrt{n} \eps_n \mu_n \to \infty$, we have 
\[r_n \asymp \frac{\sqrt{n} \eps_n \mu_n}{\sqrt{\eps_n \mu_n^2 \vee 1}} = \sqrt{n \eps_n} \wedge \sqrt{n} \eps_n \mu_n  \to \infty,\]
so that the test $\{T > \sqrt{r_n}\}$ has vanishing risk.
 
The same arguments show that $T$ remains bounded when $\sqrt{n} \eps_n \mu_n$ is bounded --- implying that $r_n$ is bounded --- in which case the $t$-test is not powerful.  
To prove that the $t$-test is actually powerless when $\sqrt{n} \eps_n \mu_n \to 0$ requires showing that $T$ is also asymptotically standard normal in this case.  
By the fact that $n \eps_n \to \infty$ and $\sqrt{n} \eps_n \mu_n \to 0$, we have $\eps_n \mu_n^2 \ll n \eps_n^2 \mu_n^2 \to 0$ and also $\eps_n \mu_n^4 \ll n (\eps_n \mu_n^2)^2 \ll n$.
Hence, from \eqref{varX} we get $\Var_1(X_i) \sim \Var(F)$, and from \eqref{varS2} we get $\Var_1(S^2) = o(1)$.
Therefore, on the one hand, $S^2 \to_P \Var(F)$ by \eqref{S2}.
On the other hand, Lyapunov's conditions are satisfied for $Z_i' := Z_i/\sqrt{n}$, since they are i.i.d.~with $n \E (Z_i')^2 = \E Z_i^2 = \Var_1(X_i) \to \Var(F)$ and $n \E (Z_i')^4 \asymp n (1 + \eps_n \mu_n^4)/n^2 \to 0$.
Hence, 
\[\sqrt{n} \bar{X} = \sum_{i=1}^n Z_i' + \sqrt{n} \eps_n \mu_n = \sum_{i=1}^n Z_i' + o(1),\]
is asymptotically normal with mean 0 and variance $\Var(F)$.
We conclude that $T$ is also asymptotically standard normal under the alternative when $\sqrt{n} \eps_n \mu_n \to 0$.

\subsection{Proof of \prpref{sign}}

The proof is a simple application of \lemref{basic}. 
We work with $S^+ := \sum_i \IND{\xi_{(i)} = 1}$, which is equivalent since $S = 2S^+ - n$.
Note that
\beq \label{bn}
S^+ \sim \Bin(n, 1/2 + b_n), \quad b_n := \eps_n(1/2-G(-\mu_n)).
\eeq
We have
\[
\E_0(S^+) = n/2, \quad \E_1(S^+) = (1/2 + b_n) n,
\]
and
\[
\Var_0(S^+) = n/4, \quad \Var_1(S^+) = (1/4 - b_n^2)n \leq n/4.
\]
Hence
\[
\frac{\E_1(S^+) - \E_0(S^+)}{\sqrt{\Var_0(S^+) \vee \Var_1(S^+)}} = \frac{n b_n}{\sqrt{n}/2} \to \infty,
\]
provided $\sqrt{n} b_n \to \infty$. 
By \lemref{basic}, this proves that the sign test is asymptotically powerful.

To prove that the test is powerless when the limit in \eqref{prp_sign} is 0, we first show that $S^+$ is asymptotically standard normal both under the null and under the alternative. 
The very classical normal approximation to the binomial says that
\[
\frac{S^+ - n/2}{\sqrt{n}/2} \rightharpoonup \cN(0, 1), \quad \text{under the null}.
\]
Under the alternative, we apply Lyapunov's CLT.  
The condition are easily verified: $\Var_1(S^+) = n (1/4 - b_n^2) \sim n/4$ since $b_n = o(1)$, and the variables we are summing --- here $\IND{X_i > 0}$ --- are bounded.  
Hence, it follows that $(S^+ - \E_1(S^+))/\sqrt{\Var_1(S^+)}$ is asymptotically standard normal. 
And it is easy to see that condition \eqref{basic2} holds when $\sqrt{n} b_n \to 0$.
%\[
%\frac{\E_1(S^+) - \E_0(S^+)}{\sqrt{\Var_0(S^+)}} = \frac{(1/2 + b_n) n - n/2}{\sqrt{n/4}} \asymp \sqrt{n} b_n \to 0,\]
%and
%\[
%\frac{\Var_1(S^+)}{\Var_0(S^+)} = \frac{(1/4 - b_n^2)n}{n/4} = 1 - 4 b_n^2 \to 1.
%\]
By \lemref{basic}, this proves that the sign test is asymptotically powerless.

\subsection{Proof of \prpref{wilcox}}

The proof is based on \lemref{basic}.
We work with $W^+ := \sum_i (n-i+1) \IND{\xi_{(i)} = 1}$, which is equivalent since $W = 2 W^+ - n(n+1)/2$.
The first and second moments of $W^+$ are known in closed form and, when the distribution of the variables is fixed, it is known to be asymptotically normal \citep{MR758442}.  
For completeness, and also because the distribution under the alternative changes with the sample size, we detail the proof, although no new argument is needed.

The crucial step is to represent $W^+$ as a U-statistic:
\[W^+ = \tilde W +  W^{\ddag}, \qquad \tilde W := \sum_{i=1}^n \IND{X_i > 0}, \quad W^\ddag := \sum_{1 \le i < j \le n} \IND{X_i+X_j > 0}.\]
This facilitates the computation of moments, and also the derivation of the asymptotic normality.  

Define $p_1 = \P_1(X_1 > 0)$, $p_2 = \P_1(X_1 + X_2 > 0)$, $p_3 = \P_1(X_1 > 0, X_1 + X_3 >0)$ and $p_4 = \P_1(X_1 + X_2 >0, X_1 + X_3 >0)$.
We have
\[
\E_1 W^+ = n p_1 + \frac{n(n-1)}2 p_2,
\]
and
\begin{align*}
\Var_1 W^+ &= n p_1 (1-p_1) + n(n-1)(n-2) (p_4 - p_2^2) \\
&\quad + \frac{n(n-1)}2 \big[p_2(1-p_2) + 4 (p_3 - p_1p_2)\big].
\end{align*}
Under the alternative, the $X_i$'s are i.i.d.~with distribution $Q_n(x) := (1-\eps_n)F(x) + \eps_n G(x-\mu_n)$ and density $q_n(x) := (1-\eps_n)f(x) + \eps_n g(x-\mu_n)$. 
We get
\[p_1 = \bar{Q}_n(0) = (1-\eps_n) \frac12 + \eps_n G(-\mu_n),\]
and
\begin{align*}
p_2 
&= \int \bar{Q}_n(-x) q_n(x) {\rm d}x \\
&= (1-\eps_n)^2 \int \bar{F}(-x) f(x) {\rm d}x + \eps_n^2 \int \bar{G}(-x -\mu_n) g(x-\mu_n){\rm d}x \\
& \qquad + \eps_n (1-\eps_n) \int \big(\bar{F}(-x) g(x-\mu_n) + \bar{G}(-x-\mu_n) f(x) \big) {\rm d}x \\
&= \frac12 + \frac12 \eps_n (1-\eps_n) \zeta_n + \frac12 \eps_n^2 \lambda_n,
\end{align*}
using the fact that $f$ is even and the following identities: 
\[
\int_{-\infty}^\infty \bar{F}(-x) f(x) {\rm d}x = \int_{-\infty}^\infty F(x) f(x) {\rm d}x = \frac12 F(x)^2\big|_{-\infty}^\infty = 1/2 
\]
and
\[
\int_{-\infty}^\infty \big[g(x-\mu_n) F(x) + G(x-\mu_n) f(x) \big] {\rm d}x = F(x) G(x-\mu_n) \big|_{-\infty}^\infty = 1.
\]
Similarly, we compute
\begin{align*}
p_3
= \int_0^\infty \bar{Q}_n(-x) q_n(x) {\rm d}x  
&= (1-\eps_n)^2 \int \bar{F}(-x) f(x) {\rm d}x + O(\eps_n) \\
&= \frac12 \bar{F}(-x)^2\big|_{-\infty}^\infty + O(\eps_n) = \frac38 + O(\eps_n),
\end{align*}
and
\begin{align*}
p_4 
= \int \bar{Q}_n(-x)^2 q_n(x) {\rm d}x  
&= (1-\eps_n)^3 \int \bar{F}(-x)^2 f(x) {\rm d}x + O(\eps_n) \\
&= \frac13 \bar{F}(-x)^3\big|_{-\infty}^\infty + O(\eps_n) = \frac13 + O(\eps_n).
\end{align*}
Substituting the parameters in the formulas, we obtain
\beq \label{W+mean}
\E_1 W^+ = \frac{n(n+1)}2 - n \eps_n b_n + \frac{n(n-1)}4 \big(\eps_n (1-\eps_n) \zeta_n + \eps_n^2 \lambda_n\big),
\eeq
with $b_n := 1/2 - G(-\mu_n)$, and
\beq \label{W+var}
\Var_1 W^+ = \frac{n^3}{12} + O(n^3 \eps_n).
\eeq
In particular, 
\[
\frac{\E_1 W^+ - \E_0 W^+}{\sqrt{\Var_0 W^+ \vee \Var_1 W^+}} = \sqrt{3/4} \sqrt{n} \eps_n \big((1-\eps_n) \zeta_n + \eps_n \lambda_n\big) + o(1),
\]
and $\Var_1 W^+/\Var_0 W^+ \to 1$.

Therefore, when $\sqrt{n} \eps_n (\zeta_n \vee \eps_n \lambda_n) \to \infty$, the test that rejects for large values of $W^+$ is asymptotically powerful by \lemref{basic}.  
We also note that \eqref{basic2} is satisfied when $\sqrt{n} \eps_n (\zeta_n \vee \eps_n \lambda_n) \to 0$, so it remains to show that $W^+$ is asymptotically normal.
(It is well-known that $W^+$ is asymptotically normal under the null.)  
We follow the footsteps of \cite{MR758442}.
We quickly note that $\tilde W \le n$, which is negligible compared to the standard deviation of $W^+$, which is of order $n^{3/2}$.
So it suffices to show that $W^\ddag$ is asymptotically normal.  
Its Ha\'jek projector is
\[
W^\star := \sum_{i=1}^n \E_1(W^\ddag| X_i) - (n-1) \E_1 W^\ddag,
\]
and satisfies $\E_1 W^\star = \E_1 W^\ddag$ and $\Var_1 (W^\ddag - W^\star) = \Var_1 W^\ddag - \Var_1 W^\star$.
It is easy to see that
\[
W^\star = (n-1) \sum_{i=1}^n \bar{Q}_n(-X_i) + {\rm constant},
\]
so that
\[
\Var_1 W^\star = (n-1)^2 n \Var_1 \bar{Q}_n(-X_i)  = (n-1)^2 n (p_4 - p_2^2) = \frac{n^3}{12} + O(n^3 \eps_n). 
\]
Hence, since the variables $\bar{Q}_n(-X_i)$ are bounded, Lyapunov conditions are satisfied and $W^\star$ is asymptotically normal.
Coming back to $W^\ddag$, we have $\Var_1 W^\star/\Var_1 W^\ddag \to 1$, so that
\[
\E_1 \left( \frac{W^\ddag -\E_1 W^\ddag}{\sqrt{\Var_1 W^\ddag}} - \frac{W^\star -\E_1 W^\star}{\sqrt{\Var_1 W^\ddag}} \right)^2 = 1 - \frac{\Var_1 W^\star}{\Var_1 W^\ddag} \to 0,
\]
and therefore
\[\frac{W^\ddag -\E_1 W^\ddag}{\sqrt{\Var_1 W^\ddag}} \sim_P \frac{W^\star -\E_1 W^\star}{\sqrt{\Var_1 W^\ddag}} \sim_P \frac{W^\star -\E_1 W^\star}{\sqrt{\Var_1 W^\star}} \rightharpoonup \cN(0,1).\]
We conclude that $W^\star$ is asymptotically normal also.

\subsection{Proof of \prpref{smirnov}}

We work with the form \eqref{smir}, meaning we consider the test that rejects for large values of $D_{F_n}^*$, where for a distribution function $H$, $D_H^* := \sup_{x > 0} D_{H}(x)$ with $D_H(x) := \bar{H}(x) - H(-x)$.

We already know that $\sqrt{n} D_{F_n}^* \rightharpoonup |\cN(0,1)|$ under the null hypothesis.  
  
%We have
%\begin{align}
%\bar{F}_n(x) - F_n(-x)
%& = \bar{F}_n(x) - (1-\eps_n) \bar{F}(x) - \eps_n \bar{G}(x-\mu_n) \\
%& \quad - \big[F_n(-x) - (1-\eps_n) F(-x) - \eps_n G(-x-\mu_n)\big] \\
%& \qquad + \eps_n\big[\bar{G}(x-\mu_n) - G(-x-\mu_n)\big],
%\end{align}
%where we used the fact that $F$ is symmetric.

Define $I_0$ and $I_1$ as in the proof of \prpref{tail}.  Let $N_j = |I_j|$ and $F_n^j(x) = \frac1{N_j} \sum_{i\in I_j} \IND{X_i \le x}$ for $j = 0,1$.
We have $F_n(x) = \frac{N_0}n F_n^0(x) + \frac{N_1}n F_n^1(x)$, so that
\[
\bar{F}_n(x) - F_n(-x) = \frac{N_0}n D_{F_n^0}(x) + \frac{N_1}n D_{F_n^1}(x).
\]
By the triangle inequality,
\beq \label{smirnov-proof1}
\sqrt{n} D_{F_n}^* \ge \sqrt{N_1/n} \cdot \sqrt{N_1} D_{F_n^1}^* - \big|\sqrt{N_0} D_{F_n^0}^* \big|,
\eeq
and also
\beq \label{smirnov-proof2}
\big| \sqrt{n} D_{F_n}^* - \sqrt{N_0} D_{F_n^0}^* \big| \le \big|\sqrt{N_0/n} - 1 \big|  \big|\sqrt{N_0} D_{F_n^0}^*\big| + \sqrt{N_1/n} \big|\sqrt{N_1} D_{F_n^1}^*\big|.
\eeq

For the null effects in the sample, because $F$ is symmetric and $N_0 \sim_P n \to \infty$, we have $\sqrt{N_0} D_{F_n^0}^* \rightharpoonup |\cN(0,1)|$.
For the true positive effects in the sample, by the triangle inequality,
\begin{align} 
& \big|\sqrt{N_1} D_{F_n^1}^* - \sqrt{N_1} \sup_{x>0} [\bar{G}(x-\mu_n) - G(-x-\mu_n)] \big| \label{smirnov-proof3} \\
&\qquad \le 2 \sqrt{N_1} \sup_{x \in \bbR} |F_n^1(x) - G(x-\mu_n)| = O_P(1). \notag
\end{align}
To see why the term on the RHS is bounded, we note that, given $N_1 = m$, 
\[\sqrt{N_1} \sup_{x \in \bbR} |F_n^1(x) - G(x-\mu_n)| \sim \sqrt{m} \sup_{x \in \bbR} |G_m(x) - G(x)| \rightharpoonup \Gamma,
\]
where $G_m$ denotes the empirical distribution function of an i.i.d.~sample of size $m$ drawn from $G$ and $\Gamma$ denotes the maximum absolute value of a Brownian bridge over $[0,1]$.  ($\sim$ here means ``distributed as".) 
Since $N_1 \sim_P n \eps_n \to \infty$, we infer that the same weak convergence holds unconditionally.

We now prove that the test is asymptotically powerful when the limit in \eqref{prp_smirnov} is infinite.
Under the alternative, by \eqref{smirnov-proof3} plugged into \eqref{smirnov-proof1}, we get
\[
\sqrt{n} D_{F_n}^* \ge \frac{N_1}{\sqrt{n}} \sup_{x>0} [\bar{G}(x-\mu_n) - G(-x-\mu_n)] + O_P(1) \to \infty,
\] 
where the divergence to $\infty$ is due to \eqref{prp_smirnov} diverging and the fact that $N_1 \sim_P n \eps_n$.  
Since $\sqrt{n} D_{F_n}^* = O_P(1)$ under the null, we conclude that the test is indeed powerful.

Next, we show that the test is asymptotically powerless when the limit in \eqref{prp_smirnov} is zero.
By \eqref{smirnov-proof3} plugged into \eqref{smirnov-proof1}, we get
\begin{align*}
\big| \sqrt{n} D_{F_n}^* - \sqrt{N_0} D_{F_n^0}^* \big| 
& \le \big|\sqrt{N_0/n} - 1 \big| \,  O_P(1) \\
& \quad + \frac{N_1}{\sqrt{n}} \big|\sup_{x>0} [\bar{G}(x-\mu_n) - G(-x-\mu_n)] \big| \\ 
& \quad \quad + O_P(\sqrt{N_1/n})  = o_P(1),
\end{align*}
using the fact that $N_0 \sim_P n$ and $N_1 \sim_P n \eps_n$, combined with \eqref{prp_smirnov} converging to zero.
Hence, $\sqrt{n} D_{F_n}^* \sim \sqrt{N_0} D_{F_n^0}^* \rightharpoonup |\cN(0,1)|$ under the alternative, which is the same limiting distribution as under the null.  
We conclude that the test is asymptotically powerless.

\subsection{Proof of \prpref{runs}}
\label{sec:runs-proof}
We note that our proof relies on different arguments than those of \cite{MR926417}, which are based on the classical work of \cite{MR0002083}.  
Instead, we use a Central Limit Theorem for $m$-dependent processes due to \cite{MR0350815}.
We also mention \cite{MR886465}, who tests whether independent Bernoulli random variables have the same parameter, or not. 

%Let $h_n^\pm(y) = g(\pm y - \mu_n)/f(y)$ and 
For $y \ge 0$, define
\[
p(y) = \frac{(1-\eps_n) f(y) + \eps_n g(y -\mu_n)}{2 (1-\eps_n) f(y) + \eps_n [g(y -\mu_n) + g(-y -\mu_n)]}.
\]
Let $Y_i = |X_i|$ and $\bY_n = (Y_1, \dots, Y_n)$.  Note that in the denominator in $p(y)$ is the density of $Y_1$ in model \eqref{h1}.  
Given $Y_1 = y_1, \dots, Y_n  = y_n$, the signs $\xi_{(1)}, \dots, \xi_{(n)}$ are independent Bernoulli with parameters $p_1, \dots, p_n$, where $p_i := p(y_{(i)})$ and $y_{(1)} \ge \cdots \ge y_{(n)}$ are the ordered $y$'s.  We mention that the $\xi$'s are generally {\em not} unconditionally independent.

To prove powerlessness, we use the fact that $R$ is asymptotically normal under both hypotheses and then apply \lemref{basic}.  Under the null, we saw that $R \sim \Bin(n-1, 1/2)$, and asymptotic normality comes from the classical CLT.  

Let 
\[
I_n = \int_0^\infty \frac{\eps_n^2 [g(y -\mu_n) - g(-y -\mu_n)]^2}{2 (1-\eps_n) f(y) + \eps_n [g(y -\mu_n) + g(-y -\mu_n)]} {\rm d}y.
\]
Noting that $I_n \le \eps_n (\zeta_n \wedge \eps_n \lambda_n)$ when $n$ is large enough that $\eps_n \le 1/2$, we have $\sqrt{n} I_n \to 0$ by \eqref{prp_runs}.  
It therefore exists $\omega_n \to 0$ such that $\omega_n \ge 4 I_n \sqrt{n} \vee \frac{\log n}{\sqrt{n}}$.  Define $\cY_n$ as the set of $\by_n = (y_1, \dots, y_n)$, such that
\beq \label{cY}
\sum_{i=1}^n \big(p(y_i) - \frac12\big)^2 \le \omega_n \sqrt{n}.
\eeq
Note that $I_n = \E \big[4(p(Y) - 1/2)^2\big]$, where $Y = |X|$ and $X$ is drawn from the mixture model \eqref{h1}.  Also, $4 (p(y) - 1/2)^2 \le 1$ for all $y$.  Hence, letting $A_i = 4 (p(Y_i) - 1/2)^2$, we have
\beqn
\pr{\sum_{i=1}^n A_i > 4 \omega_n \sqrt{n}}
&\le& \pr{\sum_{i=1}^n A_i - n I_n > 3 \omega_n \sqrt{n}} \\
&\le& \exp\left(- \frac{\frac92 n \omega_n^2}{4 n I_n + \frac13 (3 \omega_n \sqrt{n})} \right) \\
&\le& \exp\left(- \frac94 \omega_n \sqrt{n} \right), 
\eeqn
using the fact that $\omega_n \sqrt{n} \ge 4 n I_n$ in the first and third inequalities, and the second is Bernstein's inequality together with the fact that $\Var A_i \le \E A_i$, since $0 \le A_i \le 1$.  Hence, using the fact that $\omega_n \sqrt{n} \ge \log n$, we conclude that
\[
\P(\bY_n \notin \cY_n) \le n^{-9/4}.
\]
So it suffices to work given $\bY_n = \by_n \in \cY_n$.  Let $\P_{\by_n}$ denote the distribution of $\xi_{(1)}, \dots, \xi_{(n)}$ under model \eqref{h1} given $\bY_n = \by_n$, where $\by_n \in \cY_n$.  

Let $W_k = \{\xi_{(k)} \ne \xi_{(k-1)}\}$, so that $R = W_2 + \cdots + W_n$.  Note that $(W_k)$ forms an $m$-dependent process with $m=2$.  We apply the CLT of \cite{MR0350815} to that process.  We have $W_k \in \{0,1\}$.  Then, due to the fact that given $\bY_n=\by_n$ the $\xi$'s are independent, for $2 \le i < j \le n$ we have 
\beqn
\Var_{\by_n}(W_i + \cdots + W_j) 
&=& \sum_{k=i}^j \Var_{\by_n}(W_k) + 2 \sum_{k=i+1}^j \Cov_{\by_n}(W_k, W_{k-1}) \\
&=& \sum_{k=i}^j q_k (1-q_k) + 2 \sum_{k=i+1}^j \big(q_k^{(2)} - q_k q_{k-1}\big),
\eeqn
where 
\[
q_k := \P_{\by_n}(\xi_{(k)} \ne \xi_{(k-1)}) = p_k (1-p_{k-1}) + (1-p_{k}) p_{k-1},\] and
\beqn
q_k^{(2)} &:=& \P_{\by_n}(\xi_{(k)} \ne \xi_{(k-1)}, \xi_{(k-2)} \ne \xi_{(k-1)}) \\
&=& p_k (1-p_{k-1}) p_{k-2} + (1-p_k) p_{k-1} (1-p_{k-2}).
\eeqn
Put $a_k = p_k - 1/2$ and note that $|a_k| \le 1/2$.  We have
\[\sum_{k=i}^j q_k (1-q_k) = \frac{j-i+1}4 - 4 \sum_{k=i}^j (a_k a_{k-1})^2,\]
and
\[\sum_{k=i+1}^j \big(q_k^{(2)} - q_k q_{k-1}\big) = \sum_{k=i+1}^j a_k a_{k-2} (1 - 4 a_{k-1}^2),\]
so that
\[
\Var_{\by_n}(W_i + \cdots + W_j) \le \frac{j-i+1}4 + 2 \, \frac{j-i}4 \le j-i.
\]
We also have 
\beqn
&& \left|\frac1n \Var_{\by_n}(W_2 + \cdots + W_n) - \frac14\right| \\
&\le& \frac1{4n} + \frac{4}n \sum_{k=2}^n (a_k a_{k-1})^2 + \frac1n \sum_{k=3}^n a_k a_{k-2} (1 - 4 a_{k-1}^2) \\
&\le& \frac1{4n} + \frac1n \sum_{k=2}^n |a_k a_{k-1}| + \frac1n \sum_{k=3}^n a_k a_{k-2} \\
&\le& \frac1{4n} + \frac2n \sum_{k=1}^n a_k^2 \\
&\le& \frac1{4n} + 2 \frac{\omega_n}{\sqrt{n}} \to 0,
\eeqn
using the identity $|ab| \le (a^2+b^2)/2$ and \eqref{cY} in the last inequality.  Hence,
\[\frac1n \Var_{\by_n}(W_2 + \cdots + W_n) \to \frac14.\]
Thus the CLT of \cite{MR0350815} applies to give that $R$ is also asymptotically normal under $\P_{\by_n}$, along any sequence $\by_n \in \cY_n$.   Moreover, we also have $\Var_{\by_n}(R)/\Var_0(R) \to 1$.  For the expectation, we have 
\beqn
\E_{\by_n}(R) - \E_0(R) 
&=& \sum_{k=2}^n q_k - \frac{n-1}2 \\
&=& \sum_{k=2}^n \big(\frac12 - 2 a_k a_{k-1}\big) - \frac{n-1}2 \\
&=& - 2 \sum_{k=2}^n a_k a_{k-1},
\eeqn
and since $\Var_0(R) = (n-1)/4$, we have 
\[
\frac{\left|\E_{\by_n}(R) - \E_0(R)\right|}{\sqrt{\Var_0(R)}} \le \frac4{\sqrt{n-1}} \sum_{k=1}^n a_k^2 \le 5 \omega_n \to 0.
\]
So by \lemref{basic}, the test that rejects for small values of $R$ is asymptotically powerless.

%\medskip
%{\em The generalized Gaussian model.}  Suppose that $F=G$ is generalized Gaussian with parameter $\gamma$.  Suppose that $\mu_n = O(1)$ and define $a_n = \mu_n \gamma^{-1/\gamma}$.  Then
%\begin{align*}
%& \int \frac{[g(x -\mu_n) - g(-x -\mu_n)]^2}{f(x)} {\rm d}x \\
%&= 2 \int_0^\infty e^{-2 |x -a_n|^\gamma + |x|^\gamma} \big[1 - e^{|x +a_n|^\gamma - |x -a_n|^\gamma} \big]^2 {\rm d}x \\
%&= 2 \int_0^\infty e^{-x^\gamma + O(a_n (x \vee a_n)^{\gamma-1})} \big[1 - e^{O(a_n (x \vee a_n)^{\gamma-1})} \big]^2 {\rm d}x \\
%&\le a_n^2 \int_0^\infty e^{-\frac12 x^\gamma} O(x \vee a_n)^{2\gamma-2} {\rm d}x \asymp a_n^{2 \wedge (2\gamma+1)},
%\end{align*}
%so that $\lambda_n = O(\eps_n \mu_n ^{2 \wedge (2\gamma+1)})$ as claimed.

\subsection{Proof of \prpref{longest}}

We keep the same notation as in the previous section, except we redefine $\cY_n$ as the set of $\by_n = (y_1, \dots, y_n)$ such that $\max_i y_i \le x_n$.  Equivalently, $\cY_n = [0,x_n]^n$.

We first prove that the test is asymptotically powerless under \eqref{prp_longest1}-\eqref{prp_longest2}.
First, by the union bound, 
\begin{align*}
\P(\bY_n\notin \cY_n) 
&= \P(\max_i Y_i > x_n) \\
&\le n \big[2 (1-\eps_n) \bar F(x_n) + \eps_n G(-x_n-\mu_n) + \eps_n \bar G(x_n -\mu_n)\big] \to 0,
\end{align*}
because of \eqref{prp_longest1}.
Therefore, we work given $\bY_n= \by_n \in \cY_n$ as before.

Let $p_n^* = \max\{p(y) : 0 \le y \le x_n\}$ and note that
\[
p_n^* \le \frac1{2 - \eta_n}, \quad \eta_n := \eps_n \max_{0 \le y \le x_n} \frac{(g(y -\mu_n) - g(-y -\mu_n))_+}{(1-\eps_n) f(y) + \eps_n g(y -\mu_n)}.
\]
%as soon as $\eta_n \le 1/2$, which happens eventually since $\eta_n \log n \to 0$ by \eqref{prp_longest2}.  
When $\by_n \in \cY_n$, we have that $p_i \le p_n^*$ for all $i$.  

Let $L_{n,p}$ denote the length of the longest-run in a sequence of $n$ i.i.d. Bernoulli random variables with parameter $p$.  Also, let $Z_p$ have the distribution $\P(Z_p \le z) = \exp(-p^z)$.  From \citep[Ex.~3]{MR972770}, we have the weak convergence
\[
L_{n,p} - \frac{\log(n(1-p))}{\log (1/p)} \rightharpoonup \lfloor Z_p + r \rfloor - r, 
\]
when $n \to \infty$ along a sequence such that $\log(n (1-p))/\log(1/p) \to r \mod 1$.  

Now, under the null, $L$ has the distribution of $L_{n,1/2}$.  Under $\P_{\by_n}$, $L$ is stochastically bounded by $L_{n,p_n^*}$.  In fact, $L_{n,p_n^*}$ is itself stochastically bounded by $L_{n,1/2}$ in the limit.  Indeed, on the one hand, we have
\begin{align*}
\frac{\log (n(1-p_n^*))}{\log (1/p_n^*)} - \frac{\log n}{\log 2} 
&\le \frac{(\log n) \log (2/(2-\eta_n))}{(\log 2) \log(2- \eta_n)} \\
&\sim \frac{(\log n) \eta_n/2}{(\log 2)^2} = O(\eta_n \log n) \to 0,
\end{align*}
by \eqref{prp_longest2}; on the other hand, $Z_{p_n^*}$ is stochastically bounded by $Z_{\frac12+\eta_n}$, which converges to $Z_{1/2}$ in distribution.  We therefore conclude that the test is asymptotically powerless.

We now prove the asymptotic powerfulness of the test under either \eqref{prp_longest3}, or \eqref{prp_longest4}-\eqref{prp_longest5}.  
In Case (i), we quickly note that \eqref{prp_longest3} is identical to \eqref{prp_tail1} except for the log factor in the rightmost condition, and the exact same arguments showing that the tail-run test is asymptotically powerful under \eqref{prp_tail1} imply that, under \eqref{prp_longest3}, $L^\ddag \gg \log n$, where $L^\ddag$ is the tail run defined in \eqref{tail}.  Hence, under the alternative, $L \ge L^\ddag \gg \log n$, compared to $L \sim L_{n,1/2} = O_P(\log n)$ under the null.
%The technical details are postponed to the proof of \prpref{tail}.

In Case $(ii)$, \eqref{prp_longest1} holds, so that we may work given $\bY_n= \by_n \in \cY_n$ as before, and the arguments are almost the same as when we proved powerlessness, but in reverse.
Let $I_n = [x'_n, x'_n+c]$.  
Redefine $p_n^* = \min \{p(y) : y \in I_n\}$ and note that
\[
p_n^* \ge \frac1{2 - \eta_n}, \quad \eta_n := \eps_n \min_{y \in I_n} \frac{g(y -\mu_n) - g(-y -\mu_n)}{(1-\eps_n) f(y) + \eps_n g(y -\mu_n)},
\]
as soon as $\eta_n > 0$.
In fact, by \eqref{prp_longest4}, $\eta_n \ge a > 0$ so that $p_n^* \ge \frac1{2 - a} > \frac12$.
  
We have that $L \ge L'$, where $L'$ is the longest-run of pluses among $\{\xi_{(i)}: y_{(i)} \in I_n\}$.
The number of $y_i$'s falling in $I_n$, denoted by $N_n$, is stochastically larger than $\Bin(n, q_n)$, where 
\begin{align*}
q_n 
&:= 2(1-\eps_n) \big(F(x_n'+c) - F(x_n')\big) \\
& \qquad + \eps_n \big(G(x'_n+c-\mu_n) - G(x'_n-\mu_n)\big) \\
&\ge c \min_{y \in I_n} \big( 2 f(y) + \eps_n g(y -\mu_n)\big) \ge c d n^{-b},
\end{align*}
where the last inequality holds eventually due to \eqref{prp_longest5}.
Therefore, with high probability as $n\to \infty$, $N_n \ge (cd/2) n^{1-b}$.
Given this is the case, $L'$ is stochastically bounded from below by $L_{(cd/2) n^{1-b},p_n^*}$, and we know that 
\begin{align*}
L_{cd n^{1-b},p_n^*} 
&\ge \frac{\log((cd/2) n^{1-b}(1-p_n^*))}{\log(1/p_n^*)} + O_P(1) \\
&\ge \frac{(1-b) \log n + \log((cd/2) (1-a)/(2-a))}{\log(2-a)} + O_P(1) \\
&= \frac{(1-b) \log n}{\log(2-a)} + O_P(1).
\end{align*}
We compare this with the size of $L$ under the null, which is $\frac{\log(n)}{\log(2)} + O_P(1)$:
\[
\frac{(1-b) \log n}{\log(2-a)} - \frac{\log n}{\log 2} = \left(\frac{1-b}{\log(2-a)} - \frac1{\log 2}\right) \log n \to \infty, 
\]
since the constant factor is positive by the upper bound on $b$.

\end{document}